\def\NZQ{\mathbb}               % the font for N,Z,Q,R,C
\def\NN{{\NZQ N}}
\def\QQ{{\NZQ Q}}
\def\ZZ{{\NZQ Z}}
\def\RR{{\NZQ R}}
\def\P{\mathcal P}
\newtheorem{Theorem}{Theorem}[section]
\newtheorem{Lemma}[Theorem]{Lemma}
\newtheorem{Corollary}[Theorem]{Corollary}
\newtheorem{Proposition}[Theorem]{Proposition}
\newtheorem{Remark}[Theorem]{Remark}
\newtheorem{Example}[Theorem]{Example}
\newtheorem{Definition}[Theorem]{Definition}
\let\epsilon\varepsilon
\let\phi=\varphi
\let\kappa=\varkappa
\begin{document}

\title{Valuation semigroups of two dimensional  local rings}
\author{Steven Dale Cutkosky and  Pham An Vinh}
\thanks{The first author was partially supported by NSF}

\address{Steven Dale Cutkosky, Department of Mathematics,
University of Missouri, Columbia, MO 65211, USA}
\email{cutkoskys@missouri.edu}

\address{Pham An Vinh, Department of Mathematics,
University of Missouri, Columbia, MO 65211, USA}
\email{vapnnc@mizzou.edu}

\begin{abstract} We consider the question of when a semigroup is the semigroup
of a valuation dominating a two dimensional noetherian domain, giving some
surprising examples. We give a necessary and sufficient condition for the pair of a semigroup $S$ and a field extension $L/\mathfrak k$ to be the semigroup and
residue field of a valuation dominating a regular local ring $R$ of dimension two with residue field $\mathfrak k$, generalizing the theorem of Spivakovsky for the case when there is no residue field extension.
\end{abstract}

\maketitle
\section{Introduction}
Suppose that $(R,\mathfrak m_R)$ is a Noetherian local ring which is dominated by a valuation $\nu$. The semigroup of
$\nu$ in $R$ is
$$
S^R(\nu)=\{\nu(f)\mid f\in R\setminus\{0\}\}.
$$
$S^R(\nu)$ generates the value group of $\nu$.

In this paper we give a classification of the semigroups and residue field extensions that may be obtained 
by a valuation dominating a  regular local ring of dimension two. Our results are completely general, as we make no further assumptions on the ring or on the residue field extension of the valuation ring. This classification (given in Theorems 
\ref{Theorem3*} and \ref{Corollary4*}) is very simple. 
The classification does not extend to more general rings. 

We give an example showing that the semigroup
of a valuation dominating a normal local ring of dimension two can be quite different from the semigroup of a regular local ring, even on an $A_2$ singularity (Example \ref{Example1}). In \cite{CT1}, \cite{CT2} and \cite{CDK}, we give  examples showing that the semigroups of valuations dominating regular local rings of dimension $\ge 3$ can be very complicated. 
For instance,
in Proposition 6.3 of \cite{CDK}, we show that  there exists a regular local ring $R$ of dimension 3 dominated by a rational rank 1 valuation $\nu$ which has the property that
given $\epsilon>0$, there exists an $i$ such that $\beta_{i+1}-\beta_i<\epsilon$,
where $\beta_0<\beta_1<\cdots$ is the minimal set of of generators of $S^R(\nu)$. In \cite{CT1} and \cite{CT2} we give examples showing that spectacularly strange behavior of the semigroup can occur for a higher rank valuation. The growth of valuation semigroups is however bounded by a polynomial whose coefficients are computed from the multiplicities of the centers of the 
composite valuations on $R$. This is proven in \cite{CT2}.

 The possible value groups $\Gamma$ of a valuation $\nu$ dominating a Noetherian local ring have been extensively studied and classified, including in the papers MacLane
\cite{M}, MacLane and Schilling \cite{MS}, Zariski and Samuel \cite{ZS}, and Kuhlmann \cite{K}. $\Gamma$ can be any ordered abelian group of finite rational rank
(Theorem 1.1 \cite{K}). The semigroup $S^R(\nu)$ is however not well understood, although it is known to encode important
information about the topology and  resolution of singularities of $\mbox{Spec}(R)$ \cite{B}, \cite{BK}, \cite{Z3}, \cite{Z4}, \cite{Ca}, \cite{CS}, \cite{EN}, \cite{Ka},  
\cite{GT}, \cite{Mi}, \cite{T}, \cite{G-P} to mention a few references, and the ideal theory of $R$ \cite{Z1}, \cite{Z2}, \cite{ZS} and its development in  many subsequent papers.

In Sections \ref{RLR1} through \ref{Poly} of this paper we analyze valuations dominating a regular local ring $R$ of dimension two.
 Our analysis is constructive, being based on
an algorithm which finds a generating sequence for the valuation. A generating sequence of $\nu$ in $R$ is a  
set of elements of $R$ whose initial forms are generators of  the graded $\mathfrak k=R/\mathfrak m_R$-algebra $\mbox{gr}_{\nu}(R)$ (Section \ref{Prel}).
The characteristic of the residue field of $R$
does not appear at all in the proofs, although the proof may be simplified significantly if the assumption 
that $R$ has equal characteristic is added; in this case we may reduce to the case where $R$ is a polynomial ring
over a field (Section \ref{Poly}).  A construction of a generating sequence, and the subsequent classification of the  semigroups, is classical in the case when the residue field of $R$ is algebraically closed; this was proven by Spivakovsky in \cite{S}. Besides the complete generality of our results, our proofs differ from those of Spivakovsky in that we
only use elementary techniques, using nothing more sophisticated than the definition of linear independence 
in a vector space, and the definition of the minimal polynomial of an element in a field extension. 
In our proof we construct the residue field of the valuation ring as a tower of primitive extensions; the
minimal polynomials  of the primitive elements are used to construct the generating sequence for the valuation. 
It is not necessary for $R$ to be excellent in our analysis; the only place in this paper where excellence
manifests itself is in the possibility of ramification in the extension of a valuation to the completion of a non excellent regular local ring (Proposition \ref{Prop17}).

In a finite field extension, the quotient of the valuation group of an extension of a valuation
by the value group is always a finite group (2nd corollary  on page 52 of \cite{ZS}).  This raises the following question: Suppose that $R\rightarrow T$ is a finite extension of regular local rings, and $\nu$ is a valuation which dominates $R$. Is $S^T(\nu)$ a finitely
generated module over the semigroup $S^R(\nu)$?  We give a counterexample to this question in Example \ref{Example3}.
This example is especially interesting in light of the results on relative finite generation in the papers \cite{GHK} of Ghezzi, H\`a and Kashcheyeva, and \cite{GK} of Ghezzi and Kashcheyeva.

We now turn to a discussion of our results on regular local rings of dimension two. 
We obtain the following necessary and sufficient condition for a semigroup and field extension to
be the semigroup and residue field extension of a valuation dominating a {\it complete} regular local
ring of dimension two in the following theorem (proven in Section \ref{Proof1}):

\begin{Theorem}\label{Theorem3*} Suppose that $R$ is a complete regular local ring of dimension two with residue field $R/\mathfrak m_R=\mathfrak k$.
Let $S$ be a subsemigroup of the positive elements of a totally ordered abelian group and $L$ be  a field extension of $\mathfrak k$. 
Then $S$ is the semigroup of a valuation $\nu$ dominating $R$ with residue field $V_{\nu}/\mathfrak m_{\nu}=L$ if and only if
there exists a finite or countable index set $I$, of cardinality $\Lambda=|I|-1\ge 1$ and elements $\beta_i\in S$ for $i\in I$ and $\alpha_i\in L$ for $i\in I_+$, where $I_+=\{i\in I\mid i>0\}$,  such that
\begin{enumerate}
\item[1)] The semigroup $S$ is generated by $\{\beta_i\}_{i\in I}$ and 
the field $L$ is generated  over $\mathfrak k$ by $\{\alpha_i\}_{i\in I_+}$.
\item[2)] Let
$$
\overline n_i=[G(\beta_0,\ldots,\beta_i):G(\beta_0,\ldots,\beta_{i-1})]
$$
and
$$
d_i=[\mathfrak k(\alpha_1,\ldots,\alpha_i):\mathfrak k(\alpha_1,\ldots, \alpha_{i-1})].
$$
Then there are inequalities
$$
\beta_{i+1}>\overline n_id_i\beta_i>\beta_i
$$
with $\overline n_i<\infty$ and $d_i<\infty$ for $1\le i<\Lambda$ and if $\Lambda<\infty$, then either $\overline n_{\Lambda}=\infty$ and $d_{\Lambda}=1$ or $\overline n_{\Lambda}<\infty$ and $d_{\Lambda}=\infty$.
\end{enumerate}
\end{Theorem}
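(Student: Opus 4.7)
The plan is to prove the two implications separately. For the necessity (\emph{only if}), I would invoke the generating sequence algorithm constructed in Sections \ref{RLR1}--\ref{Poly}: applied to any valuation $\nu$ dominating $R$, it produces a sequence $\{P_i\}_{i\in I}$ in $R$ (starting from regular parameters $P_0,P_1$) together with residues $\alpha_i \in V_\nu/\mathfrak m_\nu$, with $\beta_i := \nu(P_i)$ generating $S^R(\nu)$ and the $\alpha_i$ generating the residue field extension over $\mathfrak k$. Condition 2) is then essentially built into the construction: at step $i$, the index $\overline n_i$ lets one write $\overline n_i \beta_i \in G(\beta_0,\ldots,\beta_{i-1})$, and $P_{i+1}$ is defined as a perturbation of $P_i^{\overline n_i d_i}$ by a polynomial in $P_0,\ldots,P_{i-1}$ of the same $\nu$-value, with cancellation producing an element of strictly larger value, whence $\beta_{i+1} > \overline n_i d_i \beta_i$. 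The dichotomy when $\Lambda < \infty$ encodes whether the algorithm terminates because it has exhausted the value group ($\overline n_\Lambda = \infty$, $d_\Lambda = 1$) or the residue field extension ($\overline n_\Lambda < \infty$, $d_\Lambda = \infty$).

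For the sufficiency (\emph{if}) direction I would reverse-engineer the construction from the abstract data. Fix regular parameters $x,y \in R$, set $P_0 = x$, $P_1 = y$, and build $P_2, P_3, \ldots$ inductively. Given $P_0,\ldots,P_i$ and lifts to $R$ of $\alpha_1,\ldots,\alpha_{i-1}$, the relation $\overline n_i \beta_i = \sum_{j<i} c_{ij}\beta_j$ suggests the monomial $M_i = \prod_j P_j^{c_{ij}}$, which should carry value $\overline n_i \beta_i$ and hence make $P_i^{\overline n_i}/M_i$ a unit-valued quantity whose residue is to be $\alpha_i$. Using a lift to $R$ of the minimal polynomial $\phi_i$ of $\alpha_i$ over $\mathfrak k(\alpha_1,\ldots,\alpha_{i-1})$, one sets (after clearing denominators)
$$
P_{i+1} = M_i^{d_i}\,\phi_i\!\bigl(P_i^{\overline n_i}/M_i\bigr),
$$
so that $P_{i+1} \in R$ has leading term $P_i^{\overline n_i d_i}$ and prescribed higher value $\beta_{i+1}$. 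Completeness of $R$ is needed when $\Lambda = \infty$ to allow the process to continue indefinitely and to permit $\mathfrak m_R$-adically convergent expansions of arbitrary $f \in R$ in terms of the $P_i$.

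Having produced the $P_i$, I would define $\nu$ on $R$ by declaring that each $f \in R$ admits a (possibly infinite) expansion as a $\mathfrak k$-linear combination of standard monomials in the $P_i$, and that $\nu(f)$ is the minimum value among its terms. The principal obstacle is verifying that this prescription is genuinely a valuation with $S^R(\nu) = S$ and $V_\nu/\mathfrak m_\nu = L$. The multiplicative law $\nu(fg) = \nu(f) + \nu(g)$ reduces to showing that the minimum-value term in an expansion is \emph{unique}, and this is exactly where the strict inequality $\beta_{i+1} > \overline n_i d_i \beta_i$ is indispensable: it prevents two distinct standard monomials from having the same value, so leading terms multiply to leading terms. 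When $\Lambda = \infty$ one also needs a finiteness statement---that only finitely many standard monomials have value below any given bound---so that expansions stabilize $\mathfrak m_R$-adically and completeness of $R$ yields a well-defined $\nu$. The two terminal cases for $\Lambda<\infty$ are treated separately: if $\overline n_\Lambda = \infty$ one gets a $\ZZ$-indexed tail in the semigroup generated by $\beta_\Lambda$, while if $d_\Lambda = \infty$ one obtains a transcendental contribution $\alpha_\Lambda$ to the residue field, which must be handled with a final limiting argument distinct from the inductive construction above.
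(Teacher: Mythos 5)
Your overall plan mirrors the paper's. For necessity you invoke the generating sequence algorithm (Theorem \ref{Theorem1*}) and the structure theorems (Theorems \ref{Corollary1*}, \ref{Corollary3*}); this is essentially what the paper does, though one must also reindex to skip the trivial steps (those with $n_i=1$) and note where completeness is actually used --- it forces the case $\Omega=\infty$, $n_i=1$ for $i\gg 0$ to have $I_{\hat R}\ne 0$, hence to fall into Case 3 of Theorem \ref{Corollary3*}. For sufficiency you construct $P_i$ from lifts of the minimal polynomials of the $\alpha_i$, expand elements of $R$ in standard monomials, and read off $\nu$ from the expansion; again this matches the paper's route.

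There is, however, a genuine gap in your justification of the multiplicative law. You write that the strict inequality $\beta_{i+1}>\overline n_id_i\beta_i$ is what ``prevents two distinct standard monomials from having the same value,'' so that leading terms multiply to leading terms. This is false as soon as the residue field extension is nontrivial. If $d_i>1$ then $\overline n_i<n_i$, so $P_i^{\overline n_i}$ is itself a legal standard monomial and it has the same value $\overline n_i\beta_i=\nu(U_i)$ as the standard monomial $U_i$ in $P_0,\ldots,P_{i-1}$. (Concretely: $\beta_0=\beta_1=1$, $\mathfrak k=\QQ$, $\alpha_1=\sqrt 2$ gives $\overline n_1=1$, $d_1=2$, $n_1=2$, and the distinct standard monomials $P_0$ and $P_1$ share the value $1$.) The leading part of an expansion is therefore a $\mathfrak k$-linear combination of several same-value monomials, and to get $\nu(fg)=\nu(f)+\nu(g)$ one must show that the product of the two nonzero sums of leading residues is again nonzero. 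The paper proves this via the inductive statement $D(i)$: for any collection of distinct standard monomials of equal value, the residues of their ratios are linearly independent over $\mathfrak k$. Establishing $D(i)$ is where the minimal polynomial of $\alpha_i$ over $\mathfrak k(\alpha_1,\ldots,\alpha_{i-1})$ enters nontrivially, and it must be carried inductively alongside $A(i),B(i),C(i)$ during the construction of the $P_i$ --- not merely used to define $P_{i+1}$. Your proposal never formulates or proves such a linear independence statement, which is precisely the point at which the residue field data $L$ feeds back into the verification that $\nu$ is a valuation; without it the argument that the proposed $\nu$ is multiplicative does not go through.

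A lesser omission: when $\Lambda=\infty$ you need the quantitative statement that ${\rm ord}(P_i)\to\infty$ (the paper's inequality (\ref{eqZ20})), derived from $\beta_{i+1}>\overline n_id_i\beta_i$, not just some informal ``finiteness'' of low-value monomials; this is what makes the expansion of an arbitrary $f\in R$ terminate modulo $\mathfrak m_R^n$ and lets completeness do its work.
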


Here $G(\beta_0,\ldots,\beta_i)$ is the subgroup  generated by $\beta_0,\ldots,\beta_i$.

The case when $R$ is not complete is more subtle, because of the possibility, when $R$ is not complete,  of the existence of a rank 1
discrete valuation which dominates $R$ and such that the residue field extension $V_{\nu}/\mathfrak m_{\nu}$ of $\mathfrak k=R/\mathfrak m_R$ is finite. For all other valuations $\nu$ which dominate $R$ (so that $\nu$ is not rank 1 discrete with $V_{\nu}/\mathfrak m_{\nu}$ finite over $\mathfrak k$)
the analysis is the same as for the complete case, as there is then a unique extension of $\nu$ to a valuation dominating the completion of $R$ which is an immediate extension; that is, there is no extension of the valuation semigroups or of the residue fields of the valuations. The differences between the complete and non complete cases are
explained in more detail by Theorem \ref{TheoremR4}, Corollary \ref{CorollaryR2}, Example \ref{ExampleR1}, Proposition \ref{Prop17} and Corollary \ref{CorollaryN1} to Theorem \ref{Theorem3*}.

We give a necessary and sufficient condition for a  semigroup  to be the semigroup of a valuation dominating a regular local ring
of dimension two in the following theorem, which is proven in Section \ref{Proof2}:

\begin{Theorem}\label{Corollary4*} 
Suppose that $R$ is a  regular local ring of dimension two.
Let $S$ be a subsemigroup of the positive elements of a totally ordered abelian group. 
Then $S$ is the semigroup of a valuation $\nu$ dominating $R$  if and only if
there exists a finite or countable index set $I$, of cardinality $\Lambda=|I|-1\ge 1$ and elements $\beta_i\in S$  for $i\in I$ such that
\begin{enumerate}
\item[1)] The semigroup $S$ is generated by $\{\beta_i\}_{i\in I}$.
\item[2)] Let 
$$
\overline n_i=[G(\beta_0,\ldots,\beta_i):G(\beta_0,\ldots, \beta_{i-1})].
$$
There are inequalities
$$
\beta_{i+1}>\overline n_i\beta_i
$$
with $\overline n_i<\infty$ for $1\le i<\Lambda$. If $\Lambda<\infty$ then $\overline n_{\Lambda}\le\infty$.
 \end{enumerate}
\end{Theorem}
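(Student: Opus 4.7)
The sufficiency direction reduces directly to Theorem \ref{Theorem3*}. Given the abstract data $\{\beta_i\}_{i\in I}$ satisfying the weaker inequality $\beta_{i+1}>\overline n_i\beta_i$, I fix any field $\mathfrak k$ and work with $\hat R=\mathfrak k[[x,y]]$, a complete regular local ring of dimension two. To apply Theorem \ref{Theorem3*} I must augment the data with a residue field extension $L/\mathfrak k$ and elements $\alpha_i\in L$ making the stronger inequality $\beta_{i+1}>\overline n_id_i\beta_i$ hold. I choose $\alpha_i\in\mathfrak k$ for every $i<\Lambda$ so that $d_i=1$, in which case the stronger inequality reduces exactly to the hypothesis. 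For the terminal index, if $\Lambda<\infty$ and $\overline n_\Lambda=\infty$, I again take $\alpha_\Lambda\in\mathfrak k$ (so $d_\Lambda=1$); if $\Lambda<\infty$ and $\overline n_\Lambda<\infty$, I take $\alpha_\Lambda$ transcendental over $\mathfrak k$ (so $d_\Lambda=\infty$). In each case the dichotomy demanded by condition 2 of Theorem \ref{Theorem3*} is met, and the theorem produces a valuation $\nu$ dominating $\hat R$ with $S^{\hat R}(\nu)=S$, which is also a valuation dominating a regular local ring of dimension two.

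For the necessity direction, suppose $\nu$ dominates a regular local ring $R$ of dimension two. I extend $\nu$ to a valuation $\hat\nu$ dominating the completion $\hat R$. By the dichotomy described between Theorems \ref{Theorem3*} and \ref{Corollary4*} and formalized in Proposition \ref{Prop17}, either $\hat\nu$ is an immediate extension of $\nu$, or $\nu$ is rank one discrete with finite residue extension $V_\nu/\mathfrak m_\nu$ over $\mathfrak k$. In the immediate case, $S^R(\nu)=S^{\hat R}(\hat\nu)$ and the value group of $\nu$ coincides with that of $\hat\nu$, so the subgroups $G(\beta_0,\ldots,\beta_i)$ and their indices $\overline n_i$ are the same whether computed in $R$ or $\hat R$; applying Theorem \ref{Theorem3*} to $\hat\nu$ on $\hat R$ yields a generating sequence with $\beta_{i+1}>\overline n_id_i\beta_i\ge\overline n_i\beta_i$, which is what is required.

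The remaining case, where $\nu$ is rank one discrete with finite residue extension, is the main obstacle. In this setting the inclusion $S^R(\nu)\subset S^{\hat R}(\hat\nu)$ may be strict, and the extension $\hat\nu/\nu$ can have nontrivial ramification index $e$ (so that the value group of $\hat\nu$ is $(1/e)\mathbb Z$ while that of $\nu$ is $\mathbb Z$) together with a nontrivial residue degree. I plan to use Proposition \ref{Prop17} together with Corollary \ref{CorollaryN1} to describe the generators of $S^R(\nu)$ directly in terms of those of $S^{\hat R}(\hat\nu)$: heuristically, the ramification and residue contributions absorb the factor $d_i$ present in the complete-case inequality, so that after descent to $R$ only the weaker bound $\beta_{i+1}>\overline n_i\beta_i$ survives. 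The technical difficulty will be verifying that the resulting sequence inside $S^R(\nu)$ is genuinely generating, and that the index computations $\overline n_i=[G(\beta_0,\ldots,\beta_i):G(\beta_0,\ldots,\beta_{i-1})]$ carried out inside the rank one value group $\mathbb Z$ recover the growth condition in the theorem; this is where the case-specific content of Section \ref{Proof2} should enter.
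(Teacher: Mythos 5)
Your sufficiency argument has a gap. The theorem asserts that $S$ is the semigroup of a valuation dominating the \emph{given} ring $R$, whereas you construct a valuation on $\mathfrak k[[x,y]]$. If $R$ has mixed characteristic, $\hat R\not\cong\mathfrak k[[x,y]]$, so this doesn't even produce a valuation on the completion of $R$. Even in equal characteristic, where $\hat R\cong\mathfrak k[[x,y]]$, a valuation $\hat\nu$ dominating $\hat R$ restricts to a valuation $\nu$ on $R$, but you must then \emph{prove} $S^R(\nu)=S^{\hat R}(\hat\nu)$ rather than merely the trivial inclusion $S^R(\nu)\subseteq S^{\hat R}(\hat\nu)$; this is automatic for rank one (by $\mathfrak m$-adic density of $R$ in $\hat R$) but requires care for rank two. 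The paper avoids all of this by observing that the explicit construction in the proof of Theorem~\ref{Theorem3*} — building the $P_i$ and defining $\nu$ on Laurent monomials via equations \eqref{eqM1*} and \eqref{fund} — makes no use of completeness; it is valid in any regular local ring $R$ of dimension two. One then takes $L=\mathfrak k$ (so that $d_i=1$ for all $i$, converting your weaker hypothesis into the stronger inequality of Theorem~\ref{Theorem3*}) when $\overline n_\Lambda=\infty$ or $\Lambda=\infty$, and $L=\mathfrak k(t)$ with $\alpha_\Lambda=t$ when $\Lambda<\infty$ and $\overline n_\Lambda<\infty$. Your choice of $\alpha_i$ is exactly right; the error is in which ring you run the construction over.

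Your necessity argument is also incomplete. You correctly handle the immediate-extension case, and correctly identify the residual difficulty as the rank one discrete case with $[V_\nu/\mathfrak m_\nu:\mathfrak k]<\infty$, but then leave it as a plan with an unresolved ``technical difficulty.'' In fact this case needs no separate argument: Corollary~\ref{CorollaryN1} already gives a generating set $\{\beta_i\}$ of $S^R(\nu)$ and elements $\{\alpha_i\}$ satisfying conditions~1) and~2) of Theorem~\ref{Theorem3*} for \emph{every} regular local ring $R$ of dimension two, complete or not — the only difference in the incomplete case is that the terminal pair $(\overline n_\Lambda,d_\Lambda)$ may both be finite. Since $\beta_{i+1}>\overline n_id_i\beta_i\ge\overline n_i\beta_i$ for $1\le i<\Lambda$, and $\overline n_\Lambda\le\infty$ holds trivially, conditions~1) and~2) of Theorem~\ref{Corollary4*} follow immediately. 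The discussion of ramification indices and residue degrees ``absorbing the factor $d_i$'' that you propose is not how the argument goes and is likely a dead end: the point is simply that the algorithm of Theorem~\ref{Theorem1*} applied to $R$ itself (not $\hat R$) already produces the generating set, and the slack between $\overline n_id_i\beta_i$ and $\overline n_i\beta_i$ is in the right direction.
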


Theorem \ref{Corollary4*} is proven by Spivakovsky when $R$ has algebraically closed residue field in \cite{S}.

 The proof in Section 5 of \cite{CDK}, given for the case when the residue field of $R$ is algebraically closed,
now extends to arbitrary regular local rings of dimension two, using the conclusions of Theorem \ref{Corollary4*}, to
prove the following:

\begin{Corollary}\label{lim}
Suppose that $R$ is a regular local ring of dimension two and $\nu$ is a rank 1 valuation dominating $R$. Embed the value group of
$\nu$ 
in $\RR_+$ so that $1$ is the smallest nonzero element of $S^R(\nu)$. Let $\phi(n)=|S^R(\nu)\cap (0,n)|$ for $n\in \ZZ_+$.
Then
$$
\lim_{n\rightarrow \infty} \frac{\phi(n)}{n^2}
$$
exists. The set of limits which are obtained by such valuations $\nu$ dominating $R$ is the real half open interval $[0,\frac{1}{2})$.
\end{Corollary}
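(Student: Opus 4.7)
My plan is to invoke Theorem~\ref{Corollary4*} and then transport the lattice-point counting argument from Section~5 of \cite{CDK} to the present setting. That argument was originally formulated under the hypothesis that the residue field of $R$ is algebraically closed, but it depends only on the structural properties of the generating sequence of $S^R(\nu)$, which Theorem~\ref{Corollary4*} now furnishes in full generality.

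By Theorem~\ref{Corollary4*} the semigroup $S=S^R(\nu)$ admits a generating sequence $1=\beta_0<\beta_1<\cdots$ satisfying the strict inequality $\beta_{i+1}>\overline{n}_i\beta_i$ with $\overline{n}_i=[G(\beta_0,\ldots,\beta_i):G(\beta_0,\ldots,\beta_{i-1})]<\infty$ for $1\le i<\Lambda$. Because $\beta_0=1$ is the smallest positive element of $S$ and $\beta_j\ge\beta_{k+1}$ for $j>k$, every element of $S$ strictly below $\beta_{k+1}$ is a nonnegative integer combination of $\beta_0,\ldots,\beta_k$; writing $S_k$ for the subsemigroup they generate, one obtains $\phi(n)=|S_k\cap(0,n)|$ for all $n\le\beta_{k+1}$. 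The core computation is a lattice-point count carried out via the canonical normal form $v=\sum_{i=0}^k a_i\beta_i$ with $0\le a_i<\overline{n}_{i+1}$ (arising from the relations $\overline{n}_i\beta_i\in G(\beta_0,\ldots,\beta_{i-1})$); combined with the inequalities $\beta_{i+1}>\overline{n}_i\beta_i$, this count yields an asymptotic for $\phi$ that is stable across the patching at each $\beta_k$ and so produces the existence of $\lim_{n\to\infty}\phi(n)/n^2$.

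For the range of this limit, the value $0$ is realized by a monomial valuation with $\nu(x)=1$ and $\nu(y)\in\ZZ$, $\nu(y)\ge 2$ (so that $S=\NN$ and $\phi(n)=n-1$); each irrational $\alpha\in(0,1/2)$ is realized by a two-generator monomial valuation on $k[[x,y]]$ with $\nu(x)=1$ and $\nu(y)=1/(2\alpha)$, for which a direct triangle count gives $\phi(n)\sim\alpha n^2$; and each rational $\alpha\in(0,1/2)$ is realized by a suitably constructed valuation of rank one with all $\beta_i$ rational and $\Lambda=\infty$, along the lines of the constructions in \cite{CDK}. The strict upper bound $L<1/2$ follows from $\beta_1>\beta_0=1$: the two-generator case gives $L=1/(2\beta_1)<1/2$ directly, and further generators (controlled by $\beta_{i+1}>\overline{n}_i\beta_i$) can only contribute within the structure of the normal-form count, which never attains density $1/2$. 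The principal obstacle is the careful lattice-point asymptotic together with the uniform error control needed to pass to the limit across all the intervals $(\beta_k,\beta_{k+1})$; this bookkeeping is exactly what \cite{CDK} performs in the algebraically closed case, and it carries over verbatim once Theorem~\ref{Corollary4*} is available.
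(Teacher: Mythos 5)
Your proposal takes essentially the same route as the paper: the paper's entire ``proof'' of Corollary~\ref{lim} is the single sentence preceding its statement, namely that the lattice-point argument of Section~5 of \cite{CDK} (given there under the hypothesis that the residue field is algebraically closed) now carries over to arbitrary two-dimensional regular local rings because Theorem~\ref{Corollary4*} supplies the required structure of $S^R(\nu)$ in complete generality. Your fleshing-out of the realization of $0$, of irrational $\alpha\in(0,\frac12)$ by monomial valuations, and of the upper bound via $\beta_1>\beta_0=1$ is consistent with that reference. One small slip: the canonical normal form arising from Lemma~\ref{Lemma2} is $v=\sum_{i=0}^k a_i\beta_i$ with $a_0\ge 0$ unbounded and $0\le a_i<\overline n_i$ for $1\le i\le k$, not $0\le a_i<\overline n_{i+1}$; the bound on $a_i$ comes from the relation $\overline n_i\beta_i\in G(\beta_0,\ldots,\beta_{i-1})$ that you yourself cite, so the subscript shift is just a typo and does not affect the argument.
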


As a consequence of Theorem \ref{Theorem3*}, we obtain the following example, which we prove in Section \ref{Proof2}, showing the subtlety of the
criteria of Theorem \ref{Theorem3*}.

\begin{Example}\label{Nores} There exists a semigroup $S$ which satisfies the sufficient conditions 1) and 2) of Theorem \ref{Corollary4*}, such that if $(R,\mathfrak m_R)$ is a 2-dimensional regular local ring dominated by a valuation $\nu$ such that $S^R(\nu)=S$, then $R/\mathfrak m_R=V_{\nu}/\mathfrak m_{\nu}$; that is, there can be no
residue field extension.
\end{Example}

The main technique we use in the proofs of the above theorems is the algorithm of Theorem \ref{Theorem1*}, which 
constructs a sequence of elements $\{P_i\}$ in $R$, starting with a given regular system of parameters
$P_0=x$, $P_1=y$ of  $R$, which gives a generating sequence of $\nu$ in $R$. This fact is proven in Theorems  \ref{Corollary1*} and \ref{Corollary3*}.

In Section \ref{RLR2}, we develop the birational theory of the generating sequence $\{P_i\}$, generalizing to the case when
$R$ has arbitrary residue field the results of \cite{S}.

Suppose that $R$ is a regular local ring of dimension two which is dominated by a valuation $\nu$. Let $\mathfrak k=R/\mathfrak m_R$ and 
\begin{equation}\label{eqX3*}
R\rightarrow T_1\rightarrow T_2\rightarrow \cdots 
\end{equation}
be the sequence of quadratic transforms along $\nu$, so that $V_{\nu}=\cup T_i$, and
$L=V_{\nu}/\mathfrak m_{\nu}=\cup T_i/\mathfrak m_{T_i}$.
Suppose that $x,y$ are regular parameters in $R$, and 
let $P_0=x$, $P_1=y$ and $\{P_i\}$ be the sequence of elements of $R$ constructed in Theorem \ref{Theorem1*}.
Suppose there exists some smallest value $i$ in the sequence (\ref{eqX3*}) such that
the divisor of $xy$ in $\mbox{Spec}(T_i)$ has only one component. Let $R_1=T_i$. By Theorem \ref{birat}, a local equation of the exceptional divisor and  a strict transform of $P_2$ in $R_1$ are a regular system of  parameters in $R_2$, and a local equation of the exceptional divisor and    a strict transform of $P_i$ in $R_1$ for $i\ge 2$ satisfy the conclusions
of Theorem \ref{Theorem1*} on $R_2$. 

We can repeat this construction, for this new sequence, to construct a sequence of quadratic transforms $R_1\rightarrow R_2$ such that a local equation of the exceptional divisor and a strict transform of $P_3$ is a regular system of parameters
in $R_2$, and a local equation of the exceptional divisor and a strict transform of $P_i$ for $\ge 3$ satisfy the conclusions
of Theorem \ref{Theorem1*} on $R_2$.

We  thus have a  sequence of iterated quadratic transforms  
$$
R\rightarrow R_1\rightarrow R_2\rightarrow \cdots
$$
such that $V_{\nu}=\cup R_i$ and where  a local equation of the exceptional divisor of $R_i\rightarrow R_{i+1}$ and the strict transform of $P_{i+1}$
are a regular system of parameters in $R_i$ for all $i$.

The notion of a generating sequence of a valuation already can be recognized in the famous algorithm of Newton
to find the branches of a (characteristic zero) plane curve singularity. In more modern times, it has been developed
by Maclane \cite{M} (``key polynomials''), Zariski \cite{Z1},  Abhyankar \cite{Ab3}, \cite{Ab4} (``approximate roots''), and Spivakovsky \cite{S}. Most recently, the construction and application of generating sequences  of
a valuation have appeared in many papers, including \cite{CG}, \cite{CGP}, \cite{CP}, \cite{EH}, \cite{FJ}, \cite{GAS}, \cite{GHK}, \cite{GK},
\cite{LMSS}, \cite{Mo}, \cite{V}.  The theory of generating sequences in regular local rings of dimension two is closely related to the configuration of exceptional curves appearing in the sequence of quadratic transforms along the center of the valuation. This subject has been explored in many papers, including \cite{Ca} and \cite{Li}.
The extension of valuations to the completion of a  local ring, which becomes extremely difficult
in higher dimension and rank, is studied in \cite{S}, \cite{HS}, \cite{CE}, \cite{C}, \cite{CK}, \cite{CoG}, \cite{CE} and \cite{HOST}.
There is an extensive literature on  the theory of complete ideals in local rings, beginning with 
Zariski's articles \cite{Z1} and \cite{ZS}.

We thank Soumya Sanyal for his meticulous reading of  this paper.

\section{Preliminaries}\label{Prel} Suppose that $(R,\mathfrak m_R)$ is a Noetherian local domain and $\nu$ is valuation of the quotient field which dominates $R$. Let $V_{\nu}$ be the valuation ring of $\nu$, and $\mathfrak m_{\nu}$ be its maximal ideal. 
Let $\Gamma_{\nu}$ be the value group of $\nu$. Let $\mathfrak k=R/\mathfrak m_R$. The semigroup of $\nu$ on $R$ is
$$
S^R(\nu)=\{\nu(f)\mid f\in R\setminus\{0\}\}.
$$

For $\phi\in \Gamma_{\nu}$, define valuation ideals
$$
\P_{\phi}(R)=\{f\in R\mid \nu(f)\ge \phi\},
$$
and
$$
\P_{\phi}^+(R)=\{f\in R\mid \nu(f)> \phi\}.
$$
We have that $\P_{\phi}^+(R)=\P_{\phi}(R)$ if and only if $\phi\not\in S^R(\nu)$.
The associated graded ring of $\nu$ on $R$ is
$$
{\rm gr}_{\nu}(R)=\bigoplus_{\phi\in\Gamma_{\nu}}\P_{\phi}(R)/\P_{\phi}^+(R).
$$
Suppose that $f\in R$ and $\nu(f)=\phi$. Then the initial form of $f$ in ${\rm gr}_{\nu}(R)$
is 
$$
{\rm in}_{\nu}(f)= f+\P_{\phi}^+(R)\in [{\rm gr}_{\nu}(R)]_{\phi}=\P_{\phi}(R)/\P_{\phi}^+(R).
$$

A set of elements $\{F_i\}_{i\in I}$ such that $\{{\rm in}_{\nu}(F_i)\}$ generates ${\rm gr}_{\nu}(R)$ as a $\mathfrak k$-algebra
is called a generating sequence of $\nu$ in $R$.

We have that the vector space dimension
$$
\mbox{dim}_{R/\mathfrak m_R}\P_{\phi}(R)/\P_{\phi}^+(R)<\infty
$$
and
$$
\mbox{dim}_{R/\mathfrak m_R}\P_{\phi}(R)/\P_{\phi}^+(R)\le [V_{\nu}/\mathfrak m_{\nu}:R/\mathfrak m_R]
$$
for all $\phi\in \Gamma_{\nu}$.

$S^R(\nu)$ is countable and is well ordered of ordinal type $\le \omega^2$ by Proposition 2, Appendix 3 \cite{ZS}. 
Further, $V_{\nu}/\mathfrak m_{\nu}$ is a countably generated field extension of $\mathfrak k=R/\mathfrak m_R$, since
${\rm gr}_{\nu}(R)$ is a countably generated vector space over $R/\mathfrak m_R$, and if $0\ne \alpha\in V_{\nu}/\mathfrak m_{\nu}$, then $\alpha$ is the residue of $\frac{f}{g}$ for some  $f,g\in R$ with $\nu(f)=\nu(g)$.

We will make use of Abhyankar's Inequality (\cite{Ab1}, Appendix 2 \cite{ZS}):
\begin{equation}\label{eq17}
{\rm\, rat\,\, rank\, }\nu+{\rm\, trdeg}_{R/\mathfrak m_R} V_{\nu}/\mathfrak m_{\nu}\le {\rm\, dim\, }R
\end{equation}
If equality holds then $\Gamma_{\nu}\cong \ZZ^m$ as an {\it unordered} group, where $m={\rm rat\,\,rank\,}\nu$, and $V_{\nu}/\mathfrak m_{\nu}$ is a finitely generated field extension of  $R/\mathfrak m_R$.

We have that 
$$
{\rm rank\,}\nu\le {\rm rat\,\,rank\,}\nu\le{\rm dim\,}R.
$$
Let $n={\rm rank\,}\nu$. Then we have an order preserving embedding 
\begin{equation}\label{eqZ11}
\Gamma_{\nu}\subset  \Gamma_{\nu}\RR\cong(\RR^n)_{\rm lex}
\end{equation}
(Proposition 2.10 \cite{Ab2}).
We  say that $\nu$ is discrete if $\Gamma_{\nu}$ is discrete in the Euclidean topology.

If $I$ is an ideal in $R$, we may define $\nu(I)=\mbox{min}\{\nu(f)\mid f\in I\setminus \{0\}\}$,
 since $S^R(\nu)$ is well ordered.

$\NN$ denotes the natural numbers $\{0,1,2,\ldots\}$ and $\ZZ_+$ denotes the positive integers $\{1,2,3,\ldots\}$.

Given elements $z_1,\ldots, z_n$ in a group $G$, let
$G(z_1,\ldots,z_n)$ be the subgroup generated by $z_1,\ldots,z_n$. Let $S(z_1,\ldots,z_n)$ be the semigroup
generated by $z_1,\ldots, z_n$.

\begin{Lemma}\label{Lemma2}
Suppose that $\Gamma$ is a totally ordered abelian group, $I$ is a finite or countable index set of cardinality $\ge 2$ and $\beta_i\in\Gamma$ are positive elements for $i\in I$. Let $\Lambda=|I|-1$. Let
$$
\overline n_i=[G(\beta_0,\ldots, \beta_i):G(\beta_0,\ldots,\beta_{i-1})]\in\ZZ_+\cup\{\infty\}
$$
for $\ge 1$.
Assume that $\overline n_i\in\ZZ_+$ if $i<\Lambda$. Let $s_i$ be the smallest positive integer $t$ such that $t\beta_i\in S_{i-1}$ (or $s_i=\infty$ if $i=\Lambda$ and no such $t$ exists).
\vskip .2truein

Suppose that $1\le k<\Lambda$ and $\overline n_i\beta_i<\beta_{i+1}$ for $1\le i\le k-1$. Then
\begin{enumerate}
\item[1)] $s_i=\overline n_i$ for $1\le i\le k$.
\item[2)] If $\gamma\in G(\beta_0,\ldots,\beta_k)$ and $\gamma\ge \overline n_k\beta_k$ then $\gamma\in S(\beta_0,\ldots,\beta_k)$.
\end{enumerate}
\end{Lemma}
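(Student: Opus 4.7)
The plan is to prove both parts simultaneously by induction on $k$, with essentially all of the work going into part 2); part 1) follows as a corollary at each step. The key structural fact used throughout is that, since $\overline n_k<\infty$, the quotient $G(\beta_0,\ldots,\beta_k)/G(\beta_0,\ldots,\beta_{k-1})$ is cyclic of order $\overline n_k$, generated by the class of $\beta_k$. Consequently every $\gamma\in G(\beta_0,\ldots,\beta_k)$ admits a unique representation $\gamma=\delta+c\beta_k$ with $\delta\in G(\beta_0,\ldots,\beta_{k-1})$ and $0\le c<\overline n_k$.

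For the base case $k=1$ the hypothesis $\overline n_i\beta_i<\beta_{i+1}$ for $1\le i\le 0$ is vacuous. Since $G(\beta_0)=\ZZ\beta_0$ and $\beta_0,\beta_1,\overline n_1>0$, writing $\overline n_1\beta_1=a_0\beta_0$ forces $a_0>0$, hence $\overline n_1\beta_1\in S(\beta_0)$ and $s_1=\overline n_1$ (no smaller positive multiple of $\beta_1$ lies even in the group). For part 2), decompose $\gamma=\delta+c\beta_1$ with $\delta\in\ZZ\beta_0$ and $0\le c<\overline n_1$; then $\gamma\ge\overline n_1\beta_1$ gives $\delta\ge(\overline n_1-c)\beta_1\ge\beta_1>0$, so $\delta=a\beta_0$ with $a>0$ and $\gamma\in S(\beta_0,\beta_1)$.

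For the inductive step I assume both parts at $k-1$ and prove them at $k$. To prove part 2) at $k$, take $\gamma\in G(\beta_0,\ldots,\beta_k)$ with $\gamma\ge\overline n_k\beta_k$ and write $\gamma=\delta+c\beta_k$ as above; then $\delta\ge(\overline n_k-c)\beta_k\ge\beta_k$, and the hypothesis of the lemma at $i=k-1$ yields $\overline n_{k-1}\beta_{k-1}<\beta_k\le\delta$. The inductive form of part 2) at $k-1$ then places $\delta$ in $S(\beta_0,\ldots,\beta_{k-1})$, so $\gamma\in S(\beta_0,\ldots,\beta_k)$. For part 1) at $k$: the element $\overline n_k\beta_k$ lies in $G(\beta_0,\ldots,\beta_{k-1})$ by the definition of the index and satisfies $\overline n_k\beta_k\ge\beta_k>\overline n_{k-1}\beta_{k-1}$, so the inductive part 2) lands it in $S(\beta_0,\ldots,\beta_{k-1})$; meanwhile any $t\beta_k$ with $0<t<\overline n_k$ fails to lie even in $G(\beta_0,\ldots,\beta_{k-1})$, again by the definition of $\overline n_k$. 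Hence $s_k=\overline n_k$.

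The only (minor) obstacle is verifying that the coset decomposition truly forces $\delta\ge\beta_k$ in the extremal case $c=\overline n_k-1$; once that is in hand, the strict inequality $\overline n_{k-1}\beta_{k-1}<\beta_k$ supplied by the hypothesis is precisely what propagates the assertion ``everything at or above $\overline n_k\beta_k$ in the group already lies in the semigroup'' from level $k-1$ to level $k$.
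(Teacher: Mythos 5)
Your proof is correct and uses essentially the same idea as the paper: decompose $\gamma$ via the canonical (Euclidean) coset representatives $0\le a_i<\overline n_i$ and exploit the chain of inequalities $\overline n_i\beta_i<\beta_{i+1}$ to force the $\beta_0$-coefficient to be positive. The paper writes the full decomposition $\gamma=a_0\beta_0+\cdots+a_k\beta_k$ at once and bounds the tail directly, whereas you peel off one $\beta_k$ at a time and invoke the inductive hypothesis at level $k-1$; this is a clean repackaging of the same computation rather than a different route.
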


\begin{proof} We first prove 2). By repeated Euclidean division, we obtain an expansion $\gamma=a_0\beta_0+a_1\beta_1+\cdots+a_k\beta_k$ with $a_0\in \ZZ$ and $0\le a_i< \overline n_i$ for $1\le i\le k$. Now we calculate, using the inequalities $\overline n_i\beta_i<\beta_{i+1}$,
$$
a_1\beta_1+\cdots+a_k\beta_k<\overline n_k\beta_k.
$$
Thus $a_0>0$ and $\gamma\in S(\beta_0,\ldots,\beta_k)$.

 Now 1) follows from 2) and induction on $k$. 
 %The case $k=1$ follows from letting $m,n\in\ZZ_+$ be such that $m\beta_0=n\beta_1$ with $\mbox{gcd}(m,n)=1$. Then we calculate that
%$\overline n_1=n$ and $\overline n_1\beta_1=m\beta_0$. We see that $s_1=\overline n_1$.
\end{proof}

A Laurent monomial in $H_0,H_1,\ldots, H_l$ is a product $H_0^{a_0}H_1^{a_1}\cdots H_l^{a_l}$ with $a_0,a_1,\ldots, a_l\in\ZZ$.

Suppose that $R$ is a regular local ring with maximal ideal $\mathfrak m_R$. Suppose that $f\in R$. Then we define
$$
{\rm ord}(f)=\max\{n\in\NN\mid f\in \mathfrak m_R^n\}.
$$

\section{Regular local rings of dimension two}\label{RLR1}

Suppose that $(R,\mathfrak m_R)$ is a Noetherian local domain of dimension two.
Up to order isomorphism, the value groups $\Gamma_{\nu}$ of a valuation $\nu$ which dominates $R$ are by Abhyankar's inequality and Example 3, Section 15, Chapter VI \cite{ZS}:
\begin{enumerate}
\item[1.] $\alpha\ZZ+\beta\ZZ$ with $\alpha,\beta\in\RR$ rationally independent.
\item[2.] $(\ZZ^2)_{\rm lex}$.
\item[3.] Any subgroup of $\QQ$.
\end{enumerate}

Suppose that $N$ is a field, and $V$ is a valuation ring of $N$.
We say that the rank of $V$ increases under completion if there exists an analytically normal local domain
$T$ with quotient field  $N$ such that $V$ dominates $T$ and there exists an extension of $V$ to a valuation ring of the quotient field of $\hat T$ which dominates $\hat T$ and which has higher rank than the rank of $V$.

\begin{Theorem}\label{TheoremR4}(Theorem 4.2, \cite{CK}; \cite{S} in the case when $R/\mathfrak m_R$ is algebraically closed)
Suppose that $V$ dominates an excellent two dimensional local ring $R$. Then the rank of $V$ increases under completion if and only if $V/\mathfrak m_V$ is finite over $R/\mathfrak m_R$  and $V$ is discrete of rank 1.\end{Theorem}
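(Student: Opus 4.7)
The plan is to split the proof into the two directions of the equivalence, using the classification of value groups $\Gamma_{\nu}$ for a valuation dominating a two-dimensional local ring (recalled at the start of Section \ref{RLR1}) together with Abhyankar's inequality (\ref{eq17}). The possibilities for $V$ are: (a) $\Gamma_{\nu}\cong(\ZZ^2)_{\rm lex}$ (rank 2); (b) $\alpha\ZZ+\beta\ZZ\subset\RR$ with $\alpha,\beta$ rationally independent (rank 1, rat rank 2); (c) $\ZZ$ with transcendence degree 1 residue extension; (d) a non-discrete subgroup of $\QQ$; (e) $\ZZ$ with $V/\mathfrak m_V$ infinite algebraic over $\mathfrak k$; or (f) $\ZZ$ with $[V/\mathfrak m_V:\mathfrak k]<\infty$. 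The theorem asserts that the rank increases under completion if and only if we are in case (f).

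For the necessity direction, case (a) is trivial since any extension $V'$ to $\hat T$ satisfies $\mbox{rank}\,V'\le \mbox{dim}\,\hat T=2$. In cases (b)--(e) I would show that the extension of $V$ to $\hat T$ is immediate by applying Theorem \ref{Theorem1*} to an analytically normal 2-dimensional local domain $T$ with quotient field $N$ dominated by $V$: the generating sequence $\{P_i\}$ together with the data $\{\beta_i\},\{\alpha_i\}$ completely determines $V$ via Theorems \ref{Corollary1*} and \ref{Corollary3*}. In each of (b)--(e) the sequence is either finite with one of $\overline n_{\Lambda},d_{\Lambda}$ infinite, or infinite with $\beta_i$ cofinal, and in both situations the data determines a unique extension of $V$ to a valuation dominating $\hat T$. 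Since the $P_i\in T\subset \hat T$, it follows that $V'$ has the same value group and residue field as $V$, hence the same rank.

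For the sufficiency direction, suppose $V$ is rank 1 discrete with $[V/\mathfrak m_V:\mathfrak k]<\infty$. After replacing $R$ by a 2-dimensional analytically normal local domain $T$ dominated by $V$ (for instance a suitable iterated quadratic transform along $\nu$, or a normalization thereof), I would find a height 1 prime $P\subset \hat T$ with $P\cap T=0$; such $P$ exists because $T$ is not complete and its formal spectrum contains analytic branches at the closed point transcendental over $T$, and the assumption that $V$ is discrete of rank 1 gives us the freedom to choose $P$ compatibly with $\nu$. The inclusion $T\hookrightarrow \hat T/P$ is a local injection into a one-dimensional analytically normal local domain whose quotient field contains $\mbox{Quot}(T)$, and since $\mathfrak m_V\cdot(\hat T/P)$ remains a proper ideal, standard valuation extension theory yields a valuation ring $V''$ of $\mbox{Quot}(\hat T/P)$ restricting to $V$ on $\mbox{Quot}(T)$ and dominating $\hat T/P$; the hypothesis $[V/\mathfrak m_V:\mathfrak k]<\infty$ is what ensures such an extension exists with the required compatibility, since the residue field of $\hat T/P$ is a finite extension of $\mathfrak k$. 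Composing $V''$ with the $P$-adic discrete valuation of $\hat T$ then produces a rank 2 valuation of $\mbox{Quot}(\hat T)$ dominating $\hat T$ and restricting to $V$ on $T$, witnessing the rank increase. The main obstacle will be the case-by-case immediacy argument in (b)--(e), which requires careful bookkeeping with the generating sequence and the tower of quadratic transforms under passage to $\hat T$; the combined discreteness and finite residue extension in (f) is precisely what both allows the existence of the formal branch $P$ compatible with $\nu$ and permits the extension of $V$ across it to build the rank 2 composite valuation.
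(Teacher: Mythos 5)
This paper does not actually prove Theorem~\ref{TheoremR4}; it is imported from \cite{CK} (and from \cite{S} in the algebraically closed residue case), so there is no internal proof to compare your argument against, although Proposition~\ref{Prop17} and the proof of Remark~\ref{Remark2} develop exactly the machinery a proof needs. Your overall scaffolding --- a value-group/residue-field case split, generating sequences for the necessity direction, a formal branch for the sufficiency direction --- is the right shape, but as written the argument has genuine gaps in both directions.

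For necessity you apply Theorem~\ref{Theorem1*} directly to an arbitrary analytically normal two-dimensional domain $T$; that theorem is stated and proved only for \emph{regular} local rings, so you must first descend to a regular model along $\nu$ and then compare extensions over $\hat T$ with extensions over the completion of that model, a step you skip. The claim that ``the data determines a unique extension, hence the extension is immediate'' is an assertion rather than an argument: what one actually needs is that any rank-two extension $V'$ of $V$ to $\hat T$ produces a height-one prime $P\subset\hat T$ with $P\cap T=(0)$, so that the rank-one part of $V'$ is a discrete valuation dominating the one-dimensional complete domain $\hat T/P$, whose residue field is \emph{finite} over $T/\mathfrak m_T$ because its integral closure is a finite module; only this chain of observations simultaneously forces discreteness and a finite residue extension and thereby rules out your cases (b)--(e). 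The sufficiency direction is the hard content of the theorem, and your write-up replaces its key step with an appeal (``the assumption $\ldots$ gives us the freedom to choose $P$ compatibly with $\nu$''). It is not true in general that a non-complete local ring has formal branches transcendental over it, and even when such primes exist, producing one compatible with $\nu$ is exactly the crux. The actual mechanism (see the proof of Remark~\ref{Remark2} and its use at the start of Section~\ref{Proof1}) is that $\overline n_i=1$ and $d_i=1$ for $i\gg 0$, which is precisely the discrete rank-one, finite-residue situation, force the generating sequence $\{P_i\}$ of Theorem~\ref{Theorem1*} in a regular model to become an $\mathfrak m$-adic Cauchy sequence with $\nu(P_i)\to\infty$; its limit is a nonzero $f\in\hat T\setminus T$ generating the height-one prime $I_{\hat T}$, and $\nu$ then extends as a rank-two composite. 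Your proposal announces this conclusion without supplying the construction. Finally, your argument nowhere uses the excellence hypothesis on $R$, which should itself be a warning sign.
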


\begin{Corollary}\label{CorollaryR2} If $R$ is complete and $\nu$ is a discrete
rank one valuation which dominates $R$ then $[V_{\nu}/\mathfrak m_{\nu}:R/\mathfrak m_R]=\infty$. 
\end{Corollary}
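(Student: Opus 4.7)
The plan is to derive this corollary as a quick contrapositive of Theorem~\ref{TheoremR4}. Suppose for contradiction that $[V_{\nu}/\mathfrak m_{\nu} : R/\mathfrak m_R] < \infty$. Since $R$ is a complete Noetherian local ring, it is excellent, and in the situation of the paper (where $R$ is regular of dimension two) it is also analytically normal, so the hypotheses of Theorem~\ref{TheoremR4} are met. Combining the finite residue extension assumption with the hypothesis that $\nu$ is discrete of rank one, the forward implication of Theorem~\ref{TheoremR4} tells us that the rank of $\nu$ must strictly increase under completion of $R$.

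Now I would observe that this conclusion is impossible because $R$ is already complete: we have $\hat R = R$, so the only valuation of the quotient field of $\hat R$ which dominates $\hat R$ and restricts to $\nu$ is $\nu$ itself. In particular every such extension has rank one, not strictly greater. This contradicts the strict increase in rank forced by Theorem~\ref{TheoremR4}, and we conclude that the residue extension $V_{\nu}/\mathfrak m_{\nu}$ must be infinite over $\mathfrak k = R/\mathfrak m_R$.

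The only real content is the bookkeeping: one must check the hypotheses for invoking Theorem~\ref{TheoremR4} (excellence and analytic normality of the complete regular local ring $R$) and verify that the phrase ``rank of $\nu$ increases under completion'' in the preceding definition reduces, when $R = \hat R$, to an obviously vacuous condition, so that the iff of Theorem~\ref{TheoremR4} really produces a contradiction. No genuine obstacle beyond this definitional unwinding is expected.
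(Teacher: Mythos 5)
Your proof is correct and is exactly the argument the paper intends; Corollary~\ref{CorollaryR2} is stated without proof precisely because it is this immediate contrapositive of Theorem~\ref{TheoremR4}, using that a complete regular local ring is excellent and analytically normal, and that when $\hat R = R$ the only extension of $\nu$ to a valuation of the quotient field of $\hat R$ dominating $\hat R$ is $\nu$ itself, which has no increase in rank.
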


The following example shows an important distinction between the case when $R$ is complete and when $R$ is not.

\begin{Example}\label{ExampleR1} Suppose that $\mathfrak k$ is a field and $R=\mathfrak k[x,y]_{(x,y)}$ is a localization of a polynomial ring
in two variables. Then there exists a rank one discrete valuation $\nu$ dominating $R$ such that $V_{\nu}/\mathfrak m_{\nu}=\mathfrak k$.
\end{Example}

\begin{proof} Let $f(t)\in\mathfrak k[[t]]$ be a transcendental element over $\mathfrak k(t)$. Embed $R$ into $\mathfrak k[[t]]$ by
substituting $t$ for $x$ and $f(t)$ for $y$. The valuation $\nu$ on $R$ obtained by restriction of the $t$-adic valuation
to $R$ has the desired properties.
\end{proof}

Suppose that $\nu$ is a valuation which dominates $R$. Let $a$ be the smallest positive element in $S^R(\nu)$.
Suppose that $\{f_i\}$ is a Cauchy sequence in $R$ (for the $\mathfrak m_R$-adic topology). Then either there exist
$n_0\in \ZZ_+$, $m\in\ZZ_+$ and $\gamma\in S^R(\nu)$ such that $\gamma<ma$ and $\nu(f_i)=\gamma$ for $i\ge n_0$, or
\begin{equation}\label{eqZ12}
\mbox{Given $m\in \ZZ_+$, there exists $n_0\in\ZZ_+$ such that $\nu(f_i)>ma$ for $i>n_0$}
\end{equation}
Let $I_{\hat R}$ be the set of limits of Cauchy sequences $\{f_i\}$ satisfying (\ref{eqZ12}). Then $I_{\hat R}$ is a prime
ideal in $\hat R$ (\cite{C}, \cite{CG}, \cite{CE}, \cite{S}, \cite{T}). The following proposition is well known.

\begin{Proposition}\label{Prop17}
Suppose that $R$ is a regular local ring of dimension two,
 and let $\nu$ be a valuation which dominates  $R$. 
Then there exists  an extension of $\nu$ to a valuation $\hat \nu$ which dominates the
completion $\hat R$ of $R$ with respect to $\mathfrak m_R$, which has one of the following semigroups:
\begin{enumerate}
\item[1.]  ${\rm rank\,}\nu={\rm rank\,}\hat\nu=1$ and
\begin{equation}\label{eq14}
S^R(\nu)=S^{\hat R}(\hat\nu).
\end{equation}
\item[2.]  $\nu$ is discrete of rank 1,  $\hat\nu$ is discrete of rank 2 and
\begin{equation}\label{eq15}
\mbox{$S^{\hat R}(\hat\nu)$ is generated by $S^R(\nu)$ and an element $\alpha$ such that $\alpha>\gamma$ for all $\gamma\in S^R(\nu)$}.
\end{equation}
\item[3.]  $\nu$ and $\hat\nu$ are discrete of rank 2,  there exists a height one prime $I_R$ in $R$,
 and a discrete rank 1 valuation $\overline \nu$ which dominates the maximal ideal $\mathfrak m_R( R/I_R)$ of $R/I_R$ such that  
\begin{equation}\label{eq16}
\begin{array}{l}
\mbox{$S^{R}(\nu)$ is generated by $S^{R/I_R}(\overline \nu)$ and an element $\alpha$ such that $\alpha>\gamma$}\\
\mbox{for all $\gamma\in S^{R/I_R}(\overline \nu)$.}\\
\mbox{$S^{\hat R}(\hat\nu)$ is generated by $S^{R/I_R}(\overline \nu)$ and an element $\beta$ such that $\alpha-t\beta\in S^{R/I_R}(\overline\nu)$,}\\
\mbox{for some $t\in\ZZ_+$. If $R_{\mathfrak m}$ is excellent, then $t=1$.}
\end{array}
\end{equation}
\item[4.] $\nu$ and $\hat\nu$ are discrete of rank 2, $I_{\hat R}=(0)$ and $S^R(\nu)=S^{\hat R}(\hat \nu)$.
\end{enumerate}
\end{Proposition}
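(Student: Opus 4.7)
The approach is to construct $\hat\nu$ via the prime ideal $I_{\hat R}\subset\hat R$ defined before the statement, and then sort the outcome into the four listed patterns according to $\mathrm{rank}\,\nu$ and whether $I_{\hat R}=(0)$. First I would verify that $\nu$ extends by continuity to a valuation $\overline\nu$ on the quotient field of $\hat R/I_{\hat R}$ dominating $\hat R/I_{\hat R}$, with $V_{\overline\nu}/\mathfrak m_{\overline\nu}=V_\nu/\mathfrak m_\nu$. The key point is that for $\overline f\ne 0$ in $\hat R/I_{\hat R}$, any lift $\{f_i\}\subset R$ converging $\mathfrak m_R$-adically to $f$ has $\nu(f_i-f_j)\to\infty$; since $f\notin I_{\hat R}$ the sequence $\nu(f_i)$ must fail (\ref{eqZ12}), and a ``minimum wins'' argument then shows $\nu(f_i)$ is eventually constant, with common value $\overline\nu(\overline f)$. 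Because $\hat R$ is a two dimensional regular local ring and $\overline\nu$ is nontrivial, $I_{\hat R}$ is either $(0)$ or a principal height one prime $(\pi)$.

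If $I_{\hat R}=(0)$ I would set $\hat\nu=\overline\nu$. If $I_{\hat R}=(\pi)$ I would extend $\overline\nu$ by the standard composite construction: writing each nonzero $f\in\hat R$ uniquely as $f=\pi^t u$ with $\pi\nmid u$, put
$$
\hat\nu(f)=(t,\overline\nu(\overline u))\in\ZZ\oplus\Gamma_{\overline\nu},
$$
ordered lexicographically with the first coordinate dominant. A routine check shows this is a valuation of rank $\mathrm{rank}\,\overline\nu+1$ dominating $\hat R$ and restricting to $\nu$ on $R$.

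The case analysis then runs by $\mathrm{rank}\,\nu\in\{1,2\}$. If $\mathrm{rank}\,\nu=1$ and $I_{\hat R}=(0)$, we obtain Case 1 immediately, since $\hat\nu=\overline\nu$ preserves the semigroup. If $\mathrm{rank}\,\nu=1$ and $I_{\hat R}\ne(0)$, then $\hat\nu$ has rank 2 with $S^R(\nu)$ in the second coordinate and $\alpha=\hat\nu(\pi)$ in the first, so $\alpha$ exceeds all of $S^R(\nu)$; this is Case 2, and Theorem \ref{TheoremR4} identifies $\nu$ as discrete of rank 1 with finite residue extension (for excellent $R$). If $\mathrm{rank}\,\nu=2$, let $I_R\subset R$ be the contraction of the height one prime of $V_\nu$. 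Since $S^R(\nu)$ generates $\Gamma_\nu$ as a group, it must contain elements outside the smaller convex subgroup, so $I_R$ is a principal height one prime and $R/I_R$ carries the induced rank 1 valuation $\overline\nu_R$. For any generator $g$ of $I_R$, $S^R(\nu)$ is generated by $S^{R/I_R}(\overline\nu_R)$ and $\alpha=\nu(g)$, with $\alpha$ exceeding all of $S^{R/I_R}(\overline\nu_R)$. If $I_{\hat R}=(0)$ this is Case 4; otherwise $I_{\hat R}=(\pi)$, and writing $g=\pi^t u$ with $\pi\nmid u$ gives $\alpha-t\beta=\overline\nu(\overline u)\in S^{R/I_R}(\overline\nu_R)$ with $\beta=\hat\nu(\pi)$, which is Case 3.

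The main obstacle is the final claim in Case 3, namely that $t=1$ when $R$ is excellent. This amounts to showing $I_R\hat R=I_{\hat R}$, so that $\pi$ and $g$ generate the same principal ideal of $\hat R$. The inclusion $I_R\hat R\subseteq I_{\hat R}$ follows from the constant sequence argument: any $h\in I_R$ satisfies $\nu(h)>ma$ for all $m$, hence $h\in I_{\hat R}$. For the reverse inclusion, excellence of $R$ implies $R/I_R$ is analytically unramified, so $\hat R/I_R\hat R$ is reduced; combined with the uniqueness of the extension of $\overline\nu_R$ to $\hat R/I_R\hat R$, this forces $I_R\hat R$ to equal the height one prime $I_{\hat R}$. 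Without excellence, $I_R\hat R$ can be a proper power of $I_{\hat R}$ (or fail to be radical), which accounts for the values $t>1$ that the proposition permits in general.
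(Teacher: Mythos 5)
Your overall strategy — extend $\nu$ through $\hat R/I_{\hat R}$, then compose with the order of vanishing along $I_{\hat R}$ when that ideal is nonzero — is the same as the paper's, and the construction of $\hat\nu$ and the treatment of Cases~1 and 2 (apart from an unnecessary appeal to Theorem~\ref{TheoremR4}, which assumes excellence; the cleaner route is simply that $\overline\nu$ dominates the one--dimensional ring $\hat R/I_{\hat R}$ and hence is discrete of rank~1) are essentially correct.

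There is, however, a genuine gap in the rank~2 analysis. You define $I_R$ as the contraction $\mathfrak p_\nu\cap R$ of the height one prime $\mathfrak p_\nu$ of $V_\nu$, and claim that since $S^R(\nu)$ has elements outside the smaller convex subgroup $\Gamma_1$, this contraction is a \emph{height one} prime. That inference fails: it is entirely possible that \emph{every} nonzero element of $\mathfrak m_R$ has value outside $\Gamma_1$, in which case $\mathfrak p_\nu\cap R=\mathfrak m_R$ (height two). This happens precisely when the rank~1 composite $(V_\nu)_{\mathfrak p_\nu}$ already dominates $R$ --- for instance, $\nu$ a composition of the $\mathfrak m_R$--adic order valuation with a valuation on its residue field. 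In that case $I_{\hat R}\cap R=(0)$, and by the Abhyankar argument in the paper $I_{\hat R}=(0)$; this is exactly Case~4. Conversely, when $\mathfrak p_\nu\cap R$ \emph{is} height one it lies in $I_{\hat R}\cap R$, forcing $I_{\hat R}\ne(0)$, so the branch you label ``$I_R$ height one and $I_{\hat R}=(0)$, hence Case~4'' is vacuous. Thus under your hypotheses Case~4 never actually arises, yet it does occur; and when $\mathfrak p_\nu\cap R=\mathfrak m_R$ the remainder of your paragraph (generator $g$ of $I_R$, the quotient $R/I_R$, etc.) makes no sense. The paper avoids this by taking $I_R:=I_{\hat R}\cap R$, which is a height one prime exactly when $I_{\hat R}\ne(0)$, and treats the $I_{\hat R}\cap R=(0)$ case separately via Abhyankar's inequality.

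A secondary issue: in Case~3 you conclude $\alpha-t\beta=\overline\nu(\overline u)\in S^{R/I_R}(\overline\nu_R)$, but $\overline u$ lives in $\hat R/I_{\hat R}$, so a priori $\overline\nu(\overline u)\in S^{\hat R/I_{\hat R}}(\hat{\overline\nu})$. You need the equality $S^{R/I_R}(\overline\nu_R)=S^{\hat R/I_{\hat R}}(\hat{\overline\nu})$ --- which is also required for the assertion that $S^{\hat R}(\hat\nu)$ is generated by $S^{R/I_R}(\overline\nu_R)$ and $\beta$ --- and this must be proved; the paper does so by an explicit approximation argument. Your ``eventually constant'' observation in the first paragraph can be adapted to give this, but as written it concerns lifts from $R$ rather than from $R/I_R$, and in the rank~2 case what extends is the rank~1 composite $\overline\nu_R$, not $\nu$ itself.
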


\begin{proof}
First suppose that $\nu$ has rank 1. Then  $I_{\hat R}\cap R=(0)$, so we have an embedding
$R\subset \hat R/I_{\hat R}$. We can then extend $\nu$ to a valuation $\overline\nu$ which  dominates  $\hat R/I_{\hat R}$ by defining for
$f\not\in I_{\hat R}$, $\overline\nu(f+I_{\hat R})=\lim_{i\rightarrow\infty}\nu(f_i)$, where $\{f_i\}$ is a Cauchy sequence in $R$ representing $f$. We have that $S^R(\nu)=S^{\hat R/I_{\hat R}}(\overline \nu)$.

If $I_{\hat R}=(0)$ then we have constructed the desired extension $\hat\nu=\overline\nu $ of $\nu$ to $\hat R$. Suppose that $I_{\hat R}\ne (0)$. 
Then $\hat R/I_{\hat R}$ has dimension 1, so $\overline \nu$ is discrete of rank 1. We have that $I_{\hat R}=(v)$ is a height one prime ideal. We can extend $\overline \nu$ to a rank 2 valuation $\hat\nu$ which dominates $\hat R$ by defining $\hat \nu(f)=(n,\overline\nu(g))\in(\ZZ\bigoplus\Gamma_{\overline \nu})_{\rm lex}$
if $f\in \hat R$ has a factorization $f=v^ng$ where $n\in \NN$ and $v\not\,\mid g$.

Now assume that $\nu$ has rank 2. Further assume that $I_{\hat R}\cap R\ne (0)$. Then $\nu$ has rank 2, and $I_R=I_{\hat R}\cap R$ is a height one prime ideal in $R$. Thus there exists an irreducible $g\in R$ such that $I_R=(g)$. We then have that $I_{\hat R}$ is a height one prime ideal in $\hat R$, so there exists an irreducible $v\in \hat R$ such that $I_{\hat R}=(v)$.

There exists a valuation $\overline\nu$ dominating $R/I_R$ such that if $f\in R$ has a factorization $f=g^nh$ where $g\not\,\mid h$, then 
$$
\nu(f)=n\nu(g)+\overline\nu(h).
$$

Write $g=v^t\phi$ where $t\in\ZZ_+$ and $v\not\,\mid\phi$.
Thus $\phi\not\in I_{\hat R}$. If $R$ is excellent,
then  $g$ is reduced in $\hat R$ (by Scholie IV 7.8.3 (vii) \cite{G}), so $t=1$. 
 We have an inclusion $R/I_R\subset \hat R/I_{\hat R}$, and $\overline\nu$ extends to a valuation $\hat{\overline\nu}$ which dominates $\hat R/I_{\hat R}$. We then extend $\nu$ to a valuation $\hat\nu$ which dominates $\hat R$ by setting
 $$
 t\hat\nu(v)=\nu(g)-\hat{\overline\nu}(\phi)
 $$
 in $\Gamma_{\nu}\RR\cong(\RR^2)_{\rm lex}$.
 Suppose that $0\ne f\in \hat R$. Factor $f$ as $f=v^nh$ where $n\in\NN$ and $v\not\,\mid h$. 
 Then define 
 $$
 \hat\nu(f)=n\hat\nu(v)+\hat{\overline\nu}(h).
 $$
 
 We now show that $S^{R/I_R}(\overline\nu)=S^{\hat R/I_{\hat R}}(\hat{\overline\nu})$. 
 We have that 
 $\hat{\overline\nu}(\mathfrak m(\hat R/I_{\hat R}))=\overline\nu(\mathfrak m(R/I_{R}))$. 
 Suppose that $0\ne h\in \hat R/I_{\hat R}$, and that $\hat{\overline \nu}(h)=\gamma$.
 There exists $n\in \ZZ_+$ such that 
 $n\hat{\overline \nu}(\mathfrak m(\hat R/I_{\hat R}))>\gamma$ and there exists $f\in R$ such that if $\overline f$ is the image of $f$ in $R/I_{R}$, then $\overline f-h\in \mathfrak m^n(\hat R/I_{\hat R})$. Thus $\nu(f)=\overline\nu(\overline f)=\hat{\overline \nu}(h)=\gamma$. 
 
 Suppose that $\mbox{rank }\nu=2$ and $I_{\hat R}\cap R=(0)$. We can extend $\nu$ to a valuation $\overline\nu$ dominating 
 $R/I_{\hat R}$ by defining for $f\not\in I_{\hat R}$, $\overline \nu(f+I_{\hat R})=\lim_{i\rightarrow \infty}\nu(f_i)$ if $\{f_i\}$ is a Cauchy sequence in $R$ converging to $f$. We must have that $I_{\hat R}=(0)$, since otherwise we would be able to extend $\overline \nu$ to a valuation $\tilde\nu$ dominating $\hat R$ which is composite with the rank 2 extension $\overline\nu$ of $\nu$ to
 $\hat R/I_{\hat R}$; this extension would have rank $\ge 3$ which is impossible by Abhyankar's inequality. Thus $I_{\hat R}=(0)$.

 \end{proof} 
 
 \begin{Remark} Nagata gives an example in the Appendix to \cite{Na} of a regular local ring $R$ of dimension two with an irreducible element $f\in R$ such that $f$ is not reduced in $\hat R$.
 \end{Remark}

\section{The Algorithm}

In this section, we will suppose that $R$ is a regular local ring of dimension two, with maximal ideal $\mathfrak m_R$ and residue field $\mathfrak k=R/\mathfrak m_R$.
For $f\in R$, let $\overline f$ or $[f]$ denote the residue of $f$ in $\mathfrak k$.
Suppose that $CS$ is a coefficient set of $R$.
A coefficient set of $R$ is a subset $CS$ of $R$ such that the mapping $CS\rightarrow \mathfrak k$ defined by $s\mapsto \overline s$ is a bijection.
We further require that $0\in CS$ and $1\in CS$.

\begin{Remark}\label{RemarkG4} Suppose that  $x,y$ are regular parameters in $R$, $a,b\in CS$  and $n\in 
\ZZ_+$. Let $c\in CS$ be defined by $\overline{a+b}=\overline c$. Then there exist
$e_{ij}\in CS$  such that 
$$
a+b=c+\sum_{i+j=1}^{n-1}e_{ij}x^iy^j+h
$$
with $h\in \mathfrak m_R^n$. Let $d\in CS$ be defined by $\overline{ab}=\overline d$. Then there exist $g_{ij}\in CS$ such that
$$
ab=d+\sum_{i+j=1}^{n-1}g_{ij} x_iy_j+h'
$$
with $h'\in \mathfrak m_R^n$.
\end{Remark}

\begin{Theorem}\label{Theorem1*} Suppose that  $\nu$ is a  valuation 
of the quotient field of $R$ dominating $R$. Let $L=V_{\nu}/m_{\nu}$ be the residue field of the valuation ring $V_{\nu}$ of $\nu$. For $f \in V_{\nu}$, let $[f]$ denote the class of $f$ in $L$. Suppose that $x,y$ are regular parameters in $R$.
Then  there exist $\Omega\in\ZZ_+\cup\{\infty\}$ and 
$P_i\in \mathfrak m_R$ for $i\in\ZZ_+$ with $i<\min\{\Omega+1,\infty\}$
 such that $P_0=x$, $P_1=y$ and for $1\le i<\Omega$, there is an expression
 \begin{equation}\label{eq11*} 
 P_{i+1} = P_i^{n_i}+\sum_{k=1}^{\lambda_i} c_kP_0^{\sigma_{i,0}(k)}P_1^{\sigma_{i,1}(k)}\cdots P_{i}^{\sigma_{i,i}(k)}
 \end{equation}
 with $n_i\ge 1$, $\lambda_i\ge 1$, 
 \begin{equation}\label{eq12*}
 0\ne c_k\in CS
 \end{equation}
  for $1\le k\le \lambda_i$,
 $\sigma_{i,s}(k)\in\NN$ for all $s,k$,  $0\le \sigma_{i,s}(k)<n_s$ for $s\ge 1$.
 Further,
 $$
 n_i\nu(P_i)=\nu(P_0^{\sigma_{i,0}(k)}P_1^{\sigma_{i,1}(k)}\cdots P_{i}^{\sigma_{i,i}(k)})
 $$
 for all $k$.
 
 For all $i\in\ZZ_+$ with $i<\Omega$, the following are true:
 \begin{enumerate}
 \item[1)] $\nu(P_{i+1})>n_i\nu(P_i)$.
 \item[2)] Suppose that $r\in\NN$, $m\in \ZZ_+$, $j_k(l)\in\NN$  for $1\le l\le m$ and $0\le j_k(l)<n_k$ for $1\le k\le r$ are such that  $(j_0(l),j_1(l),\ldots,j_r(l))$ are distinct for $1\le l\le m$, and 
 $$
 \nu(P_0^{j_0(l)}P_1^{j_1(l)}\cdots P_r^{j_r(l)})=\nu(P_0^{j_0(1)}\cdots P_r^{j_r(1)})
 $$
 for $1\le l\le m$.
 Then
 $$
 1,\left[\frac{P_0^{j_0(2)}P_1^{j_1(2)}\cdots P_r^{j_r(2)}}{P_0^{j_0(1)}P_1^{j_1(1)}\cdots P_r^{j_r(1)}}\right],
 \ldots,
 \left[\frac{P_0^{j_0(m)}P_1^{j_1(m)}\cdots P_r^{j_r(m)}}{P_0^{j_0(1)}P_1^{j_1(1)}\cdots P_r^{j_r(1)}}\right]
 $$
 are linearly independent over $\mathfrak k$.
 \item[3)] Let 
 $$
 \overline n_i=[G(\nu(P_0),\ldots, \nu(P_i)):G(\nu(P_0),\ldots, \nu(P_{i-1}))].
 $$
  Then $\overline n_i$ divides $\sigma_{i,i}(k)$ for all $k$ in (\ref{eq11*}). In particular, $n_i=\overline n_id_i$ with $d_i\in \ZZ_+$ 
 \item[4)] There exists $U_i=P_0^{w_0(i)}P_1^{w_1(i)}\cdots P_{i-1}^{w_{i-1}(i)}$ for $i\ge 1$ with $w_0(i),\ldots, w_{i-1}(i)\in\NN$ 
 and $0\le w_j(i)<n_j$ for $1\le j\le i-1$ such that
 $\nu(P_i^{\overline n_i})=\nu(U_i)$ and if 
 $$
 \alpha_i=\left[\frac{P_i^{\overline n_i}}{U_i}\right]
 $$
 then 
 $$
 b_{i,t}=\left[\sum_{\sigma_{i,i}(k)=t\overline n_i}c_k\frac{P_0^{\sigma_{i,0}(k)}P_1^{\sigma_{i,1}(k)}\cdots P_{i-1}^{\sigma_{i,i-1}(k)}} {U_i^{(d_i-t)}}\right]\in \mathfrak k(\alpha_1,\ldots,\alpha_{i-1})
 $$
 for $0\le t\le d_i-1$ and
 $$
 f_i(u)=u^{d_i}+b_{i,d_i-1}u^{d_i-1}+\cdots+b_{i,0}
 $$
 is the minimal polynomial of $\alpha_i$ over $\mathfrak k(\alpha_1,\ldots,\alpha_{i-1})$.
 
\end{enumerate}

The algorithm terminates with $\Omega<\infty$ if and only if either
\begin{equation}\label{eqL15}
\overline n_{\Omega}=[G(\nu(P_0),\ldots, \nu(P_{\Omega})):G(\nu(P_0),\ldots, \nu(P_{\Omega-1}))]=\infty
\end{equation}
or 
\begin{equation}\label{eqL10}
\begin{array}{l}
\mbox{$\overline n_{\Omega}<\infty$ (so that $\alpha_{\Omega}$ is defined as in 4)) and}\\
\mbox{$d_{\Omega}=[\mathfrak k(\alpha_1,\ldots,\alpha_{\Omega}):\mathfrak k(\alpha_1,\ldots,\alpha_{\Omega-1})]=\infty$.}
\end{array}
\end{equation}
If $\overline n_{\Omega}=\infty$, set $\alpha_{\Omega}=1$.

 \end{Theorem}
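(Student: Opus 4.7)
The plan is to prove this by strong induction on $i$, simultaneously constructing $P_{i+1}$ and verifying properties (1)--(4) at index $i$. The base case $i = 1$ with $P_0 = x$, $P_1 = y$ is essentially vacuous: property (1) has nothing yet to compare, properties (3)--(4) involve data that will only appear as we build upward, and property (2) at $r = 1$ reduces to a trivial single-monomial statement. The inductive step splits into a termination check followed, if the algorithm continues, by an explicit construction of $P_{i+1}$.

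For the construction at stage $i$, I first compute $\overline{n}_i$; if it is infinite, terminate as in (\ref{eqL15}). Otherwise $\overline{n}_i\nu(P_i) \in G(\nu(P_0),\ldots,\nu(P_{i-1}))$, and applying Lemma \ref{Lemma2} inductively---property (1) at lower indices supplies the required strict inequalities $\overline{n}_k\nu(P_k) < \nu(P_{k+1})$---yields an expansion $\overline{n}_i\nu(P_i) = \sum_{j=0}^{i-1}w_j(i)\nu(P_j)$ with $0 \le w_j(i) < n_j$ for $1 \le j \le i-1$. Set $U_i = P_0^{w_0(i)}\cdots P_{i-1}^{w_{i-1}(i)}$ and $\alpha_i = [P_i^{\overline{n}_i}/U_i] \in L$. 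If $\alpha_i$ is transcendental over $\mathfrak{k}(\alpha_1,\ldots,\alpha_{i-1})$, terminate as in (\ref{eqL10}); otherwise take its minimal polynomial $f_i(u) = u^{d_i} + b_{i,d_i-1}u^{d_i-1} + \cdots + b_{i,0}$, lift each coefficient $b_{i,t}$ to a $\mathfrak{k}$-linear combination of classes of ratios $P_0^{\sigma_0}\cdots P_{i-1}^{\sigma_{i-1}}/U_i^{d_i-t}$ of valuation zero satisfying $0 \le \sigma_s < n_s$ for $1 \le s \le i-1$, and define $P_{i+1}$ by (\ref{eq11*}) with $n_i = \overline{n}_i d_i$, multiplying each lifted monomial by $P_i^{t\overline{n}_i}$ so that $\sigma_{i,i}(k) = t\overline{n}_i$. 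Then property (3) is visible in the construction, property (4) is exactly the setup, and property (1) follows because modulo terms of strictly higher valuation, $P_{i+1}/U_i^{d_i}$ reduces to $f_i(\alpha_i) = 0$, forcing $\nu(P_{i+1}) > d_i\overline{n}_i\nu(P_i) = n_i\nu(P_i)$.

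The subtle point, and what I expect to be the main obstacle, is property (2)---the $\mathfrak{k}$-linear independence of classes of same-valuation monomial ratios, which is also what ensures that the lift of the $b_{i,t}$ above is well-posed. My approach is to show, as part of the same induction, that any monomial $P_0^{a_0}\cdots P_r^{a_r}$ of valuation zero with $0 \le a_k < n_k$ has image in $L$ equal to a unique product $\alpha_1^{e_1}\cdots\alpha_r^{e_r}$ with $0 \le e_k < d_k$, obtained by iterated Euclidean division $a_k = \overline{n}_k q_k + \rho_k$ to pull $P_k^{\overline{n}_k}$ factors down to $U_k\cdot\alpha_k$ and descend $k \to k-1$. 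Since the products $\alpha_1^{e_1}\cdots\alpha_r^{e_r}$ with $0 \le e_k < d_k$ form a $\mathfrak{k}$-basis of $\mathfrak{k}(\alpha_1,\ldots,\alpha_r)$ by multiplicativity of degree, distinct admissible tuples of equal valuation yield distinct basis elements, forcing the required linear independence. Making the bookkeeping in this descent precise---in particular, verifying that the substitution $a_k = \overline{n}_k q_k + \rho_k$ propagates the valuation constraint correctly to the lower-index tuple and that the newly defined $\alpha_i$ of degree $d_i$ extends the basis compatibly at the top---is the technical heart of the induction; the final termination statement (\ref{eqL15})--(\ref{eqL10}) is then a direct reading of the two places at which the construction breaks off.
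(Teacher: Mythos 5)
Your overall plan---simultaneous induction on the construction of $P_{i+1}$ together with properties (1)--(4), with termination read off from where the construction breaks down---matches the paper's strategy, which carries four inductive statements $A(i)$ (construction and valuation jump), $B(i)$ (residues of valuation-zero Laurent monomials lie in $\mathfrak{k}(\alpha_1,\ldots,\alpha_i)$), $C(i)$ (realizability of elements of $\mathfrak{k}(\alpha_1,\ldots,\alpha_i)$ as residues of sums of bounded monomials), and $D(i)$ (linear independence, your property (2)). However, there is a genuine gap in the argument you give for property (2), which you yourself identify as the technical heart.

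Your claim is that a Laurent monomial $P_0^{a_0}\cdots P_r^{a_r}$ of valuation zero with $0\le a_k<n_k$ for $k\ge 1$ has residue \emph{equal} to a unique product $\alpha_1^{e_1}\cdots\alpha_r^{e_r}$ with $0\le e_k<d_k$, obtained by iterated Euclidean division pulling down $P_k^{\overline n_k}$ to $U_k\alpha_k$ and descending in $k$. This is false. Pulling $P_r^{\overline n_r q_r}$ down to $\alpha_r^{q_r}$ multiplies the remaining monomial by $U_r^{q_r}=P_0^{q_rw_0(r)}\cdots P_{r-1}^{q_rw_{r-1}(r)}$, so the new $P_{r-1}$-exponent becomes $a_{r-1}+q_rw_{r-1}(r)$, which can be as large as roughly $d_rn_{r-1}$; dividing by $\overline n_{r-1}$ then produces an $\alpha_{r-1}$-exponent that can exceed $d_{r-1}$. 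Reducing that excess power of $\alpha_{r-1}$ via its minimal polynomial converts the residue from a \emph{monomial} in the $\alpha_k$ into a genuine $\mathfrak{k}$-linear \emph{combination} of monomials. (Concretely, with $\overline n_1=2$, $d_1=2$, $\overline n_2=1$, $d_2=2$, $w_1(2)=3$, $a_1=3$, $a_2=1$ one gets residue $\alpha_1^3\alpha_2=\bigl((b_{1,1}^2-b_{1,0})\alpha_1+b_{1,1}b_{1,0}\bigr)\alpha_2$, not a basis element.) So ``distinct admissible tuples of equal valuation yield distinct basis elements'' does not hold, and the linear independence cannot be read off directly.

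What is true---and what the paper exploits---is that the descent is clean \emph{only at the top level}: the $P_r$-exponent of a valuation-zero Laurent monomial is divisible by $\overline n_r$, and after passing to ratios the $\alpha_r$-exponents lie in a translate of $[0,d_r)$, which does stay a basis over $\mathfrak{k}(\alpha_1,\ldots,\alpha_{r-1})$. The paper proves $D(i+1)$ by contradiction: collect the putative dependence into $Q=\sum_sQ_sP_{i+1}^{\overline n_{i+1}s}$, divide by the monomial with the largest $P_{i+1}$-exponent, and obtain a polynomial relation $\sum_sc_s\alpha_{i+1}^s=0$ of degree $\phi<d_{i+1}$ with coefficients $c_s\in\mathfrak{k}(\alpha_1,\ldots,\alpha_i)$ (by $B(i)$) and $c_\phi\ne 0$ (by $D(i)$), contradicting minimality of $f_{i+1}$. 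Your approach can be repaired along the same lines---group by the top-level $\alpha_r$-exponent and recurse in $r$---but that is essentially the paper's $D(i)$ argument, not an alternative to it. A second, smaller omission: the lift of the coefficients $b_{i,t}$ into sums of monomials with $\sigma_s\in[0,n_s)$ and nonnegative $P_0$-exponent (the paper's $C(i)$) is asserted without justification; the lower bound $\nu(U_i^{d_i-t})\ge n_i\nu(P_i)>n_{i-1}\nu(P_{i-1})$ is exactly what makes Lemma \ref{Lemma2} yield a nonnegative $P_0$-exponent, and this should be stated and used.
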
 
  
 \begin{proof} Consider the following statements $A(i)$, $B(i)$, $C(i)$, $D(i)$ for $1\le i<\Omega$: 
  
  $$
\begin{array}{ll}
&\mbox{There exists $U_i=P_0^{w_0(i)}P_1^{w_1(i)}\cdots P_{i-1}^{w_{i-1}(i)}$ for some $w_j(i)\in\NN$}\\
&\mbox{and 
$0\le w_j(i)<n_j$ for $1\le j\le i-1$}\\
&\mbox{such that $\overline n_i\nu(P_i)=\nu(U_i)$. Let $\alpha_i=[\frac{P_i^{\overline n_i}}{U_i}]\in L$ and}\\
A(i)&\mbox{$f_i(u)=u^{d_i}+b_{i,d_i-1}u^{d_i-1}+\cdots +b_{i,0}\in \mathfrak k(\alpha_1,\ldots,\alpha_{i-1})[u]$}\\
&\mbox{be the minimal polynomial of $\alpha_i$.}\\
&\mbox{Let $d_i$ be the degree of $f_i(u)$, and $n_i=\overline n_id_i$. Then
there exist $a_{s,t}\in CS$ }\\
&\mbox{and $j_0(s,t), j_1(s,t),\ldots, j_{i-1}(s,t)\in \NN$ with $0\le j_k(s,t)<n_k$}\\
&\mbox{for $k\ge 1$ and $0\le t<\overline d_i$ such that}\\
&$$
\nu(P_0^{j_0(s,t)}P_1^{j_1(s,t)}\cdots P_{i-1}^{j_{i-1}(s,t)}P_{i}^{t\overline n_i})=\overline n_id_i\nu(P_i)
$$\\
&\mbox{for all $s,t$  and}
\end{array}
$$
\begin{equation}\label{eqM1}
\,\,\,\,\,\,\,\,\,\,P_{i+1}:=P_i^{\overline n_id_i}
+\sum_{t=0}^{d_i-1}\left(\sum_{s=1}^{\lambda_t}a_{s,t}P_0^{j_0(s,t)}P_1^{j_1(s,t)}\cdots P_{i-1}^{j_{i-1}(s,t)}\right)P_i^{t\overline n_i}
\end{equation}
$$
\begin{array}{ll}
&\mbox{ satisfies}\\ 
&\,\,\,\,\,\,\,\,\,\,b_{i,t}=\left[\sum_{s=1}^{\lambda_t}a_{s,t}\frac{P_0^{j_0(s,t)}P_1^{j_1(s,t)}\cdots P_{i-1}^{j_{i-1}(s,t)}}{U_i^{d_i-t}}\right]\\
&\mbox{ for $0\le t\le d_i-1$. In particular,} 
\end{array}
$$
\begin{equation}\label{eqM2}
\nu(P_{i+1})>n_i\nu(P_i).
\end{equation}

\vskip .2truein
$$
\begin{array}{ll}
B(i)&\mbox{ Suppose that $M$ is a Laurent monomial in $P_0,P_1,\ldots, P_i$ and $\nu(M)=0$. Then}\\
&\mbox{ there exist $s_i\in\ZZ$ such that }\\
&\,\,\,\,\,\,\,\,\,\, M=\prod_{j=1}^i\left[\frac{P_j^{\overline n_j}}{U_j}\right]^{s_j},\\
&\mbox{ so that}\\
&\,\,\,\,\,\,\,\,\,\, [M]\in \mathfrak k(\alpha_1,\ldots,\alpha_i).
\end{array}
$$
\vskip .2truein
$$
\begin{array}{ll}
&\mbox{ Suppose that $\lambda\in \mathfrak k(\alpha_1,\ldots,\alpha_i)$ and $N$ is a Laurent monomial}\\
&\mbox{ in $P_0,P_1,\ldots,P_i$
such that $\gamma=\nu(N)\ge n_i\nu(P_i)$. Then there exists}\\
C(i)&\,\,\,\,\,\,\,\,\,\,
G=\sum_jc_jP_0^{\tau_0(j)}P_1^{\tau_1(j)}\cdots P_i^{\tau_i(j)}\\
&\mbox{ with $\tau_0(j),\ldots, \tau_i(j)\in \NN$, $0\le \tau_k(j)<n_k$ for $1\le k\le i$ and $c_j\in CS$ such that}\\
&\,\,\,\,\,\,\,\,\,\,\nu(P_0^{\tau_0(j)}P_1^{\tau_1(j)}\cdots P_i^{\tau_i(j)})=\gamma\mbox{ for all $j$}\\
&\mbox{ and }\\
&\,\,\,\,\,\,\,\,\,\, \left[\frac{G}{N}\right]=\lambda.
\end{array}
$$
\vskip .2truein
$$
\begin{array}{ll}
&\mbox{ Suppose that  $m\in \ZZ_+$, $j_k(l)\in\NN$  for $1\le l\le m$ and $0\le j_k(l)<n_k$}\\
&\mbox{ for $1\le k\le i$ are such that the $(j_0(l),j_1(l),\ldots,j_i(l))$ are distinct for $1\le l\le m$, and }\\
 &
 \,\,\,\,\,\,\,\,\,\,\nu(P_0^{j_0(l)}P_1^{j_1(l)}\cdots P_i^{j_i(l)})=\nu(P_0^{j_0(1)}\cdots P_i^{j_i(1)})
 \\
 
 D(i)&\mbox{ for $1\le l\le m$.
 Then}\\
 &
 \,\,\,\,\,\,\,\,\,\,1,\left[\frac{P_0^{j_0(2)}P_1^{j_1(2)}\cdots P_i^{j_i(2)}}{P_0^{j_0(1)}P_1^{j_1(1)}\cdots P_i^{j_i(1)}}\right],
 \ldots,
 \left[\frac{P_0^{j_0(m)}P_1^{j_1(m)}\cdots P_i^{j_i(m)}}{P_0^{j_0(1)}P_1^{j_1(1)}\cdots P_i^{j_i(1)}}\right]
 \\
 &\mbox{are linearly independent over $\mathfrak k$.}
 \end{array}
 $$

We will leave the proofs of $A(1)$, $B(1)$, $C(1)$ and $D(1)$ to the reader, as they are  an easier variation of the following inductive statement, which we will prove. 

Assume that $i\ge 1$ and $A(i)$, $B(i)$, $C(i)$ and $D(i)$ are true. We will prove that $A(i+1)$, $B(i+1)$ and $C(i+1)$
and $D(i+1)$  are true. Let $\beta_j=\nu(P_j)$ for $0\le j\le i+1$.
By Lemma \ref{Lemma2}, there exists 
$U_{i+1}=P_0^{w_0(i)}P_1^{w_1(i)}\cdots P_{i}^{w_{i}(i)}$ for some $w_j(i)\in\NN$
such that  
$0\le w_j(i)<n_j$ for $1\le j\le i$ and 
$\nu(U_{i+1})=\overline n_{i+1}\beta_{i+1}$ (where $\overline n_{i+1} =[G(\beta_0,\ldots,\beta_{i+1}):G(\beta_0,\ldots, \beta_i)]$).

Let $f_{i+1}(u)$ be the minimal polynomial of 
$$
\alpha_{i+1}=\left[\frac{P_{i+1}^{\overline n_{i+1}}}{U_{i+1}}\right]
$$
 over $\mathfrak k(\alpha_1,\ldots,\alpha_i)$. Let $d=d_{i+1}=\deg f_{i+1}$.
Expand
$$
f_{i+1}(u)=u^d+b_{d-1}u^{d-1}+\cdots+b_0
$$
with $b_j\in \mathfrak k(\alpha_1,\ldots,\alpha_i)$. For $j\ge 1$,
$$
\nu(U_{i+1}^j)=j\overline n_{i+1}\beta_{i+1}\ge \beta_{i+1}>n_i\beta_i.
$$
In the inductive statement $C(i)$, take $N=U_{i+1}^{d-t}$ for $0\le t< d=d_{i+1}$, to obtain for $0\le t< d_{i+1}$,
\begin{equation}\label{eqF3}
G_t=\sum_{s=1}^{\lambda_t}a_{s,t}P_0^{j_0(s,t)}P_1^{j_1(s,t)}\cdots P_{i}^{j_{i}(s,t)}
\end{equation}
with $a_{s,t}\in CS$, $j_k(s,t)\in\NN$ and $0\le j_k(s,t)<n_k$ for $1\le k\le i$ such that
$$
\nu(G_t)=\nu(P_0^{j_0(s,t)}P_1^{j_1(s,t)}\cdots P_{i}^{j_{i}(s,t)})=(d-t)\overline n_{i+1}\beta_{i+1}
$$
for all $s,t$ and
\begin{equation}\label{eqF5}
\left[\frac{G_t}{U_{i+1}^{d-t}}\right]=b_t.
\end{equation}

Set
\begin{equation}\label{eqF1}
\begin{array}{lll}
P_{i+2}&=&P_{i+1}^{\overline n_{i+1}d_{i+1}}+G_{d-1}P_{i+1}^{\overline n_{i+1}(d_{i+1}-1)}+\cdots+G_{0}\\
&=& P_{i+1}^{\overline n_{i+1}d_{i+1}}+\sum_{t=0}^{d-1}
\sum_{s=1}^{\lambda_t}a_{s,t}P_0^{j_0(s,t)}P_1^{j_1(s,t)}\cdots P_{i}^{j_{i}(s,t)}P_{i+1}^{t\overline n_{i+1}}.
\end{array}
\end{equation}
We have established $A(i+1)$.
\vskip .2truein

Suppose $M$ is a Laurent polynomial in $P_0,P_1,\ldots, P_{i+1}$ and $\nu(M)=0$. We have a factorization
$$
M=P_0^{a_0}P_1^{a_1}\cdots P_i^{a_i}P_{i+1}^{a_{i+1}}
$$
with all $a_j\in\ZZ$. Thus $a_{i+1}\beta_{i+1}\in G(\beta_0,\ldots,\beta_i)$, so that $\overline n_{i+1}$ divides $a_{i+1}$.
Let $s=\frac{a_{i+1}}{\overline n_{i+1}}$. Then 
$$
M=U_{i+1}^s(P_0^{a_0}P_1^{a_1}\cdots P_i^{a_i})\left(\frac{P_{i+1}^{\overline n_{i+1}}}{U_{i+1}}\right)^s.
$$
Now $U_{i+1}^sP_0^{a_0}\cdots P_i^{a_i}$ is a Laurent monomial in $P_0,\ldots, P_i$ of value zero, so the validity of $B(i+1)$
follows from the inductive assumption $B(i)$. 
\vskip .2truein
We now establish $C(i+1)$. Suppose $\lambda\in \mathfrak k(\alpha_1,\ldots,\alpha_{i+1})$ and $N$ is a Laurent monomial in $P_0,P_1,\ldots, P_{i+1}$ such that $\gamma=\nu(N)\ge n_{i+1}\nu(P_{i+1})$. We have
$$
\gamma\ge n_{i+1}\beta_{i+1}=\overline n_{i+1}d_{i+1}\beta_{i+1}\ge \overline  n_{i+1}\beta_{i+1}.
$$
By Lemma \ref{Lemma2} there exist $r_0,r_1,\ldots,r_i,k\in\NN$  such that $0\le r_j<n_j$ for $1\le j\le i$ and $0\le k<\overline n_{i+1}$ such that 
$$
\overline N=P_0^{r_0}P_1^{r_1}\cdots P_i^{r_i}P_{i+1}^k
$$
satisfies $\nu(\overline N)=\gamma$. Let $\tilde N=P_0^{r_0}P_1^{r_1}\cdots P_i^{r_i}$, so that $\overline N=\tilde NP_{i+1}^k$.
Let $\tau=[\frac{N}{\overline N}]$. We have that $0\ne \tau\in \mathfrak k(\alpha_1,\ldots,\alpha_{i+1})$ by $B(i+1)$.

Suppose $0\le j\le d_{i+1}-1$. Then
\begin{equation}\label{eqF2}
\begin{array}{lll}
\nu\left(\frac{\tilde N}{U_{i+1}^j}\right)&=&\nu(\tilde N)-j\nu(U_{i+1})\\
&\ge& \gamma-(\overline n_{i+1}-1)\beta_{i+1}-(d_{i+1}-1)\overline n_{i+1}\beta_{i+1}\\
&\ge & \overline n_{i+1}d_{i+1}\beta_{i+1}-\overline n_{i+1}\beta_{i+1}+\beta_{i+1}
-d_{i+1}\overline n_{i+1}\beta_{i+1}+\overline n_{i+1}\beta_{i+1}\\
&\ge& \beta_{i+1}>n_i\beta_i.
\end{array}
\end{equation}
Write
$$
\tau\lambda=e_0+e_1\alpha_{i+1}+\cdots+e_{d_{i+1}-1}\alpha_{i+1}^{d_{i+1}-1}
$$
with $e_j\in \mathfrak k(\alpha_1,\ldots,\alpha_i)$. By the inductive statement $C(i)$ and (\ref{eqF2}), there exist for $0\le j\le d_{i+1}-1$
$$
H_j=\sum_kc_{k,j}P_0^{\delta_0(k,j)}P_1^{\delta_1(k,j)}\cdots P_i^{\delta_i(k,j)}
$$
with $\delta_0(k,j),\delta_1(k,j),\ldots,\delta_i(k,j)\in \NN$, $0\le \delta_l(k,j)<n_l$ for $1\le l$ and $c_{k,j}\in CS$
for all $k,j$  such that
$$
\nu(P_0^{\delta_0(k,j)}P_1^{\delta_1(k,j)}\cdots P_i^{\delta_i(k,j)})=\nu\left(\frac{\tilde N}{U_{i+1}^j}\right)
$$
for all $j,k$ and
$$
\left[\frac{H_j}{\left(\frac{\tilde N}{U_{i+1}^j}\right)}\right]=e_j
$$
for all $j$.
Set
$$
G= H_0P_{i+1}^k+H_1P_{i+1}^{\overline n_{i+1}+k}+\cdots +H_{d_{i+1}-1}P_{i+1}^{\overline n_{i+1}(d_{i+1}-1)+k}.
$$
We have
$$
\overline n_{i+1}(d_{i+1}-1)+k<\overline n_{i+1}(d_{i+1}-1)+\overline n_{i+1}\le \overline n_{i+1}d_{i+1}=n_{i+1},
$$
and
$$
\frac{G}{\overline N}=\frac{H_0}{\tilde N}+\left(\frac{H_1U_{i+1}}{\tilde N}\right)\left(\frac{P_{i+1}^{\overline n_{i+1}}}{U_{i+1}}\right)
+\cdots+ \left(\frac{H_{d_{i+1}-1}U_{i+1}^{d_{i+1}-1}}{\tilde N}\right)\left(\frac{P_{i+1}^{\overline n_{i+1}}}{U_{i+1}}\right)^{d_{i+1}-1}.
$$
We have
$$
\left[\frac{G}{\overline N}\right]=e_0+e_1\alpha_{i+1}+\cdots+e_{d_{i+1}-1}\alpha_{i+1}^{d_{i+1}-1}=\tau\lambda.
$$
Thus
$$
\left[\frac{G}{N}\right] =\left[\frac{G}{\overline N}\right]\left[\frac{\overline N}{N}\right]=\tau\lambda\tau^{-1}=\lambda.
$$
We have established $C(i+1)$.
\vskip .2truein

Suppose that $D(i+1)$ is not true. We will obtain a contradiction. Under the assumption that $D(i+1)$ is not true, 
there exists 
 $m\in \ZZ_+$, $j_k(l)\in\NN$  for $1\le l\le m$ with $0\le j_k(l)<n_k$
for $1\le k\le i+1$  such that  $(j_0(l),j_1(l),\ldots,j_{i+1}(l))$ are distinct for $1\le l\le m$, and 
 $$
 \nu(P_0^{j_0(l)}P_1^{j_1(l)}\cdots P_{i+1}^{j_{i+1}(l)})=\nu(P_0^{j_0(1)}P_1^{j_0(1)}\cdots P_{i+1}^{j_{i+1}(1)})
 $$
  for $1\le l\le m$
  and $\tilde a_{l}\in \mathfrak k$ for $1\le l\le m$ not all zero such that
 $$
 \tilde a_1+\tilde a_2\left[\frac{P_0^{j_0(2)}P_1^{j_1(2)}\cdots P_{i+1}^{j_{i+1}(2)}}{P_0^{j_0(1)}P_1^{j_1(1)}\cdots P_{i+1}^{j_{i+1}(1)}}\right]+
 \cdots+
 \tilde a_m\left[\frac{P_0^{j_0(m)}P_1^{j_1(m)}\cdots P_i^{j_i(m)}}{P_0^{j_0(1)}P_1^{j_1(1)}\cdots P_{i+1}^{j_{i+1}(1)}}\right]=0.
 $$
 
 $(j_{i+1}(l)-j_{i+1}(1))\beta_{i+1}\in G(\beta_0,\ldots,\beta_i)$ for $1\le l\le m$, so $\overline n_{i+1}$ divides $(j_{i+1}(l)-j_{i+1}(1))$ for all $l$. Thus after possibly dividing all monomials $P_0^{j_0(l)}P_1^{j_1(l)}\cdots P_{i+1}^{j_{i+1}(l)}$ by a common power of $P_{i+1}$, we may assume that 
 \begin{equation}\label{eqH1}
 \overline n_{i+1}\mbox{ divides }j_{i+1}(l)\mbox{ for all $l$.} 
 \end{equation}
 After possibly reindexing the $P_0^{j_0(l)}P_1^{j_1(l)}\cdots P_{i+1}^{j_{i+1}(l)}$, we may assume that $j_{i+1}(1)=\overline n_{i+1}\phi$ is the largest value of $j_{i+1}(l)$.

For $1\le l\le m$, define $a_l\in CS$ by $\overline a_l=\tilde a_l$. Let
$$
Q=\sum_{l=1}^ma_lP_0^{j_0(l)}P_1^{j_1(l)}\cdots P_{i+1}^{j_{i+1}(l)}.
$$
 Let 
$$ 
 Q_s=\sum_{j_{i+1}(l)=s\overline n_i}a_lP_0^{j_0(l)}P_1^{j_1(l)}\cdots P_i^{j_i(l)}
 $$
 for $0\le s\le \phi$. Then
 \begin{equation}\label{eqH2}
 Q=\sum_{s=0}^{\phi}Q_sP_{i+1}^{\overline n_{i+1}s}.
 \end{equation}
 Let
 $$
 c_s=\left[\frac{Q_s}{P_0^{j_0(1)}P_1^{j_1(1)}\cdots P_i^{j_i(1)}U_{i+1}^{(\phi-s)}}\right]\in \mathfrak k(\alpha_1,\ldots\alpha_i)
 $$
 by $B(i)$. We further have that $c_{\phi}\ne 0$ by $D(i)$ since the monomials are all distinct.
 
 Dividing $Q$ by $P_0^{j_0(1)}P_1^{j_1(1)}\cdots P_i^{j_i(1)}U_{i+1}^{\phi}$, we have
 $$
 0=\sum_{s=0}^{\phi}c_s\alpha_{i+1}^s.
 $$
 Thus the minimal polynomial $f_{i+1}(u)$ of $\alpha_{i+1}$ divides $g(u)=\sum_{s=0}^\phi c_su^s$ in $\mathfrak k(\alpha_1,\ldots,\alpha_i)[u]$. But then $\phi\ge d_{i+1}$, so that $j_{i+1}(1)=\overline n_{i+1}\phi\ge n_{i+1}$, a contradiction.
 
\end{proof}

 \begin{Remark}\label{RemarkH1}  Theorem \ref{Theorem1*} can be stated without recourse to a coefficient set.
 To give this statement (which has the same proof) (\ref{eq12*}) must be replaced with ``$c_k$ are units in $R$ for $1\le k\le \lambda_i$''. In the proof, the statement ``$a_{s,t}\in CS$'' in $A(i)$ must be replaced with ``$a_{s,t}$ units in $R$ or $a_{s,t}=0$''.
 The statement ``$c_j\in CS$'' in $C(i)$ must be replaced with ``$c_j$ is a unit in $R$ or $c_j=0$''.
 \end{Remark}
 
 \begin{Remark}\label{RemarkH10} For $i>0$, there is an expression
 $$
 P_{i+1}=y^{n_1\cdots n_i}+x\Theta_{i+1}
 $$
 with $\Theta_{i+1}\in R$. This follows by considering the expression (\ref{eq11*}) and the various constraints on the values of the terms of the monomials in this expression.
 \end{Remark}

\begin{Remark}\label{RemarkH2} The algorithm of Theorem \ref{Theorem1*} concludes with $\Omega<\infty$ if and only if
$\nu(P_{\Omega})\not\in \QQ\nu(x)$ (so that $\mbox{rank}(\nu)=2$) or $\nu$ is discrete of rank 1 with 
$\mbox{trdeg}_{R/\mathfrak m_R}V_{\nu}/\mathfrak m_{\nu}=1$ (so that $\nu$ is  divisorial).
\end{Remark}

\begin{proof} From  Theorem \ref{Theorem1*}, we see that the algorithm terminates with $\Omega<\infty$ if and
only if either
$$
[G(\nu(P_0),\ldots,\nu(P_{\Omega})):G(\nu(P_0),\ldots,\nu(P_{\Omega-1}))]=\infty
$$
or
$$
[G(\nu(P_0),\ldots,\nu(P_{\Omega})):G(\nu(P_0),\ldots,\nu(P_{\Omega-1}))]<\infty\mbox{ and }[\mathfrak k(\alpha_1,\ldots,\alpha_{\Omega}):\mathfrak k(\alpha_1,\ldots,\alpha_{\Omega-1})]=\infty.
$$
\end{proof}

\begin{Remark}\label{Remark2} Suppose that $\Omega=\infty$ and $n_i=1$ for $i\gg 0$
in the conclusions of Theorem \ref{Theorem1*}. Then $\nu$ is discrete, and $V_{\nu}/\mathfrak m_{\nu}$ is finite over $\mathfrak k$.
\end{Remark}

\begin{proof} We first deduce a consequence of the assumption that $\Omega=\infty$ and $n_i=1$ for $i\gg 0$.
There exists $i_0\in\ZZ_+$ such that $n_i=1$
for all $i\ge i_0$. Thus for $i \ge i_0$,  $P_{i+1}$ is the sum of $P_{i}$ and a $\mathfrak k$-linear combination of monomials $M$ in $x$
and the finitely many $P_j$ with $j<i_0$, and with $\nu(M)=\nu(P_i)$.  We see that the $P_i$ form a Cauchy sequence in $\hat R$ 
 whose limit $f$ in $\hat R$ is nonzero (by Remark \ref{RemarkH10}), and such that 
$\mbox{lim}_{i \rightarrow \infty}\nu(P_i)=\infty$. 

Thus $I_{\hat R}\ne (0)$, $\nu$ is discrete and $V_{\nu}/\mathfrak m_{\nu}$ is finite over $\mathfrak k$ by the proof of Proposition \ref{Prop17}.

\end{proof}

\begin{Remark}\label{RemarkH5} 
Suppose that  $V_{\nu}/\mathfrak m_{\nu}= R/\mathfrak m_R$ in the hypotheses of Theorem \ref{Theorem1*} (so that there
is no residue field extension). Then the $P_i$ constructed by the algorithm are binomials for $i\ge 2$; (\ref{eq11*}) becomes
$$
P_{i+1}=P_i^{\overline n_i}+cU_i=P_i^{\overline n_i}+cP_0^{w_0(i)}\cdots P_{i-1}^{w_{i-1}(i)}
$$
for some $0\ne c\in CS$.
\end{Remark}

\begin{Example} There exists a rank 2 valuation $\nu$ dominating $R=\mathfrak k[x,y]_{(x,y)}$ such that the set
$$
\{\nu(P_0),\nu(P_1),\nu(P_2),\ldots\}
$$
 does not generate the semigroup $S^{R}(\nu)$.
\end{Example}

\begin{proof} Suppose that $\mathfrak k$ is a field  of characteristic zero.
We define a rank 2 valuation $\hat\nu$ on $\mathfrak k [[x,y]]$. Let $g(x,y)=y-x\sqrt{x+1}$. For $0\ne f(x,y)\in \mathfrak k [[x,y]]$, we have a factorization $f=g^nh$ where $n\in\NN$ and $g\not\,\mid h$. The rule
$$
\hat \nu(f)=(n,\mbox{ord}(h(x,x\sqrt{1+x})))\in(\ZZ^2)_{\rm lex}
$$
then defines a rank 2 valuation dominating  $\mathfrak k[[x,y]]$ with value group $(\ZZ^2)_{\rm lex}$.

We have that $(g)\cap \mathfrak k [x,y]=(y^2-x^2-x^3)$. Thus $\hat\nu$ restricts to a rank 2 valuation $\nu$  which  dominates the maximal ideal $\mathfrak n=(x,y)$ of $\mathfrak k[x,y]$. 
Expand
 $$
 x\sqrt{1+x}=\sum_{j\ge 1}a_ix^j=x+\frac{1}{2}x^2-\frac{1}{8}x^3+\cdots
$$
as a series with all $a_j\in \mathfrak k$ non zero.
Applying the algorithm of Theorem \ref{Theorem1*}, we construct the infinite sequence of polynomials 
$P_1,P_2,\cdots$ where $P_0=x$,
$P_1=y$ and $P_i=y-\sum_{j=1}^{i-1}a_ix^i$ for $i\ge 2$. 
We have that $\nu(P_i)=(0,i)$ for $i\ge 0$. 
However, $\nu(y^2-x^2-x^3)=(1,1)$.

Thus the set $\{\nu(x),\nu(P_1),\nu(P_2),\ldots\}$ does not generate the semigroup $S^{R}(\nu)$.

\end{proof}

\begin{Lemma}\label{LemmaG10} 
Suppose that $\nu$ is a valuation dominating $R$.
Let 
$$
P_0=x, P_1=y, P_2,\ldots
$$
be the sequence of elements of $R$ constructed by  Theorem \ref{Theorem1*}. Set
$\beta_i=\nu(P_i)$ for $i\ge 0$. 
Suppose that $P_0^{m_0}P_1^{m_1}\cdots P_r^{m_r}$ is a monomial in $P_0,\ldots,P_r$ and $m_i\ge n_i$ for some $i\ge 1$. Let $\rho=\nu(P_0^{m_0}P_1^{m_1}\cdots P_r^{m_r})$. Then with the notation of (\ref{eqM1}),
\begin{equation}\label{eqW1}
\begin{array}{lll}
P_0^{m_0}\cdots P_r^{m_r}&=&
-\sum_{t=0}^{d_i-1}\sum_{s=1}^{\lambda_t}a_{s,t}P_0^{m_0+j_0(s,t)}\cdots P_{i-1}^{m_{i-1}+j_{i-1}(s,t)}
P_i^{m_i-n_i+t\overline n_i}P_{i+1}^{m_{i+1}}\cdots P_r^{m_r}\\
&&+P_0^{m_0}\cdots P_i^{m_i-n_i}P_{i+1}^{m_{i+1}+1}\cdots P_r^{m_r}.
\end{array}
\end{equation}
All terms in the first sum of (\ref{eqW1}) have value $\rho$ and $\nu(P_0^{m_0}\cdots P_i^{m_i-n_i}P_{i+1}^{m_{i+1}+1}\cdots P_r^{m_r})>\rho$.

Suppose that $W$ is a Laurent monomial in $P_0,\ldots, P_r$ such that $\nu(W)=\rho$. Then
\begin{equation}\label{eqW2}
\left[\frac{P_0^{m_0}P_1^{m_1}\cdots P_r^{m_r}}{W}\right]
=-\sum_{t=0}^{d_i-1}\sum_{s=1}^{\lambda_t}\overline a_{s,t}\left[
\frac{P_0^{m_0+j_0(s,t)}\cdots P_{i-1}^{m_{i-1}+j_{i-1}(s,t)}P_i^{m_i-n_i+\overline n_it}P_{i+1}^{m_{i+1}}\cdots P_r^{m_r}}{W}\right]
\end{equation}
and
\begin{equation}\label{eqW3}
\begin{array}{l}
(m_0+j_0(s,t))+\cdots+(m_{i-1}+j_{i-1}(s,t))+(m_i-n_i+t\overline n_i)+m_{i+1}+\cdots+m_r\\
>m_0+m_1+\cdots+m_r
\end{array}
\end{equation}
for all terms in the first sum of (\ref{eqW1}).
\end{Lemma}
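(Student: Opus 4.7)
The plan is to start from the defining relation (\ref{eqM1}) for $P_{i+1}$, rearrange it to solve for $P_i^{n_i}$, and then multiply by the remaining monomial factor $P_0^{m_0}\cdots P_{i-1}^{m_{i-1}}P_i^{m_i-n_i}P_{i+1}^{m_{i+1}}\cdots P_r^{m_r}$. Specifically, (\ref{eqM1}) reads
$$
P_{i+1}=P_i^{n_i}+\sum_{t=0}^{d_i-1}\sum_{s=1}^{\lambda_t}a_{s,t}P_0^{j_0(s,t)}\cdots P_{i-1}^{j_{i-1}(s,t)}P_i^{t\overline n_i},
$$
so that $P_i^{n_i}=-\sum_{t,s}a_{s,t}P_0^{j_0(s,t)}\cdots P_{i-1}^{j_{i-1}(s,t)}P_i^{t\overline n_i}+P_{i+1}$. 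Multiplying through by $P_0^{m_0}\cdots P_i^{m_i-n_i}P_{i+1}^{m_{i+1}}\cdots P_r^{m_r}$ (which is legitimate since $m_i\ge n_i$) produces exactly the identity (\ref{eqW1}).

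For the valuation statements, the equalities $n_i\nu(P_i)=\nu(P_0^{j_0(s,t)}\cdots P_{i-1}^{j_{i-1}(s,t)}P_i^{t\overline n_i})$ are built into $A(i)$ from the proof of Theorem \ref{Theorem1*}, so every term in the first sum of (\ref{eqW1}) has valuation $\rho$. The final term satisfies
$$
\nu(P_0^{m_0}\cdots P_i^{m_i-n_i}P_{i+1}^{m_{i+1}+1}\cdots P_r^{m_r})=\rho-n_i\nu(P_i)+\nu(P_{i+1})>\rho
$$
by part 1) of Theorem \ref{Theorem1*}. Dividing (\ref{eqW1}) by $W$ and passing to residue classes in $L$ then yields (\ref{eqW2}), because the one extra term has strictly larger valuation than $\nu(W)=\rho$ and hence vanishes in the residue. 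The residues $\overline a_{s,t}$ appear because $a_{s,t}\in CS$.

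The only point needing genuine argument is the degree inequality (\ref{eqW3}); after cancellation it reduces to
$$
j_0(s,t)+j_1(s,t)+\cdots+j_{i-1}(s,t)+t\overline n_i>n_i.
$$
Here I would use that $\nu(P_k)$ is strictly increasing in $k$ (indeed $\nu(P_{k+1})>n_k\nu(P_k)\ge\nu(P_k)$ by part 1)), so $\nu(P_k)<\nu(P_i)$ for every $k<i$. If all $j_k(s,t)=0$, the value equation $\sum_k j_k(s,t)\nu(P_k)=\overline n_i(d_i-t)\nu(P_i)$ would force $t=d_i$, contradicting $t<d_i$; otherwise the strict inequality $\sum_k j_k(s,t)\nu(P_k)<\bigl(\sum_k j_k(s,t)\bigr)\nu(P_i)$ combined with that value equation yields $\sum_k j_k(s,t)>\overline n_i(d_i-t)$, and adding $t\overline n_i$ to both sides gives the required strict inequality. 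I expect this strict monotonicity argument (rather than the bookkeeping of multiplying out (\ref{eqM1})) to be the most delicate step, because it is the only place where the arithmetic of the valuations $\nu(P_k)$ truly enters the bound on exponent sums.
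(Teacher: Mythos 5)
Your proof is correct, and parts (\ref{eqW1}) and (\ref{eqW3}) follow the paper's argument closely (for (\ref{eqW3}) you make the edge case ``all $j_k(s,t)=0$'' explicit, which the paper leaves implicit; both versions hinge on $\beta_k<\beta_i$ for $k<i$, which comes from part 1) of Theorem \ref{Theorem1*}).

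The genuine divergence is in (\ref{eqW2}). You divide the $R$-identity (\ref{eqW1}) by $W$ and apply the residue map $V_\nu\to L=V_\nu/\mathfrak m_\nu$, using that it is a ring homomorphism and kills the extra term of value $>\rho$. That is clean and valid for the Lemma as stated. The paper instead ignores (\ref{eqW1}) entirely here and computes the right-hand side of (\ref{eqW2}) directly, factoring out $U_i^{d_i}$ and collapsing the inner sum via the minimal polynomial identity $\alpha_i^{d_i}=-\sum_{t=0}^{d_i-1}b_{i,t}\alpha_i^t$. The reason for the longer route becomes apparent in Section \ref{Proof1}: there the same calculation is reused with $\nu$ only a function on Laurent monomials and $[\,\cdot\,]$ replaced by a formal function $\mathrm{res}$, \emph{before} $\nu$ is known to be a valuation and before any residue ring exists. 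Your shortcut assumes a ring homomorphism $V_\nu\to L$ and so would not transfer to that setting, whereas the paper's explicit minimal-polynomial computation does. So: your proof is correct and arguably simpler for the Lemma in isolation, but the paper's version is deliberately phrased so it can be recycled where no valuation ring is yet available.
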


\begin{proof} We have
$$
P_0^{m_0}\cdots P_r^{m_r}=P_0^{m_0}\cdots P_i^{n_i}P_i^{m_i-n_i}\cdots P_r^{m_r}
$$
where $m_i-n_i\ge 0$. Substituting (\ref{eqM1}) for $P_i^{n_i}$, we obtain
equation (\ref{eqW1}).
We compute, from the first term of (\ref{eqW1}),
$$
\begin{array}{l}
-\sum_{t=1}^{d_i-1}\sum_{s=1}^{\lambda_t}\overline a_{s,t}\left[
\frac{P_0^{m_0+j_0(s,t)}\cdots P_r^{m_r}}{W}\right]\\
=-\left[\frac{P_0^{m_0}\cdots P_i^{m_i-n_i}\cdots P_r^{m_r}U_i^{d_i}}{W}\right]
\left(\sum_{t=0}^{d_i-1}\sum_{s=1}^{\lambda_t}\overline a_{s,t}\left[\frac{P_0^{j_0(s,t)}\cdots P_{i-1}^{j_{i-1}(s,t)}}
{U_i^{d_i-t}}\right]\left[\frac{P_i^{\overline n_i}}{U_i}\right]^t\right)\\
\\
=-\left[\frac{P_0^{m_0}\cdots P_i^{m_i-n_i}\cdots P_r^{m_r}U_i^{d_i}}{W}\right]\left(\sum_{t=0}^{d_i-1}b_{i,t}\alpha_i^t\right)\\
=\left[\frac{P_0^{m_0}\cdots P_i^{m_i-n_i}\cdots P_r^{m_r}U_i^{d_i}}{W}\right]\alpha_i^{d_i}\\
=\left[\frac{P_0^{m_0}\cdots P_i^{m_i-n_i}\cdots P_r^{m_r}U_i^{d_i}}{W}\right]\left[\frac{P_i^{\overline n_i}}{U_i}\right]^{d_i}\\
=\left[\frac{P_0^{m_0}\cdots P_i^{m_i}\cdots P_r^{m_r}}{W}\right],
\end{array}
$$
giving (\ref{eqW2}).  For all $s,t$ (with $0\le t\le d_i-1$),
$$
\begin{array}{lll}
n_i\beta_i&=&j_0(s,t)\beta_0+j_1(s,t)\beta_1+\cdots+j_{i-1}(s,t)\beta_{i-1}+\overline n_it\beta_i\\
&<& \left( j_0(s,t)+j_1(s,t)+\cdots +j_{i-1}(s,t)+\overline n_it\right)\beta_i
\end{array}
$$
so
$$
n_i<j_0(s,t)+j_1(s,t)+\cdots +j_{i-1}(s,t)+\overline n_it.
$$
(\ref{eqW3}) follows.

\end{proof}

\begin{Theorem}\label{TheoremG2} 
Suppose that $\nu$ is a valuation dominating $R$.
Let 
$$
P_0=x, P_1=y, P_2,\ldots
$$
be the sequence of elements of $R$ constructed by  Theorem \ref{Theorem1*}. Set
$\beta_i=\nu(P_i)$ for $i\ge 0$. Suppose that $f\in R$  and there exists $n\in\ZZ_+$ such that $\nu(f)<n\nu(\mathfrak m_R)$. 
Then there exists an expansion 
$$
f=\sum_{I}a_IP_0^{i_0}P_1^{i_1}\cdots P_r^{i_r}+\sum_J\phi_JP_0^{j_0}\cdots P_r^{j_r}+h
$$
where $r\in\NN$, $a_{I}\in CS$, $I,J\in \NN^{r+1}$, $\nu(P_0^{i_0}P_1^{i_1}\cdots P_r^{i_r})=\nu(f)$ for all $I$ in the first sum,  $0\le i_k<n_k$ for $1\le k\le r$, $\nu(P_0^{j_0}\cdots P_r^{j_r})>\nu(f)$ for all terms in the second sum, $\phi_J\in R$ and $h\in \mathfrak m_R^n$.

The first sum is uniquely determined by these conditions.
\end{Theorem}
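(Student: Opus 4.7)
The plan is to prove existence by iteratively rewriting $f$ as a sum of monomials in the $P_i$'s using the identity (\ref{eqW1}) of Lemma \ref{LemmaG10}, then to show the low-value terms cancel using the linear independence statement $D(r)$ from the proof of Theorem \ref{Theorem1*}; uniqueness of the first sum follows from the same $D(r)$. First I expand $f$ as an $\mathfrak m_R$-adic Taylor polynomial with coefficient-set coefficients,
$$
f = \sum_{a+b<n} c_{ab}\, P_0^a P_1^b + h_0,\qquad c_{ab}\in CS,\ h_0\in \mathfrak m_R^n,
$$
and maintain the invariant $f = \sum_i c_i M_i + h$ throughout the reduction, where the $M_i$ are monomials in the $P_j$'s with $c_i\in CS$ and $h\in\mathfrak m_R^n$.

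Next, for each non-standard $M_i$ (some $P_j$-exponent with $j\ge 1$ is $\ge n_j$), apply (\ref{eqW1}). By (\ref{eqW3}) the ``same value'' children have strictly larger total $P$-degree, while the one ``higher value'' child has strictly larger $\nu$-value. Any child whose total $P$-degree is $\ge n$ lies in $\mathfrak m_R^n$ (since each $P_j\in\mathfrak m_R$) and is absorbed into $h$. Combine coefficients using Remark \ref{RemarkG4}, which expresses $c_1+c_2$ or $c_1 c_2$ as an element of $CS$ plus a remainder in $\mathfrak m_R$ times the monomial; that remainder contributes a strictly higher $\nu$-value term and is treated as a further child. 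Termination is achieved by transfinite recursion on the value $\gamma$: processing monomials of value $\gamma$ first (ranging over the well-ordered set $S^R(\nu)\cap[0,n\nu(\mathfrak m_R))$), each reduction at level $\gamma$ strictly raises total $P$-degree, so after at most $n$ steps per ancestor every $\gamma$-level monomial is standard or in $\mathfrak m_R^n$; the remaining children live at values $\gamma'>\gamma$ and are handled in subsequent stages.

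After termination $f = \sum_{I,\gamma} a_{I,\gamma} P^I + h$ with each $P^I$ a standard monomial, $\gamma = \nu(P^I)$, $a_{I,\gamma}\in CS$ and $h\in\mathfrak m_R^n$. By $D(r)$, distinct standard monomials of the same value $\gamma$ have $\mathfrak k$-linearly independent residues in $\P_\gamma(R)/\P_\gamma^+(R)$, so any nonzero $CS$-linear combination of them has $\nu$-value exactly $\gamma$. Hence $\nu(\sum_{I,\gamma} a_{I,\gamma} P^I)$ equals the smallest $\gamma$ for which some $a_{I,\gamma}\ne 0$. Since this sum equals $f-h$ and $\nu(h)\ge n\nu(\mathfrak m_R)>\nu(f)=\gamma_0$, that smallest $\gamma$ must equal $\gamma_0$, forcing $a_{I,\gamma}=0$ for all $\gamma<\gamma_0$. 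Splitting at $\gamma=\gamma_0$ gives the first sum (value $\gamma_0$) and the second sum (value $>\gamma_0$, with $\phi_J:=a_{J,\gamma}\in CS\subset R$).

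For uniqueness of the first sum, if $f=\sum a_I P^I + g + h = \sum a'_I P^I + g' + h'$ are two valid expansions then $\sum (a_I-a'_I)P^I = (g'-g)+(h'-h)$ has $\nu$-value $>\gamma_0$, so its image in $\P_{\gamma_0}(R)/\P_{\gamma_0}^+(R)$ is $\sum \overline{a_I-a'_I}\,[P^I]=0$; by $D(r)$ the $[P^I]$ are $\mathfrak k$-linearly independent, so $\overline{a_I}=\overline{a'_I}$ in $\mathfrak k$, and the bijection $CS\to\mathfrak k$ forces $a_I=a'_I$. The main obstacle I expect is the termination bookkeeping, in particular threading the $CS$-coefficient combinations of Remark \ref{RemarkG4} through the reduction in a way that the transfinite descent on value remains genuinely well-founded.
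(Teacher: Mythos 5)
Your argument is essentially the paper's: reduce non-standard monomials via (\ref{eqW1}), use (\ref{eqW3}) to drive the reduction, and invoke $D(r)$ (equivalently statement 2) of Theorem \ref{Theorem1*}) both to identify $\nu(f)$ as the first level where the standard sum is nonzero and to get uniqueness. The worry you flag at the end about well-foundedness is resolved by the observation, implicit in the paper's proof, that the recursion is genuinely finite rather than transfinite: any monomial in the $P_i$ of total degree $\ge n$ lies in $\mathfrak m_R^n$, and since the $\nu(P_i)$ are strictly increasing with $\nu(P_i)\ge {\rm ord}(P_i)\nu(\mathfrak m_R)\to\infty$, only finitely many $P_i$ can appear with positive exponent in a monomial of total degree $<n$, so only finitely many monomials (hence only finitely many value levels) ever need processing before the first sum stabilizes at value $\nu(f)$; the paper packages exactly this as a lexicographic induction on the pair $(\rho,\sigma)$ (minimal value, then minimal total degree of a non-standard term counted with multiplicity) rather than your level-by-level sweep, but the content is the same.
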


\begin{proof} We first prove existence. 
We  have an expansion
$$
f=\sum a_{i_0,i_1}x^{i_0}y^{i_1}+h_0
$$
with $a_{i_0,i_1}\in CS$ and $h_0\in \mathfrak m_R^n$. 
More generally, suppose that we have an expansion
\begin{equation}\label{eqG5}
f=\sum a_IP_0^{i_0}P_1^{i_1}\cdots P_r^{i_r}+h
\end{equation}
for some $r\in \ZZ_+$, $I=(i_0,\ldots,i_r)\in\NN^{r+1}$, $a_I\in CS$ and $h\in \mathfrak m_R^n$.
Let 
$$
\rho=\mbox{min}\{\nu(P_0^{i_0}P_1^{i_1}\cdots P_r^{i_r})\mid a_I\ne 0\}.
$$
We can rewrite (\ref{eqG5}) as 
\begin{equation}\label{eqG6}
f=\sum_J a_JP_0^{j_0}P_1^{j_1}\cdots P_r^{j_r}+\sum_{J'}a_{J'}P_0^{j_0'}P_1^{j_1'}\cdots P_r^{j_r'} +h
\end{equation}
where the terms in the first  sum have minimal value $\nu(P_0^{j_0}P_1^{j_1}\cdots P_r^{j_r})=\rho$ and the nonzero terms in the second 
sum have value $\nu(P_0^{j_0'}P_1^{j_1'}\cdots P_r^{j_r'})>\rho$.

 If we have that the first sum is nonzero   and $0\le j_k<n_k$ for $1\le k\le r$ for all terms in the first sum of (\ref{eqG6}) then $\rho =\nu(f)$ and we have achieved the conclusions of the theorem. So suppose that one of these conditions fails.

First suppose that $\sum_Ja_JP_0^{j_0}\cdots P_r^{j_r}\ne 0$ and for some $J$, $j_i\ge n_i$ for some $i\ge 1$.
Let 
$$
a=\min\{j_0+\cdots+j_r\mid j_i\ge n_i\mbox{ for some }i\ge 1\}
$$
and let $b$ be the numbers of terms in  $\sum_Ja_JP_0^{j_0}\cdots P_r^{j_r}$ such that $j_i\ge n_i$ for some $i\ge 1$
and $j_0+\cdots+j_r=a$. Let $\sigma=(a,b)\in(\ZZ^2)_{\rm lex}$. Let $J_0=(\overline j_0,\ldots,\overline j_r)$ be such that
$a_{J_0}\ne 0$ and $\overline j_0+\cdots+\overline j_r=a$.
Write
$$
P_0^{\overline j_0}\cdots P_r^{\overline j_r}=P_0^{\overline j_0}\cdots P_i^{\overline j_i-n_i}P_i^{n_i}\cdots P_r^{\overline j_r}
$$
and substitute (\ref{eqM1}) for $P_i^{n_i}$, to obtain an expression of the form (\ref{eqW1}) of Lemma \ref{LemmaG10}.
Substitute this expression (\ref{eqW1}) for $P_0^{\overline j_0}\cdots P_r^{\overline j_r}$ in (\ref{eqG6}) and apply Remark \ref{RemarkG4},
to obtain an expression of the form (\ref{eqG6}) such that either the first sum is zero or the first sum is nonzero and all
terms in the first sum satisfy $j_i<n_i$ for $1\le i$ so that $\nu(f)=\rho$ and we have achieved the conclusions of the theorem,
or the first sum has a nonzero term which satisfies $j_i\ge n_i$ for some $i\ge 1$. By (\ref{eqW3}), we have an increase in $\sigma$ if this last case holds. 

Since there are only finitely many monomials $M$ in $P_0,\ldots P_r$ which have the value $\rho$, after a finite number of iterations of this step we must either find an expression (\ref{eqG6}) where the first sum is zero, or attain an expression
(\ref{eqG6}) satisfying the conclusions of the theorem.

If we obtain an expression (\ref{eqG6}) where the first sum is zero, then we have an expression (\ref{eqG5}) with an increase in 
$\rho$ (and possibly an increase in $r$), and we repeat the last step, either attaining the conclusions of the theorem or obtaining another increase in $\rho$.
Since there are only a finite number of monomials in the $\{P_i\}$ which have value $\le \nu(f)$, we must achieve the conclusions
of the theorem in a finite number of steps.

  Uniqueness of the first sum follows from 2) of Theorem \ref{Theorem1*}.

\end{proof}

\begin{Theorem}\label{Corollary1*} Suppose that $\nu$ is a rank 1 valuation which dominates $R$ and $\nu(x)=\nu(\mathfrak m_R)$. Then
\begin{enumerate} 
\item[a)] The  set $\{{\rm in}_{\nu}(x)\}\cup\{ {\rm in}_{\nu}(P_i)\mid n_i>1\}$
minimally generates ${\rm gr}_{\nu}(R)$ as a $\mathfrak k$-algebra. 
\item[b)] The set 
$$
\{\nu(x)\}\cup \{ \nu(P_i)\mid \overline n_i>1\}
$$ 
minimally generates the semigroup $S^{R}(\nu)$.
\item[c)] $V_{\nu}/\mathfrak m_{\nu}=\mathfrak k(\alpha_i\mid d_i>1)$ where $\alpha_i$ is defined by 4) (and possibly (\ref{eqL10})) of Theorem \ref{Theorem1*}.
\end{enumerate}
\end{Theorem}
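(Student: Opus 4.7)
The plan is to apply Theorem \ref{TheoremG2} to every element of $R$ and then to track initial forms, values and residues of ratios of monomials in $P_0,\ldots,P_r$. Because $\nu$ has rank $1$, for every $0\ne f\in R$ one may choose $n\in\ZZ_+$ with $\nu(f)<n\nu(\mathfrak m_R)$, so Theorem \ref{TheoremG2} yields an expansion
$$
f=\sum_I a_IP_0^{i_0}\cdots P_r^{i_r}+\sum_J\phi_JP_0^{j_0}\cdots P_r^{j_r}+h,
$$
in which the first sum has $\nu$-value $\nu(f)$ and the remaining terms have strictly larger value; passing to ${\rm in}_\nu$ gives ${\rm in}_\nu(f)=\sum_I\overline{a_I}\,{\rm in}_\nu(P_0^{i_0}\cdots P_r^{i_r})$, so $\{{\rm in}_\nu(P_i)\}_{i\ge 0}$ generates ${\rm gr}_\nu(R)$ as a $\mathfrak k$-algebra. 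When $n_i=1$, the bound $\sigma_{i,i}(k)<n_i$ in (\ref{eq11*}) forces $\sigma_{i,i}(k)=0$, so $P_{i+1}=P_i+\sum_k c_kP_0^{\sigma_{i,0}(k)}\cdots P_{i-1}^{\sigma_{i,i-1}(k)}$ with every monomial of value $\nu(P_i)<\nu(P_{i+1})$; hence ${\rm in}_\nu(P_i)$ lies in the $\mathfrak k$-subalgebra generated by $\{{\rm in}_\nu(P_j):j<i\}$, and induction on $i$ eliminates all such $P_i$ from the generating set, yielding the generation half of (a).

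For minimality in (a), I will assume $n_i>1$ and that ${\rm in}_\nu(P_i)$ is a polynomial expression in $\{{\rm in}_\nu(x)\}\cup\{{\rm in}_\nu(P_j):n_j>1,\ j\ne i\}$; each of its monomial summands $M_k$ then has $\nu$-value $\nu(P_i)$, and since $\nu(P_j)>\nu(P_i)$ for $j>i$, only indices $j\le i-1$ can appear. This forces $\nu(P_i)\in G(\nu(P_0),\ldots,\nu(P_{i-1}))$, so $\overline n_i=1$ and therefore $d_i>1$; dividing the relation by $U_i$ and using statement $B(i-1)$ from the proof of Theorem \ref{Theorem1*} to recognize $[M_k/U_i]\in\mathfrak k(\alpha_1,\ldots,\alpha_{i-1})$, I obtain $\alpha_i=\sum_k\overline{c_k}[M_k/U_i]\in\mathfrak k(\alpha_1,\ldots,\alpha_{i-1})$, contradicting $d_i>1$. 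For (b), taking values in the generation part of (a) shows that $\{\nu(x)\}\cup\{\nu(P_i):n_i>1\}$ generates $S^R(\nu)$; when $\overline n_i=1$, $\nu(P_i)=\nu(U_i)$ already lies in the semigroup generated by $\{\nu(P_j):j<i\}$, so induction on $i$ further discards those $\nu(P_i)$. Minimality of (b) is the value-based half of the argument just given: if $\overline n_i>1$, then $\nu(P_i)\notin G(\nu(P_0),\ldots,\nu(P_{i-1}))$, so it is not in the semigroup generated by $\{\nu(x)\}\cup\{\nu(P_j):j\ne i,\ \overline n_j>1\}$ (monomials involving $j>i$ have value strictly exceeding $\nu(P_i)$), while $\nu(x)=\nu(\mathfrak m_R)$ is the smallest positive element of $S^R(\nu)$ and so cannot be expressed by the others.

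For (c), the inclusion $\mathfrak k(\alpha_i:d_i>1)\subseteq V_\nu/\mathfrak m_\nu$ is immediate from the definition of the $\alpha_i$, and $\mathfrak k(\alpha_i:d_i>1)=\mathfrak k(\alpha_i:i\ge 1)$ holds because $d_i=1$ means $\alpha_i\in\mathfrak k(\alpha_1,\ldots,\alpha_{i-1})$ by the definition of $d_i$. For the reverse inclusion, any element of $V_\nu/\mathfrak m_\nu$ is $[f/g]$ for some $f,g\in R$ with $\nu(f)=\nu(g)$; applying Theorem \ref{TheoremG2} to $f$ and $g$ with a common $r$ and sufficiently large $n$, writing $F,G$ for the leading sums, and dividing numerator and denominator by any fixed monomial $M_0$ of value $\nu(f)$ appearing in $F$ expresses $[f/g]=[F/G]=[F/M_0]\cdot[M_0/G]$ as a quotient of two $\mathfrak k$-linear combinations of residues of Laurent monomials in $P_0,\ldots,P_r$ of value $0$, each of which lies in $\mathfrak k(\alpha_1,\ldots,\alpha_r)$ by statement $B(r)$. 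The main obstacle is the minimality argument in (a): the other pieces are essentially bookkeeping with values and initial forms, but (a) requires converting a hypothetical polynomial relation among initial forms into a linear relation over $\mathfrak k(\alpha_1,\ldots,\alpha_{i-1})$ satisfied by $\alpha_i$, which is where statement $B$ and the exponent bound $\sigma_{i,i}(k)<n_i$ built into the algorithm are essential.
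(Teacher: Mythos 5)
Your proof is correct and uses the same ingredients as the paper (Theorem \ref{TheoremG2}, together with the statements $B$ and $D$ from the proof of Theorem \ref{Theorem1*}), but your minimality argument for (a) follows a genuinely different and somewhat cleaner route. The paper, given a hypothetical relation $H=\sum_Jc_JP_0^{j_0}\cdots P_r^{j_r}$ expressing $P_i$ up to higher value, re-runs the normalization algorithm of Theorem \ref{TheoremG2} to bring all exponents below $n_k$ and then invokes the $\mathfrak k$-linear independence statement $D(s)$ to reach a contradiction. You bypass the normalization step entirely: the contributing monomials $M_k$ lie in $P_0,\ldots,P_{i-1}$ and have value $\nu(P_i)$, which immediately gives $\overline n_i=1$ and hence $d_i>1$; then $B(i-1)$ (which applies to arbitrary Laurent monomials, so no exponent bound is needed) yields $\alpha_i=\sum_k\overline{c_k}\bigl[M_k/U_i\bigr]\in\mathfrak k(\alpha_1,\ldots,\alpha_{i-1})$, contradicting $d_i>1$. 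This locates the contradiction in the residue-field extension degree rather than in $\mathfrak k$-linear independence, and avoids a second pass through Theorem \ref{TheoremG2}.

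Two small remarks. First, your extra step eliminating ${\rm in}_\nu(P_j)$ with $n_j=1$ from the generating set is redundant: the exponent bound $0\le i_k<n_k$ built into Theorem \ref{TheoremG2} already forces $i_k=0$ whenever $n_k=1$, so those $P_k$ never appear in the leading sum. Second, for completeness you should also rule out that ${\rm in}_\nu(x)$ is expressible in terms of $\{{\rm in}_\nu(P_j):n_j>1\}$ (the paper is also brief here): since $\nu(x)=\nu(\mathfrak m_R)$ and $\nu(P_j)\ge\nu(P_1)\ge\nu(x)$ for $j\ge 1$ with $\nu(P_j)>\nu(x)$ for $j\ge 2$, this could only occur via ${\rm in}_\nu(x)=\overline c\,{\rm in}_\nu(P_1)$ with $\nu(y)=\nu(x)$; but that forces $\alpha_1=[y/x]\in\mathfrak k$, hence $d_1=1$ and $\overline n_1=1$ so $n_1=1$, contradicting $P_1$ belonging to the set. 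Your minimality argument for (b), which uses only that $\nu(x)$ is the strictly smallest value, suffices for (b) but does not by itself cover this $\mathfrak k$-algebra case in (a).
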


\begin{proof} Theorem \ref{TheoremG2} implies that the set $\{{\rm in}_{\nu}(x)\}\cup\{ {\rm in}_{\nu}(P_i)\mid n_i>1\}$ generates $\mbox{gr}_{\nu}(R)$ as a $\mathfrak k$-algebra. 
We will show that the set generates $\mbox{gr}_{\nu}(R)$ minimally.
Suppose that it doesn't. Then there exists an $i\in \NN$ such that $n_i>1$ if $i>0$ and a sum
\begin{equation}\label{eqZ30}
H=\sum_Jc_JP_0^{j_0}\cdots P_r^{j_r}
\end{equation}
for some $r\in\NN$ with $c_J\in CS$ such that the monomials $P_0^{j_0}\cdots P_r^{j_r}$  have value $\nu(P_0^{j_0}\cdots P_r^{j_r})=\nu(P_i)$ 
with $j_i=0$ and $j_k=0$ if $n_k=1$  for $1\le k\le r$ for all $J$, and  
$$
\nu(\sum_Jc_JP_0^{j_0}\cdots P_r^{j_r}-P_i)>\nu(P_i).
$$
We thus have by 1) of Theorem \ref{Theorem1*} and since $\nu(P_0)=\nu(\mathfrak m_R)$, 
that $r\le i-1$. Thus $i\ge 1$.
By Theorem \ref{TheoremG2} applied to $H$, we have an expression
\begin{equation}\label{eqZ31}
P_i=\sum_Kd_K\P_0^{k_0}\cdots P_s^{k_s}+f
\end{equation}
where $s\in\NN$, $d_K\in CS$, $0\le k_l<n_l$ for $1\le l$, some $d_K\ne 0$,  $f\in R$ is such that $\nu(f)>\nu(P_i)$, and
$$
\nu(P_0^{k_0}\cdots P_s^{k_s})=\nu(H)=\nu(P_i)
$$
for all monomials in the first sum of (\ref{eqZ31}). Since the minimal value terms of the expression of $H$ in (\ref{eqZ30}) only involve $P_0,\ldots, P_{i-1}$ and all these monomials have the same value $\rho=\nu(H)$, the algorithm of Theorem \ref{TheoremG2} ends with $s\le i-1$ in (\ref{eqZ31}). But then we obtain from (\ref{eqZ31}) a contradiction to 2) of Theorem \ref{Theorem1*}.

Now a) and 3) of Theorem \ref{Theorem1*} imply statement b).

Suppose that $\lambda\in L=V_{\nu}/\mathfrak m_{\nu}$. Then $\lambda=\left[\frac{f}{f'}\right]$ for some $f,f'\in R$ with $\nu(f)=\nu(f')$. By Theorem \ref{TheoremG2}, there exist $r\in\ZZ_+$ and expressions
$$
f=\sum_{i=1}^ma_iP_0^{\sigma_0(i)}P_1^{\sigma_1(i)}\cdots P_r^{\sigma_r(i)}+h,
$$
$$
f'=\sum_{j=1}^nb_jP_0^{\tau_0(j)}P_1^{\tau_1(j)}\cdots P_r^{\tau_r(j)}+h'
$$
with $a_i,b_j\in CS$, $0\le \sigma_k(i)<n_k$ for $1\le k$ and $0\le \tau_k(j)<n_k$ for $1\le k$, the $P_0^{\sigma_0(i)}P_1^{\sigma_1(i)}\cdots P_r^{\sigma_r(i)}$, $P_0^{\tau_0(j)}P_1^{\tau_1(j)}\cdots P_r^{\tau_r(j)}$ all have the common value 
$$
\rho:= \nu(f)=\nu(f'),
$$
$h,h'\in R$  and $\nu(h)>\rho$, $\nu(h')>\rho$.
$$
\begin{array}{lll}
\lambda&=&\left(\sum_i\overline a_i[P_0^{\sigma_0(i)-\sigma_0(1)}\cdots P_r^{\sigma_r(i)-\sigma_r(1)}]\right)
\left(\sum_j\overline b_j[P_0^{\tau_0(i)-\sigma_0(1)}\cdots P_r^{\tau_r(i)-\sigma_r(1)}]\right)^{-1}\\
&&\,\,\,\,\,\,\,\,\,\,\in \mathfrak k(\alpha_1,\ldots,\alpha_r)
\end{array}
$$
by $B(r)$ of the proof of Theorem \ref{Theorem1*}.
\end{proof}
\vskip .2truein
 If $V_{\nu}/\mathfrak m_{\nu}$ is transcendental over $\mathfrak k$ then $\Gamma_{\nu}\cong \ZZ$ by Abhyankar's inequality.
 Zariski called such a  valuation a ``prime divisor of the second kind''. By c) of Theorem \ref{Corollary1*},
 $V_{\nu}/\mathfrak m_{\nu}=\mathfrak k(\alpha_i\mid d_i>1)$. There thus exists an index $i$ such that $\mathfrak k(\alpha_1,\ldots,\alpha_{i-1})$ is algebraic over $\mathfrak k$ and $\alpha_i$ is transcendental over  $\mathfrak k(\alpha_1,\ldots,\alpha_{i-1})$. Thus $\Omega=i$ in the algorithm of Theorem \ref{Theorem1*}, since $\alpha_i$ does not
 have a minimal polynomial over $\mathfrak k(\alpha_1,\ldots,\alpha_{i-1})$.
\vskip .2truein

\begin{Theorem}\label{Corollary3*} Suppose that $\nu$ is a rank 2 valuation which dominates $R$ and $\nu(x)=\nu(\mathfrak m_R)$.
Let $I_{\nu}$ be the height one prime ideal in $V_{\nu}$.
Then  one of the following three cases hold:
\begin{enumerate}
\item[1.] $I_{\nu}\cap R=\mathfrak m_R$. Then 
\begin{enumerate}
\item[a)] the finite set
$$
\{{\rm in}_{\nu}(x)\}\cup\{{\rm in}_{\nu}(P_i)\mid n_i>1\}
$$
minimally generates ${\rm gr}_{\nu}(R)$ as an $\mathfrak k$-algebra and
\item[b)] the finite set 
$$
\{\nu(x)\}\cup\{\nu(P_i)\mid \overline n_i>1\}
$$
minimally generates the semigroup $S^{R}(\nu)$.
\item[c)] $V_{\nu}/\mathfrak m_{\nu}=\mathfrak k(\alpha_i\mid d_i>1)$.
\end{enumerate}
\item[2.] $I_{\nu}\cap R=(P_{\Omega})$ is a height one prime ideal in $R$ and
\begin{enumerate}
\item[a)] 
the finite set 
$$
\{{\rm in}_{\nu}(x)\}\cup\{{\rm in}_{\nu}(P_i)\mid n_i>1\}
$$
minimally generates ${\rm gr}_{\nu}(R)$ as a $\mathfrak k$-algebra, and
\item[b)] The finite set 
$$
\{\nu(x)\}\cup\{\nu(P_i)\mid \overline n_i>1\}
$$
minimally generates the semigroup $S^{R}(\nu)$.
\item[c)] $V_{\nu}/\mathfrak m_{\nu}=\mathfrak k(\alpha_i\mid d_i>1)$.
\end{enumerate}
\item[3.] $I_{\nu}\cap R=(g)$ is a height one prime ideal in $R$ and 
\begin{enumerate}
\item[a)] 
the finite set 
$$
\{{\rm in}_{\nu}(x)\}\cup\{{\rm in}_{\nu}(P_i)\mid n_i>1\}\cup\{{\rm in}_{\nu}(g)\}
$$
minimally generates ${\rm gr}_{\nu}(R)$ as a $\mathfrak k$-algebra, and
\item[b)] The finite set 
$$
\{\nu(x)\}\cup\{\nu(P_i)\mid \overline n_i>1\}\cup\{\nu(g)\}
$$
minimally generates the semigroup $S^{R}(\nu)$.
\item[c)] $V_{\nu}/\mathfrak m_{\nu}=\mathfrak k(\alpha_i\mid d_i>1)$.
\end{enumerate}
\end{enumerate}
\end{Theorem}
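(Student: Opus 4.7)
The proof plan closely parallels that of Theorem \ref{Corollary1*}, with the rank one hypothesis replaced by a case analysis on the intersection $I_\nu\cap R$. The crucial observation making Theorem \ref{TheoremG2} usable in the rank two setting is that every nonzero $f\in R$ satisfies $\nu(f)<n\nu(\mathfrak m_R)$ in $(\RR^2)_{\mathrm{lex}}$ for some $n\in\ZZ_+$, obtained by choosing $n$ larger than the first coordinate of $\nu(f)$ relative to that of $\nu(x)$. Thus Theorem \ref{TheoremG2} gives, for every $f\in R$, an expansion whose leading monomials express $\mathrm{in}_\nu(f)$ as a $\mathfrak k$-polynomial in the $\mathrm{in}_\nu(P_i)$. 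Statement (c) in all three cases then follows by the argument of Theorem \ref{Corollary1*}(c): any $\lambda=[f/f']\in V_\nu/\mathfrak m_\nu$ with $\nu(f)=\nu(f')$ is handled by expanding numerator and denominator via Theorem \ref{TheoremG2} and invoking statement $B(r)$ from the proof of Theorem \ref{Theorem1*}, placing $\lambda$ in $\mathfrak k(\alpha_1,\ldots,\alpha_r)$; part 4) of Theorem \ref{Theorem1*} ensures that exactly the $\alpha_i$ with $d_i>1$ are required.

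For (a) and (b) in cases 1 and 2 I would repeat the proof of Theorem \ref{Corollary1*}(a,b) essentially verbatim. By Remark \ref{RemarkH2}, in the rank two setting the algorithm terminates at some finite $\Omega$, and $P_\Omega$ either generates $I_\nu\cap R$ (case 2) or $\mathfrak m_R\subset I_\nu$ already (case 1). The relation (\ref{eq11*}) together with the inequality $\nu(P_{i+1})>n_i\nu(P_i)$ shows that any $P_i$ with $n_i=1$ has $\mathrm{in}_\nu(P_i)$ already expressible via $\mathrm{in}_\nu(P_0),\ldots,\mathrm{in}_\nu(P_{i-1})$, so these $P_i$ can be dropped from the generating set. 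Minimality of the resulting finite set is the argument of Theorem \ref{Corollary1*}(a), resting on the linear independence statement 2) of Theorem \ref{Theorem1*}; part (b) follows by taking $\nu$-values in (a).

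Case 3 is where a new ingredient is required, since $g$ is not one of the $P_i$. In the rank two setting, Abhyankar's inequality forces $V_\nu/\mathfrak m_\nu$ to be finite over $\mathfrak k$, so only finitely many $d_i$ exceed one. I would first argue that case 3 forces $\Omega=\infty$ with $n_i=1$ for $i\gg 0$, by combining Remark \ref{Remark2} and Proposition \ref{Prop17}; this guarantees that only finitely many $\mathrm{in}_\nu(P_i)$ have $n_i>1$. To expand an arbitrary $f\in R$, factor $f=g^Nh$ with $g\nmid h$ (using that $R$ is a two dimensional regular local ring, hence a UFD), note that $\nu(h)$ lies in the isolated subgroup corresponding to $I_\nu$ and is therefore dominated by a multiple of $\nu(\mathfrak m_R)$, and apply Theorem \ref{TheoremG2} to $h$ to write $\mathrm{in}_\nu(h)$ as a $\mathfrak k$-polynomial in $\{\mathrm{in}_\nu(x)\}\cup\{\mathrm{in}_\nu(P_i)\mid n_i>1\}$. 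Multiplying by $\mathrm{in}_\nu(g)^N$ yields (a), and (b) is immediate. The main obstacle will be case 3: proving that one is indeed forced into the regime $n_i=1$ for $i\gg 0$, that $g$ genuinely generates a new graded component not reachable from the $P_i$, and that the augmented set remains minimal; this rests on Remark \ref{Remark2}, Proposition \ref{Prop17}, and a careful inspection of how the Cauchy limit of $\{P_i\}$ in $\hat R$ relates to $(g)$ in $R$.
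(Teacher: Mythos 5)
Your proposal has the right overall shape, but there are several concrete errors that would break the argument.

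First, your opening ``crucial observation'' is false in Cases 2 and 3. You claim that every nonzero $f\in R$ satisfies $\nu(f)<n\nu(\mathfrak m_R)$ for some $n\in\ZZ_+$. This only holds in Case 1, where $I_\nu\cap R=\mathfrak m_R$ and so $\nu(\mathfrak m_R)$ has positive first coordinate in $(\ZZ^2)_{\rm lex}$. In Cases 2 and 3, $\nu(x)=\nu(\mathfrak m_R)$ has \emph{zero} first coordinate (since $x\notin I_\nu$), so for any $f\in I_\nu\cap R$ one has $\nu(f)>n\nu(\mathfrak m_R)$ for \emph{every} $n$, and Theorem \ref{TheoremG2} is simply not applicable to such $f$. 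In particular, your claim to handle Case 2 by repeating Theorem \ref{Corollary1*}(a,b) ``essentially verbatim'' fails: already $f=P_\Omega$ has no $n$ with $\nu(f)<n\nu(\mathfrak m_R)$. The paper's fix---factoring $f=g^nu$ with $g\nmid u$, then applying Theorem \ref{TheoremG2} to $u$ and using $\nu(f)=n\nu(g)+\nu(u)$---is needed for Cases 2 \emph{and} 3, not just Case 3 as you propose.

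Second, you misread Remark \ref{RemarkH2}: it does not say that rank 2 forces $\Omega<\infty$. The remark is an equivalence describing when $\Omega<\infty$ occurs; the parenthetical ``(so that ${\rm rank}(\nu)=2$)'' is a consequence of $\nu(P_\Omega)\not\in\QQ\nu(x)$, not the other way around. Indeed Case 3 is precisely the rank-2 setting with $\Omega=\infty$, as you yourself observe later---so your sentence ``in the rank two setting the algorithm terminates at some finite $\Omega$'' contradicts your own subsequent treatment of Case 3.

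Third, the trichotomy itself is not free. When $\Omega<\infty$ and $I_\nu\cap R=(g)$ is a height one prime, you must \emph{prove} that $(g)=(P_\Omega)$. The paper shows that $\nu(P_\Omega)\not\in\QQ\nu(x)$ places $P_\Omega$ in $I_\nu$, so $g\mid P_\Omega$, and then a nontrivial expansion argument (expanding $g$ via the algorithm of Theorem \ref{TheoremG2}, separating off $P_\Omega$-divisible terms, and comparing ${\rm ord}$'s) shows $P_\Omega/g$ is a unit. You simply assert this step. The Case 3 outline you give is closer to the paper's proof, but even there, establishing that ${\rm in}_\nu(g)$ genuinely lies outside the subalgebra generated by the ${\rm in}_\nu(P_i)$, and that the augmented set is minimal, requires the argument with (\ref{eqZ15}) and 2),3) of Theorem \ref{Theorem1*}, which you gesture at but do not carry out.
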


\begin{proof} Since $\nu$ has rank 2, the set $\{P_i\mid n_{i}>1\}$ is a finite set since otherwise either $\Gamma_{\nu}$ is not a finitely generated group or
$V_{\nu}/\mathfrak m_{\nu}$ is not a finitely generated field extension of $\mathfrak k$, by 3) and 4) of Theorem \ref{Theorem1*}, which is a contradiction to Abhyankar's inequality. 

The case when $I_{\nu}\cap R=\mathfrak m_R$ now follows from  Theorem \ref{TheoremG2} and 2), 3) of Theorem \ref{Theorem1*}; the proof of c) is the same as the proof of c) of Theorem \ref{Corollary1*}.

Suppose that $I_{\nu}\cap R=(g)$ is a height one prime ideal in $R$. 
Suppose that $f\in R$. Then there exists $n\in\NN$ and $u\in R$ such that
$f=g^nu$ with $u\not\in (g)$. Thus 
\begin{equation}\label{eqZ15}
\nu(f)=n\nu(g)+\nu(u).
\end{equation}

Assume that $\Omega<\infty$. Then  $\nu(P_{\Omega})\not\in \QQ\nu(\mathfrak m_R)$ by Remark \ref{RemarkH2}. Then $P_{\Omega}=gf$ for some $f\in R$. We will show that $f$ is a unit in $R$. Suppose not. Then $\nu(g)<\nu(P_{\Omega})$.
Let $t=\mbox{ord}(g)$.  There exists $c\in\ZZ_+$ such that if $j_0,j_1,\ldots, j_{\Omega-1}\in\NN$ are such that
$\nu(P_0^{j_0}P_1^{j_1}\cdots P_{\Omega-1}^{j_{\Omega-1}})\ge c\nu(\mathfrak m_R)$ then  $\mbox{ord}(P_0^{j_0}P_1^{j_1}\cdots P_{\Omega-1}^{j_{\Omega-1}})>t$. We may assume that $c$ is larger than $t$. Write
$$
g=\sum_{i,j=1}^ca_{ij}x^iy^j+\Lambda
$$
with $\Lambda\in\mathfrak m_R^c$ and $a_{ij}\in CS$. 
$g$ has an expression of the form
\begin{equation}\label{eq*} 
g=\sum_Ja_JP_0^{j_0}\cdots P_{\Omega}^{j_{\Omega}}+\sum_{J'}a_{J'}P_0^{j_0'}\cdots P_{\Omega}^{j_{\Omega'}}+h
\end{equation}
with $a_J,a_{J'}\in CS$ and $h\in \mathfrak m_R^c$, and the terms in the first sum all have a common value $\rho$, which is smaller than
the values of the terms in the second sum.

Now we draw some conclusions which must hold for an expression of the form (\ref{eq*}).
We must have that 
\begin{equation}\label{eq1}
\rho<c\nu(\mathfrak m_R),
\end{equation}
 since otherwise, by our choice of $c$ and our assumption that $\mbox{ord}(f)>0$, so that
$\mbox{ord}(P_{\Omega})>\mbox{ord}(g)=t$, we would have that the right hand side of (\ref{eq*}) has order larger than $t$, which is impossible.  In particular, we have 
\begin{equation}\label{eq2}
j_{\Omega}=0
\end{equation}
 in all terms in the first sum.

We also must have that 
\begin{equation}\label{eq**}
j_i\ge n_i  \mbox{ for some $i$ with $1\le i<\Omega$ for all terms in the first sum}.
\end{equation}
This follows since otherwise we would have $\nu(g)=\rho<c\nu(\mathfrak m_R)$, which is impossible.

We apply the algorithm of Theorem 4.9 to (\ref{eq*}), and apply a substitution of the form (20) to a monomial in the first sum.
As shown in the proof of Theorem 4.9, we must obtain an expression (\ref{eq*}) with an increase in $\rho$ after a finite number of iterations,
since (\ref{eq**}) must continue to hold. Since there are only finitely many values in the semigroup $S^R(\nu)$ between 0 and $c\nu(\mathfrak m_R)$, after finitely many iterations of the algorithm we obtain an expression (\ref{eq*}) with $\rho\ge c\nu(\mathfrak m_R)$, which is a contradiction to (\ref{eq1}). This contradiction shows that $P_{\Omega}$ is a unit times $g$, 
so we may replace $g$ with $P_{\Omega}$,
and we are in Case 2 of the conclusions of the corollary. 

If $\Omega=\infty$ then $\nu(P_i)\in \QQ\nu(\mathfrak m_R)$ for all $i$ (by Remark \ref{RemarkH2}) and we are in Case 3 of the conclusions of the corollary.

The conclusions of a) and b) of Cases 2 and 3 of the corollary now follow from applying Theorem \ref{TheoremG2} and 2), 3) of 
Theorem \ref{Theorem1*} to $u$ in (\ref{eqZ15}). 

Suppose that $\lambda\in V_{\nu}/\mathfrak m_{\nu}$. Then $\lambda=\left[\frac{f}{f'}\right]$ for some $f,f'\in R$ with $\nu(f)=\nu(f')$.
We may assume (after possibly dividing out a common factor) that $g\not\,\mid f$ and $g\not\,\mid f'$. Then the proof of c) of cases 2 and 3 proceeds as in the proof of c) of Theorem \ref{Corollary1*}.

\end{proof}

\section{Valuation semigroups and residue field extension on a two dimensional regular local ring}\label{Proof1}

In this section, we prove Theorem \ref{Theorem3*} which is stated in the introduction. Theorem \ref{Theorem3*} 
gives necessary and sufficient conditions for a semigroup and field extension to be the valuation semigroup 
and residue field of a valuation dominating a regular local ring
of dimension two.

 Suppose that $\nu$ is a valuation dominating  $R$. Let $S=S^r(\nu)$ and $L=V_{\nu}/\mathfrak m_{\nu}$. Let $x,y$ be regular parameters in $R$ such that
$\nu(x)=\nu(\mathfrak m_R)$. Set $P_0=x$ and $P_1=y$. Let $\{P_i\}$ be the sequence of elements of $R$ defined by the algorithm of Theorem \ref{Theorem1*}.  We have by Remark \ref{Remark2} and its proof, that if
$$
\Omega=\infty\mbox{ and }\mbox{ $n_i=1$ for $i\gg 0$,}
$$
then  $I_{\hat R}\ne (0)$ (where $I_{\hat R}$ is the prime ideal in $\hat R$ of Cauchy sequences in $R$ satisfying (\ref{eqZ12})). Thus $\nu$ has rank 2  since $R$ is complete, and $\nu$ must satisfy Case 3 of Theorem \ref{Corollary3*}.

Set $\sigma(0)=0$ and inductively define 
$$
\sigma(i)=\min\{j\mid j>\sigma(i-1)\mbox{ and }n_j>1\}.
$$
This defines an index set $I$ of finite or infinite cardinality $\Lambda=|I|-1\ge 1$. Suppose that either $\nu$ has rank 1 or $\nu$ has
rank 2 and one of the first two cases of Theorem \ref{Corollary3*} hold for the $P_i$. Let 
$$
\beta_i=\nu(P_{\sigma(i)})\in S^R(\nu)
$$
for $i\in I$ and 
$$
\gamma_i=\left[\frac{P_{\sigma(i)}^{\overline n_{\sigma(i)}}}{U_{\sigma(i)}}\right]\in V_{\nu}/\mathfrak m_{\nu}
$$
if $i>0$ and $\sigma(i)<\Omega$ or $\sigma(i)=\Omega$ and $\overline n_{\Omega}<\infty$. Set $\gamma_{\Lambda}=1$ if $\sigma(\Lambda)=\Omega$ and $\overline n_{\Omega}=\infty$.

By Theorem \ref{Theorem1*} and Theorem \ref{Corollary1*} or \ref{Corollary3*}, $\{\beta_i\}$ and $\{\gamma_i\}$ satisfy the conditions
1) and 2) of Theorem \ref{Theorem3*}. 

Suppose that $\nu$ has rank 2 and the third case of Theorem \ref{Corollary3*} holds for the $P_i$. Then $\Lambda<\infty$. Let $I_{\nu}\cap R=(g)$ (where $I_{\nu}$ is the height one prime ideal of $V_{\nu}$). Let $\overline \Lambda=\Lambda+1$. Define
$\beta_i=\nu(P_{\sigma(i)})$ for $i<\overline\Lambda$ and $\beta_{\overline \Lambda}=\nu(g)$. Define
$$
\gamma_i=\left[\frac{P_{\sigma(i)}^{\overline n_{\sigma(i)}}}{U_{\sigma(i)}}\right]\in V_{\nu}/\mathfrak m_{\nu}
$$
for $0<i<\overline\Lambda$ and define $\gamma_{\overline\Lambda}=1$. By Theorem \ref{Theorem1*} and Case 3 of Theorem \ref{Corollary3*}, $\{\beta_i\}$ and $\{\gamma_i\}$ satisfy conditions 1) and 2) of Theorem \ref{Theorem3*}.

Now suppose that $S$ and $L$ and the given sets $\{\beta_i\}$ and $\{\alpha_i\}$ satisfy conditions 1) and 2) of the theorem.   We will construct a valuation $\nu$ which dominates $R$ with $S^R(\nu)=S$ and $V_{\nu}/\mathfrak m_{\nu}=L$.

Let
$$
f_i(u)=u^{d_i}+b_{i,d_i-1}u^{d_i-1}+\cdots+b_{i,0}
$$
be the minimal polynomial of $\alpha_i$ over $\mathfrak k(\alpha_1,\ldots,\alpha_{i-1})$, and let $n_i=\overline n_id_i$. 

We will inductively define $P_i\in R$,  a function $\nu$ on Laurent monomials in $P_0,\ldots,
P_i$ such that 
$$
\nu(P_0^{a_0}P_1^{a_1}\cdots P_i^{a_i})=a_0\beta_0+a_1\beta_1+\cdots +a_i\beta_i
$$
for $a_0,\ldots, a_i\in\ZZ$ and monomials $U_i$ in $P_0,\ldots, P_{i-1}$, such that
$$
\nu(U_i)=\overline n_i\beta_i,
$$
a function $\mbox{res}$ on the Laurent monomials $P_0^{a_0}P_1^{a_1}\cdots P_i^{a_i}$ which satisfy $\nu(P_0^{a_0}P_1^{a_1}\cdots P_i^{a_i})=0$,
such that 
\begin{equation}\label{eqL40}
\mbox{res}\left(\frac{P_j^{\overline n_j}}{U_j}\right)=\alpha_j
\end{equation}
for $1\le j\le i$.

Let $x,y$ be regular parameters in $R$. Define $P_0=x$,  $P_1=y$,  $\beta_0=\nu(P_0)$, and $\beta_1=\nu(P_1)$.
We inductively construct the $P_i$ by the procedure of the algorithm of Theorem \ref{Theorem1*}. We must modify the
inductive statement $A(i)$ of the proof of Theorem \ref{Theorem1*} as follows:

\vskip .2truein
 $$
\begin{array}{ll}
&\mbox{ There exists $U_i=P_0^{w_0(i)}P_1^{w_1(i)}\cdots P_{i-1}^{w_{i-1}(i)}$ for some $w_j(i)\in\NN$}\\
&\mbox{ and 
$0\le w_j(i)<n_j$ for $1\le j\le i-1$}\\
\overline A(i)&\mbox{ such that $\overline n_i\nu(P_i)=\nu(U_i)$. 
There exist $a_{s,t}\in CS$ }\\
&\mbox{ and $j_0(s,t), j_1(s,t),\ldots, j_{i-1}(s,t)\in \NN$ with $0\le j_k(s,t)<n_k$}\\
&\mbox{ for $k\ge 1$ and $0\le t<\overline d_i$ such that}\\
&
\nu(P_0^{j_0(s,t)}P_1^{j_1(s,t)}\cdots P_{i-1}^{j_{i-1}(s,t)}P_{i}^{t\overline n_i})=\overline n_id_i\nu(P_i)\\
&\mbox{for all $s,t$  and}
\end{array}
$$
\begin{equation}\label{eqM1*}
\,\,\,\,\,\,\,\,\,\,P_{i+1}:=P_i^{\overline n_id_i}
+\sum_{t=0}^{d_i-1}\left((\sum_{s=1}^{\lambda_t}a_{s,t}P_0^{j_0(s,t)}P_1^{j_1(s,t)}\cdots P_{i-1}^{j_{i-1}(s,t)}\right)P_i^{t\overline n_i}
\end{equation}

$$
\begin{array}{ll}
&\mbox{ satisfies}\\ 
&\,\,\,\,\,\,\,\,\,\,b_{i,t}=\sum_{s=1}^{\lambda_t}\overline a_{s,t}\mbox{res}\left(\frac{P_0^{j_0(s,t)}P_1^{j_1(s,t)}\cdots P_{i-1}^{j_{i-1}(s,t)}}{U_i^{d_i-t}}\right)\\
&\mbox{ for $0\le t\le d_i-1$. } 
\end{array}
$$

\vskip .2truein
We inductively verify $\overline A(i)$ for $1\le i< \Lambda$ and the statements $B(i)$, $C(i)$ and $D(i)$ (with the residues
$[M]$ replaced with $\mbox{res}(M)$). We observe from $B(i)$ that the function res is determined by (\ref{eqL40}).
The inequality in 2) of the assumptions of the theorem is necessary to allow us to apply Lemma \ref{Lemma2}.

We now show that if $\Lambda=\infty$, then given $\sigma\in\ZZ_+$, there exists $\tau\in\ZZ_+$ such that
\begin{equation}\label{eqZ20}
\mbox{ord}(P_i)>\sigma\mbox{ if }i>\tau.
\end{equation}
We establish (\ref{eqZ20}) by induction on $\sigma$.  Suppose that $\mbox{ord}(P_i)>\sigma$ if $i>\tau$. There exists $\lambda$ such that $\beta_0<\beta_i$ if $i\ge\lambda$.
Let $\tau'=\max\{\sigma+\tau+1,\tau+1,\lambda\}$. We will show that $\mbox{ord}(P_i)>\sigma+1$ if $i>\tau'$. From (\ref{eqM1*}), we must show that if $i>\tau'$ and $(a_0,\ldots, a_{i-1})\in\NN^i$ are such that 
$$
a_0\beta_0+a_1\beta_1+\cdots+a_{i-1}\beta_{i-1}=n_{i-1}\beta_{i-1}
$$
then
\begin{equation}\label{eqZ21}
a_0\mbox{ord}(P_0)+a_1\mbox{ord}(P_1)+\cdots+a_{i-1}\mbox{ord}(P_{i-1})>\sigma+1.
\end{equation}
If $a_{\tau+1}+\cdots+a_{i-1}\ge 2$ then (\ref{eqZ21}) follows from induction. If $a_{\tau+1}+\cdots+a_{i-1}=1$
then some $a_j\ne 0$ with $0\le j\le \tau$ since $n_{i-1}>1$, so (\ref{eqZ21}) follows from induction. If $a_j=0$ for
$j\ge \tau+1$ then
$$
n_{i-1}\beta_{i-1}=a_0\beta_0+\cdots+a_{\tau}\beta_{\tau}<(a_0+\cdots+a_{\tau})\beta_{\tau}.
$$
Thus
$$
(a_0+\cdots+a_{\tau})>\frac{n_{i-1}\beta_{i-1}}{\beta_{\tau}}\ge 2^{i-\tau}>\sigma+1.
$$
Thus (\ref{eqZ21}) holds in this case.

We first suppose that for all $P_i$, there exists $m_i\in\ZZ_+$ such that $m_i\nu(P_i)>\min\{\beta_0,\beta_1\}$.

We  now establish the following:
\vskip .2truein
\noindent{\it Suppose that $f\in R$. Then there exists an expansion}
\begin{equation}\label{fund}
f=\sum_Ia_IP_0^{i_0}P_1^{i_1}\cdots P_r^{i_r}+\sum_J\phi_JP_0^{j_0}\cdots P_r^{j_r}
\end{equation}
{\it for some $r\in \NN$ where $\nu(P_0^{i_0}P_1^{i_1}\cdots P_r^{i_r})$ have a common value $\rho$ for all terms in the first sum,
 all $a_I\in CS$, $I,J\in \NN^{r+1}$ and some $a_I\ne 0$,  $0\le i_k< n_k$ for $1\le k\le r$
$\nu(P_0^{j_0}\cdots P_r^{j_r})>\rho$ for all terms in the second sum, and $\phi_J\in R$ for all terms in the second sum.
The first sum $\sum_Ia_IP_0^{i_0}P_1^{i_1}\cdots P_r^{i_r}$ is uniquely determined by these conditions.}
\vskip .2truein
The proof of (\ref{fund}) follows from the proofs of Lemma \ref{LemmaG10} and Theorem \ref{TheoremG2}, observing that all properties of
a valuation which $\nu$ is required to satisfy in these proofs hold for the function $\nu$ on Laurent monomials in the $P_i$ which we have defined above, and replacing $[M]$ in Lemma \ref{LemmaG10} with the function ${\rm res}(M)$ for Laurent monomials 
$M$ with $\nu(M)=0$.

The $n$ in the statement of Theorem \ref{TheoremG2} is chosen so that if $M$ is a monomial in the $P_i$ with
$\mbox{ord}(M)=\mbox{ord}(f)$, then $\nu(M)<n\min\{\beta_0,\beta_1\}$ (such an $n$ exists trivially if $\Lambda<\infty$ and by (\ref{eqZ20}) if $\Lambda=\infty$).

We can thus extend $\nu$ to $R$ by defining
$$
\nu(f)=\rho\mbox{ if $f$ has an expansion (\ref{fund})}.
$$
Now we will show that $\nu$ is  a valuation. 
Suppose that $f,g\in R$. We have expansions 
\begin{equation}\label{eqL20}
f=\sum_Ia_IP_0^{i_0}P_1^{i_1}\cdots P_r^{i_r}+\sum_J\phi_JP_0^{j_0}\cdots P_r^{j_r}
\end{equation}
and
\begin{equation}\label{eqL21}
g=\sum_Kb_KP_0^{k_0}P_1^{k_1}\cdots P_r^{k_r}+\sum_L\phi_LP_0^{l_0}\cdots P_r^{l_r}
\end{equation}
of the form (\ref{fund}). Let $\rho=\nu(f)$ and $\rho'=\nu(g)$.
The statement that $\nu(f+g)\ge \min\{\nu(f),\nu(g)\}$ follows from Remark \ref{RemarkG4} and the algorithm of Theorem \ref{TheoremG2}.

Let $V$ be a monomial in $P_0,\ldots, P_r$ such that $\nu(V)=\nu(P_0^{i_0}\cdots P_r^{i_r})$ for all $I$ in the first sum
of $f$ in (\ref{eqL20})
and let $W$ be a monomial in $P_0,\ldots, P_r$ such that $\nu(W)=\nu(P_0^{k_0}\cdots P_r^{k_r})$ for all $K$ in the first sum
of $g$ in (\ref{eqL21}).
We have that
$$
\sum \overline a_I\mbox{ res}\left(\frac{P_0^{i_0}\cdots P_r^{i_r}}{V}\right)\ne 0 \mbox{ in }L
$$
and
$$
\sum \overline b_K\mbox{ res}\left(\frac{P_0^{k_0}\cdots P_r^{k_r}}{W}\right)\ne 0 \mbox{ in }L
$$
by $D(r)$.

 We have (applying Remark \ref{RemarkG4}) an expansion
\begin{equation}\label{eqL22}
fg=\sum_Md_MP_0^{m_0}P_1^{m_1}\cdots P_r^{m_r}+\sum_Q\psi_QP_0^{q_0}\cdots P_r^{q_r}
\end{equation}
with $d_M\in S$ for all $M$, $\psi_Q\in R$ for all $Q$, $\nu(P_0^{m_0}P_1^{m_1}\cdots P_s^{m_r})=\rho+\rho'$ for all terms in the first sum, and some $d_M\ne 0$ and $\nu(P_0^{q_0}\cdots P_r^{q_r})>\rho+\rho'$ for all terms in the second sum, 
which satisfies all conditions of (\ref{fund}) except that we only have that $m_0,m_1,\ldots,m_r\in\NN$.
We have
$$
\sum_M\overline d_M\mbox{ res}\left(\frac{P_0^{m_0}\cdots P_r^{m_r}}{VW}\right)
=\left(\sum_I\overline a_I\mbox{ res}\left(\frac{P_0^{i_0}\cdots P_r^{i_r}}{V}\right)\right)
\left(\sum_K\overline b_K\mbox{ res}\left(\frac{P_0^{k_0}\cdots P_r^{k_r}}{W}\right)\right)\ne 0.
$$
By (\ref{eqW2}) of Lemma \ref{LemmaG10} (with $[M]$ replaced with $\mbox{res}(M)$ for a Laurent monomial $N$ with $\nu(M)=0$)
we see that the algorithm of Theorem \ref{TheoremG2} which puts the expansion (\ref{eqL22}) into the form (\ref{fund}) converges to an expression (\ref{fund}) where the terms in the first sum all have
$\nu(P_0^{i_0}\cdots P_r^{i_r})=\rho+\rho'$ with
$$
\sum_I\overline a_I\mbox{ res}\left(\frac{P_0^{i_0}\cdots P_r^{i_r}}{VW}\right)=\sum_M\overline d_M\mbox{ res}\left(\frac{P_0^{m_0}\cdots P_r^{m_r}}{VW}\right)\ne 0.
$$
Thus $\nu(fg)=\nu(f)+\nu(g)$. 
We have established that $\nu$ is a valuation.

By Theorem \ref{Corollary1*} or Case 1 of Theorem \ref{Corollary3*}, we have that $S=S^R(\nu)$ and $L=V_{\nu}/\mathfrak m_{\nu}$.
\vskip .2truein
Finally, we suppose that $\Lambda$ is finite and $\overline n_{\Lambda}=\infty$. Given $g\in R$, write
\begin{equation}\label{eqL31}
g=P_{\Lambda}^tf
\end{equation}
where $P_{\Lambda}\not\,\mid f$. Choose $n\in\ZZ_+$ so that if $M$ is a monomial in $P_0,\ldots, P_{\Lambda-1}$ with ${\rm ord}(M)={\rm ord}(f)$
then $\nu(M)<n\min\{\beta_0,\beta_1\}$.

The argument giving the expansion (\ref{fund}) now provides an expansion
\begin{equation}\label{eqL30}
f=\sum_Ia_IP_0^{i_0}\cdots P_{\Lambda}^{i_{\Lambda}}+\sum_J\phi_JP_0^{j_0}\cdots P_{\Lambda}^{j_{\Lambda}}+h_1
\end{equation}
where $\nu(P_0^{i_0}\cdots P_{\Lambda}^{i_{\Lambda}})$ has a common value $\rho$ for all monomials in the first sum, $a_I\in CS$ for all $I$, $\nu(P_0^{j_0}\cdots P_{\Lambda}^{j_{\Lambda}})>\rho$ for all monomials in the second sum, $\phi_J\in R$ for all $J$ and $h_1\in \mathfrak m_R^n$.

If $i_{\Lambda}=0$ for all monomials in the first sum, then we obtain an expansion of $f$ of the form (\ref{fund}). Suppose that  $i_{\Lambda}\ne 0$ for some monomial in the first sum. Then $i_{\Lambda}\ne 0$ for all terms in the first sum,   $j_{\Lambda}\ne 0$ for all terms in the second sum, and we
have an expression
$f=P_{\Lambda}t_1+h_1$  for some $t_1\in R$. Repeating this argument for increasingly large values of $n$, we either obtain an $n$ giving an expression (\ref{fund}) for $f$, or we obtain the statement that
$$
f\in \cap_{n=1}^{\infty}\left((P_{\Lambda})+\mathfrak m_R^n\right)=(P_{\Lambda}),
$$
which is impossible.
Thus we can extend $\nu$ to $R$ by defining $\nu(g)=t\beta_{\Lambda}+\rho$ if $g=P_{\Lambda}^tf$ where $P_{\Lambda}\not\,\mid f$ and
$f$ has an expansion (\ref{fund}).

It follows that $\nu$ is a valuation, by an extension of the proof of the previous case. By Case 2 of Theorem \ref{Corollary3*}, we have that
$S=S^R(\nu)$ and $L=V_{\nu}/\mathfrak m_{\nu}$.

\begin{Corollary}\label{CorollaryN1}
Suppose that $R$ is a regular local ring of dimension two and $\nu$ is a valuation dominating $R$. Then the semigroup $S^{R}(\nu)$ has a generating
set $\{\beta_i\}_{i\in I}$ and $V_{\nu}/\mathfrak m_{\nu}$ is generated over $\mathfrak k=R/\mathfrak m_R$ by a set $\{\alpha_i\}_{i\in I_+}$ such that 1) and 2) of Theorem \ref{Theorem3*} hold, but  the additional case that $\overline n_{\Lambda}<\infty$ and $d_{\Lambda}<\infty$ if $\Lambda<\infty$ may hold if $R$ is not complete.
\end{Corollary}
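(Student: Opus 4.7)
The plan is to follow the necessity direction of the proof of Theorem \ref{Theorem3*} essentially verbatim, applying the algorithm of Theorem \ref{Theorem1*} and its consequences (Theorems \ref{Corollary1*} and \ref{Corollary3*}) to $R$ itself. The only place where the proof of Theorem \ref{Theorem3*} uses completeness of $R$ is in citing Corollary \ref{CorollaryR2} to rule out the Remark \ref{Remark2} scenario; in the general case one must simply accept that scenario as a genuine possibility and verify that it produces exactly the extra alternative $\overline n_\Lambda, d_\Lambda < \infty$ permitted in the statement.

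Concretely, I would pick regular parameters $x, y$ of $R$ with $\nu(x) = \nu(\mathfrak m_R)$, set $P_0 = x$, $P_1 = y$, and run Theorem \ref{Theorem1*} to obtain $\{P_i\}$ together with the data $n_i, \overline n_i, d_i, U_i, \alpha_i$. Define $\sigma(0) = 0$ and $\sigma(i) = \min\{j > \sigma(i-1) \mid n_j > 1\}$ to get an index set $I$ of cardinality $\Lambda + 1$, and put $\beta_i = \nu(P_{\sigma(i)})$ and $\alpha_i$ equal to the normalized residue from 4) of Theorem \ref{Theorem1*}. If $\nu$ has rank $2$ and case 3 of Theorem \ref{Corollary3*} applies, I would append one more index with $\beta_{\Lambda+1} = \nu(g)$ and $\alpha_{\Lambda+1} = 1$, where $(g) = I_\nu \cap R$, exactly as in the proof of Theorem \ref{Theorem3*}. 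That $\{\beta_i\}$ generates $S^R(\nu)$ and that $\{\alpha_i\}$ generates $V_\nu/\mathfrak m_\nu$ over $\mathfrak k$ is then parts b) and c) of Theorem \ref{Corollary1*} or of Theorem \ref{Corollary3*}, neither of which assumes $R$ complete, so this transfers intact; the inequality and finiteness requirements of condition 2) for $i < \Lambda$ follow directly from 1), 3), and 4) of Theorem \ref{Theorem1*}.

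The point requiring attention, and the obstacle to simply quoting the earlier proof wholesale, is the termination case $\Lambda < \infty$. If the algorithm itself stops at finite $\Omega$, then (\ref{eqL15}) or (\ref{eqL10}) holds and one recovers one of the two alternatives already listed in Theorem \ref{Theorem3*}. The new behavior appears when $\Omega = \infty$ but $n_j = 1$ for all $j \gg 0$; by Remark \ref{Remark2} this forces $\nu$ to be discrete of rank $1$ with $V_\nu/\mathfrak m_\nu$ finite over $\mathfrak k$, a configuration ruled out when $R$ is complete by Corollary \ref{CorollaryR2} but perfectly consistent otherwise. In that configuration the reindexing $\sigma$ selects only finitely many indices, so $\Lambda < \infty$, and at the last selected index one has $n_{\sigma(\Lambda)} = \overline n_{\sigma(\Lambda)} d_{\sigma(\Lambda)}$ a finite integer greater than $1$. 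This produces precisely the additional case $\overline n_\Lambda, d_\Lambda < \infty$ admitted in the statement, closing the argument.
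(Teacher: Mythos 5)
Your proposal is correct and follows essentially the same route as the paper: rerun the necessity direction of the proof of Theorem \ref{Theorem3*}, observing that the only place completeness enters is in forcing the scenario of Remark \ref{Remark2} (namely $\Omega=\infty$ with $n_i=1$ for $i\gg 0$) to have $I_{\hat R}\cap R\ne (0)$ and hence to fall under Case~3 of Theorem \ref{Corollary3*}. One small imprecision in your last paragraph: Remark \ref{Remark2} by itself does not force $\nu$ to be of rank $1$; it only gives that $\nu$ is discrete with $[V_\nu/\mathfrak m_\nu : \mathfrak k]<\infty$ and $I_{\hat R}\ne (0)$, leaving open the alternative $I_{\hat R}\cap R\ne (0)$, which is rank $2$ and Case~3 again. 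The rank $1$ conclusion comes from additionally assuming $I_{\hat R}\cap R=(0)$, which can occur only when $R$ is not complete. Since you have already disposed of Case~3 in your preceding paragraph, the remaining configuration is indeed rank $1$, so your overall logic is sound; just attribute the rank statement to that residual assumption rather than to Remark \ref{Remark2} alone.
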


\begin{proof} The only case  we have not considered in Theorem \ref{Theorem3*} is the analysis in the case when 
$\Omega=\infty$, $n_i=1$ for $i\gg0$, 
$I_{\hat R}\ne 0$ and $I_{\hat R}\cap R= (0)$ (so that $R$ is not complete). In this case $\nu$ is discrete of rank 1, $\Lambda<\infty$, $\overline n_{\lambda}<\infty$ and $d_{\Lambda}<\infty$ by Remark \ref{Remark2}, giving the additional possibility stated
in the Corollary. 
\end{proof}

\section{Valuation Semigroups  on a regular local ring of dimension two}\label{Proof2}

In this section we prove Theorem \ref{Corollary4*} which is stated in the introduction. Theorem \ref{Corollary4*}
gives necessary and sufficent conditions for a semigroup  to be the valuation semigroup of a valuation dominating a regular local ring of dimension two.

\vskip .2truein
 If $S=S^{R}(\nu)$ for some valuation $\nu$ dominating $R$, then 1) and 2) of Theorem \ref{Corollary4*} hold by Corollary \ref{CorollaryN1}.
Observe that the construction in the proof of Theorem \ref{Theorem3*} of a valuation $\nu$  with a prescribed semigroup $S$ and residue field $L$ satisfying the conditions 1) and 2) of Theorem \ref{Theorem3*} is valid for any regular local ring $R$ of dimension 2 (with residue field $\mathfrak k$). 
Taking $L=\mathfrak k$ (or $L=\mathfrak k(t)$ where $t$ is an indeterminate), we  may thus construct a valuation $\nu$ dominating $R$ with semigroup $S^R(\nu)=S$ whenever $S$ satisfies the  conditions 1) and 2)  of Theorem \ref{Corollary4*}.

\begin{Definition}
Suppose that $S$ is a semigroup such that the group $G$ generated by $S$ is isomorphic to $\ZZ$. $S$ is  symmetric if there exists $m\in G$ such that $s\in S$ if and only if $m-s\not\in S$ for all $s\in G$.
\end{Definition}

We deduce from Theorem \ref{Corollary4*} a generalization of a result of Noh \cite{N}.

\begin{Corollary}\label{Symmetric} Suppose that $R$ is a regular local ring of dimension two and $\nu$ is a
 valuation dominating $R$ such that  $\nu$ is discrete of rank 1. Then $S^R(\nu)$ is symmetric.
 \end{Corollary}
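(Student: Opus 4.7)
The plan is to use Theorem \ref{Corollary4*} and Corollary \ref{CorollaryN1} to present $S^R(\nu)$ as a free numerical semigroup in the classical sense, and then deduce symmetry by a direct calculation on a canonical form.

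Since $\nu$ is discrete of rank $1$, $\Gamma_\nu\cong\ZZ$ and $S^R(\nu)\subseteq\NN$ is a numerical semigroup. Corollary \ref{CorollaryN1} provides a generating set $\{\beta_i\}_{i\in I}$ of $S^R(\nu)$ satisfying 1) and 2) of Theorem \ref{Theorem3*}. Any $\beta_i$ with $\overline n_i=1$ already lies in $S(\beta_0,\dots,\beta_{i-1})$ by Lemma \ref{Lemma2}(1) and is therefore redundant; discarding such $\beta_i$ and renumbering, I may assume $\overline n_i\ge 2$ for every $i\ge 1$, and a short check shows that the inequalities $\beta_{i+1}>\overline n_i\beta_i$ and the identities $\overline n_j=[G(\beta_0,\dots,\beta_j):G(\beta_0,\dots,\beta_{j-1})]$ survive the renumbering. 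Because the chain $G_0\subsetneq G_1\subsetneq\cdots\subseteq\Gamma_\nu=\ZZ$ is now strictly increasing inside $\ZZ$, it must terminate, forcing $\Lambda:=|I|-1<\infty$ and $\overline n_\Lambda<\infty$.

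I next check that $\overline n_i\beta_i\in S(\beta_0,\dots,\beta_{i-1})$ for every $1\le i\le\Lambda$. For $i<\Lambda$ this is Lemma \ref{Lemma2}(1); for $i=\Lambda$ (with $\Lambda\ge 2$) I apply Lemma \ref{Lemma2}(2) with $k=\Lambda-1$, observing that $\overline n_\Lambda\beta_\Lambda\in G_{\Lambda-1}$ by definition and $\overline n_\Lambda\beta_\Lambda\ge\beta_\Lambda>\overline n_{\Lambda-1}\beta_{\Lambda-1}$; the case $\Lambda=1$ is automatic since $\overline n_1\beta_1$ is a positive element of $G_0=\ZZ\beta_0=\NN\beta_0$. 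Iterated Euclidean division with respect to $\beta_\Lambda,\beta_{\Lambda-1},\dots,\beta_1$ then shows that each $s\in\Gamma_\nu$ has a unique representation $s=c_0\beta_0+\sum_{i=1}^\Lambda c_i\beta_i$ with $c_0\in\ZZ$ and $0\le c_i<\overline n_i$, uniqueness coming from $[G_i:G_{i-1}]=\overline n_i$. The key claim is that $s\in S^R(\nu)$ if and only if $c_0\ge 0$: starting from any nonnegative expression $s=\sum a_i\beta_i$ with $a_i\in\NN$ and reducing each $a_i$ modulo $\overline n_i$ for $i=\Lambda,\dots,1$ using $\overline n_i\beta_i\in S(\beta_0,\dots,\beta_{i-1})$ preserves nonnegativity of every coefficient, so the resulting canonical form has $c_0=a_0\ge 0$.

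Set $F:=-\beta_0+\sum_{i=1}^\Lambda(\overline n_i-1)\beta_i$. For $s$ with canonical expansion $c_0\beta_0+\sum c_i\beta_i$ one computes $F-s=-(1+c_0)\beta_0+\sum_{i=1}^\Lambda(\overline n_i-1-c_i)\beta_i$; the bounds $0\le\overline n_i-1-c_i<\overline n_i$ identify this as the canonical expansion of $F-s$, and its $\beta_0$-coefficient $-1-c_0$ is nonnegative exactly when $c_0<0$. The previous claim then yields $F-s\in S^R(\nu)\iff s\notin S^R(\nu)$, so $S^R(\nu)$ is symmetric with $m=F$. The main obstacle is the cleanup in the first paragraph: verifying that removing redundant generators and renumbering genuinely preserves both the inequalities of Theorem \ref{Corollary4*} and the combinatorial meaning of $\overline n_i$, and then that the resulting chain is finite (this uses $\Gamma_\nu\cong\ZZ$); once one is in the free presentation with all $\overline n_i\ge 2$, the rest is the standard combinatorial proof that a free numerical semigroup is symmetric.
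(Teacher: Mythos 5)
Your proof is correct and establishes the same intermediate facts as the paper — that $\Lambda<\infty$ with $\overline n_\Lambda<\infty$ (using $\Gamma_\nu\cong\ZZ$), and that $\overline n_i\beta_i\in S(\beta_0,\ldots,\beta_{i-1})$ for all $1\le i\le\Lambda$, obtained exactly as in the paper from Lemma \ref{Lemma2} together with the chain of inequalities from Theorem \ref{Corollary4*} — but it diverges at the final step. Where the paper simply observes that these conditions are the hypotheses of Proposition 2.1 of \cite{H} and cites Herzog's result that such semigroups are symmetric, you reprove that fact from scratch: you build the canonical expansion $s=c_0\beta_0+\sum_{i\ge 1}c_i\beta_i$ with $c_0\in\ZZ$, $0\le c_i<\overline n_i$, show by digit-reduction that membership in $S^R(\nu)$ is equivalent to $c_0\ge 0$, and then exhibit $F=-\beta_0+\sum_{i\ge 1}(\overline n_i-1)\beta_i$ as a Frobenius element by checking that complementing the digits $c_i\mapsto \overline n_i-1-c_i$ and replacing $c_0\mapsto -1-c_0$ produces the canonical expansion of $F-s$ and flips the sign condition on $c_0$. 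This is the standard self-contained combinatorial proof that a free numerical semigroup is symmetric, so your argument is mathematically equivalent to invoking Herzog but more elementary and explicit, at the cost of some length. Two minor comments: the cleanup in your first paragraph (discarding $\beta_i$ with $\overline n_i=1$ and renumbering) is harmless but unnecessary, since your canonical-form argument works unchanged when some $\overline n_i=1$ — in that case $c_i$ is forced to be $0$ and the corresponding term of $F$ vanishes; and the paper treats the finiteness of the generating set as already available from the discreteness of $\nu$ (see Corollary \ref{CorollaryN1} and the preceding discussion), so your Noetherian ascending-chain argument, while correct, is rederiving a known point.
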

 
 \begin{proof} By Theorem \ref{Corollary4*}, and since $\nu$ is discrete of rank 1, there exists a finite set
 $$
 \beta_0<\beta_1<\cdots<\beta_{\Lambda}
 $$
 such that $S^{\nu}(R)=S(\beta_0,\beta_1,\ldots,\beta_{\Lambda})$ and 
 $\beta_{i+1}>\overline n_i\beta_i$ for $1\le i<\Lambda$, where $\overline n_i=[G(\beta_0,\ldots,\beta_i):G(\beta_0,\ldots,\beta_{i-1})]$. We identify the value group $\Gamma_{\nu}$ with
 $\ZZ$. Then we calculate that
 $$
 {\rm lcm}\left({\rm gcd}(\beta_0,\ldots,\beta_{i-1}),\beta_i\right)=\overline n_i\beta_i
 $$
 for $1\le i\le \Lambda$. We have that $\overline n_i\beta_i\ge\beta_i>\overline n_{i-1}\beta_{i-1}$ for $2\le i\le \Lambda$. By Lemma \ref{Lemma2},
 we have that $\overline n_i\beta_i\in S(\beta_0,\ldots,\beta_{i-1})$ for $2\le i\le \Lambda$. Since $\beta_0$ and $\beta_1$ are both positive, we have that $\overline n_1\beta_1\in S(\beta_0)$. Thus
 the criteria of Proposition 2.1 \cite{H} is satisfied, so that $S^R(\nu)$ is symmetric.
 \end{proof}

\begin{Example} There exists  a semigroup $S$ which satisfies the sufficient conditions 1) and 2) of Theorem \ref{Corollary4*}, such that if $(R,\mathfrak m_R)$ is a 2-dimensional regular local ring dominated by a valuation $\nu$ such that $S^R(\nu)=S$, then $R/\mathfrak m_R=V_{\nu}/\mathfrak m_{\nu}$; that is, there can be no
residue field extension.
\end{Example}

\begin{proof} Define $\beta_i\in\QQ$ by
\begin{equation}\label{eqV1}
\beta_0=1,\,\beta_1=\frac{3}{2},\mbox{ and }\beta_i=2\beta_{i-1}+\frac{1}{2^i}\mbox{ for }i\ge 2.
\end{equation}

Let $S=S(\beta_0,\beta_1,\ldots)$ be the semigroup generated by $\beta_0,\beta_1,\ldots$. Observe that $\overline{n}_i = 2, \forall i \geq 1$, $\beta_0<\beta_1<\cdots$ is the minimal sequence of generators of $S$ and $S$ satisfies conditions 1) and 2) of Theorem \ref{Corollary4*}. The group $\Gamma=G(\beta_0,\beta_1,\ldots)$ generated by $S$ is $\Gamma=\frac{1}{2^{\infty}}\ZZ=\cup_{i=0}^{\infty}\frac{1}{2^i}\ZZ$.

Now suppose that $(R,\mathfrak m_R)$ is a regular local ring of dimension 2, with residue field $\mathfrak k$ and $\nu$ is a valuation of the quotient field of $R$ which dominates $R$ such that $S^R(\nu)=S$. Since $\Gamma_{\nu}=\frac{1}{2^{\infty}}$ is not discrete, 
we have by Proposition \ref{Prop17} that  $\nu$ extends uniquely to a valuation $\hat\nu$ of the quotient field of $\hat R$ which dominates $\hat R$ and $S^{\hat\nu}(\hat R)=S$.

We will now show that 
$V_{\nu}/\mathfrak m_{\nu}=V_{\hat\nu}/\mathfrak m_{\hat\nu}$. 
 Suppose that $f\in \hat R$. Since $\hat\nu$ has rank 1, there exists
a positive integer $n$ such that $\hat\nu(f)<n\nu(\mathfrak m)$. There exists $f'\in  R$ such that
$f''=f-f'\in \mathfrak m_R^n\hat R$. Thus $\nu(f)=\nu(f')$.
Suppose that $h\in  V_{\hat \nu}/\mathfrak m_{\hat \nu}$. Then $h=\left[\frac{f}{g}\right]$ where $f,g\in \hat R$ and
$\nu(f)=\nu(g)$. Write $f=f'+f''$ and $g=g'+g''$ where $f',g'\in R$ and $f'',g''\in \hat R$ satisfy $\nu(f'')>\nu(f)$
and $\nu(g'')>\nu(g)$. Then $\left[\frac{f}{g}\right]=\left[\frac{f'}{g'}\right]\in V_{\nu}/\mathfrak m_{\nu}$.

We also have $\mathfrak k = R / \mathfrak m_R = \hat R/ \mathfrak m_{\hat R}$. By Theorem \ref{Theorem3*}, there exists $\alpha_i \in V_{\hat v / M_{\hat v}}$ for $i \geq 1$ such that $V_{\hat v / M_{\hat v}} = \mathfrak k(\alpha_1, \alpha_2, ...)$ and if $d_i = [\mathfrak k(\alpha_1,..., \alpha_i) : \mathfrak k(\alpha_1,..., \alpha_{i-1})]$ then
\begin{equation}\label{eqV2}
\beta_{i+1} \geq \overline{n}_{i}d_i\beta_i, \forall i \geq 1,
\end{equation}
so that
\begin{equation}\label{eqV3}
[V_{\hat\nu}/\mathfrak m_{\hat\nu} : \mathfrak k] = \prod_{i=1}^{\infty}[\mathfrak k(\alpha_1,..., \alpha_i) : \mathfrak k(\alpha_1,..., \alpha_{i-1})] =\prod_{i=1}^{\infty}d_i.
\end{equation}
On the other hand, since $\beta_i \geq \beta_1 = \frac{3}{2}, \forall i \geq 1$, we have
\begin{equation}\label{eqV4}
\beta_{i+1} = 2\beta_i+\frac{1}{2^{i+1}} \leq 4\beta_i + \frac{1}{2^{i+1}} - 3 < 4\beta_i.
\end{equation}
 From \eqref{eqV2}, \eqref{eqV3} and  \eqref{eqV4} we have $d_i = 1,\forall i \geq 1$ so that $[V_{\hat\nu}/\mathfrak m_{\hat\nu} : \mathfrak k] = 1$.

\end{proof}

\section{Birational extensions}\label{RLR2}

Suppose that $R$ is a regular local ring of dimension two which is dominated by a valuation $\nu$. Let $\mathfrak k=R/\mathfrak m_R$. The quadratic transform $R_1$ of $R$ along $\nu$ is defined as follows. Let $u,v$ be a system of regular parameters in $R$,
where we may assume that $\nu(u)\le \nu(v)$. Then $R[\frac{v}{u}]\subset V_{\nu}$. Let 
$$
R_1=R\left[\frac{v}{u}\right]_{R[\frac{v}{u}]\cap \mathfrak m_{\nu}}.
$$
$R_1$ is a two dimensional regular local ring which is dominated by $\nu$. Let
\begin{equation}\label{eqX3}
R\rightarrow T_1\rightarrow T_2\cdots 
\end{equation}
be the sequence of quadratic transforms along $\nu$, so that $V_{\nu}=\cup T_i$ (\cite{Ab1}), and
$L=V_{\nu}/\mathfrak m_{\nu}=\cup T_i/\mathfrak m_{T_i}$.
Suppose that $x,y$ are regular parameters in $R$. 

\begin{Theorem}\label{birat}
Let $P_0=x$, $P_1=y$ and $\{P_i\}$ be the sequence of elements of $R$ constructed in Theorem \ref{Theorem1*}. Suppose that $\Omega\ge 2$. Then there exists some smallest value $i$ in the sequence (\ref{eqX3}) such that
the divisor of $xy$ in $\mbox{Spec}(T_i)$ has only one component. Let $R_1=T_i$.
Then $R_1/\mathfrak m_{R_1}\cong \mathfrak k(\alpha_1)$, and there exists $x_1\in R_1$ and $w\in\ZZ_+$ such that
$x_1=0$ is a local equation of the exceptional divisor of $\mbox{Spec}(R_1)\rightarrow \mbox{Spec}(R)$, and $Q_0=x_1$, $Q_1=\frac{P_2}{x_1^{wn_1}}$ are regular parameters in $R_1$. We have that
$$
Q_i=\frac{P_{i+1}}{Q_0^{w n_1\cdots n_i}}
$$
for $1\le i< \max\{\Omega,\infty\}$
satisfy the conclusions  of Theorem \ref{Theorem1*} (as interpreted by Remark \ref{RemarkH1}) for the ring $R_1$.
\end{Theorem}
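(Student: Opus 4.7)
The plan is to trace the quadratic transform sequence $R \to T_1 \to T_2 \to \cdots$ explicitly up to $R_1$, make the coordinate change fully concrete, and then verify that the $Q_i$ satisfy the conclusions of Theorem \ref{Theorem1*} in $R_1$. Without loss of generality, $\nu(x) \le \nu(y)$. Since $\Omega \ge 2$, we have $\overline n_1 < \infty$, so $\beta_0$ and $\beta_1$ generate a subgroup of $\Gamma_\nu$ with a smallest positive generator $\beta_0'$. At each $T_j$ preceding $R_1$ both (total) transforms of $x=0$ and $y=0$ meet the center, so the next transform divides the larger-valued parameter by the smaller; this mimics the Euclidean algorithm on $(\beta_0,\beta_1)$ and, in finitely many steps, reaches a ring $T_j$ with regular parameters $(u,v)$, monomial in $x,y$, with $\nu(u)=\nu(v)=\beta_0'$. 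Direct bookkeeping of the substitutions gives $v/u = y^{\overline n_1}/x^{w_0(1)}$ up to inversion, so its residue equals $\alpha_1$. Since $\alpha_1 \ne 0$, the next transform has center $(u, f_1(v/u))$, where $f_1$ is the minimal polynomial of $\alpha_1$; set $R_1 = T_{j+1}$, $x_1 = u$. Then $R_1/\mathfrak{m}_{R_1} \cong \mathfrak k(\alpha_1)$, the ratio $v/u$ is a unit, $v = (v/u)\cdot x_1$ becomes $x_1$ times a unit, and the divisor of $xy$ at $\mathfrak m_{R_1}$ reduces to the single component $x_1=0$.

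Next I would produce $Q_1$. Setting $w = \beta_1/\beta_0' \in \ZZ_+$ and $a = \beta_0/\beta_0'$, the Euclidean substitutions give $x = x_1^a \cdot u_0$ and $y = x_1^w \cdot u_1$ with $u_0, u_1 \in R_1^\times$. Plugging this into (\ref{eqM1}) for $i=1$, every term of $P_2 = P_1^{n_1} + \sum_k c_k P_0^{\sigma_{1,0}(k)} P_1^{\sigma_{1,1}(k)}$ has common $\nu$-value $n_1\beta_1 = wn_1\beta_0'$, so $x_1^{wn_1}$ factors out uniformly. By the definition of the $b_{1,t}$ in $A(1)$, the bracketed quotient reduces modulo $\mathfrak m_{R_1}$ to $\sum_t b_{1,t}\,\overline{(v/u)}^t = f_1(\alpha_1) = 0$; thus it equals a unit of $R_1$ times $f_1(v/u)$, which is a regular parameter transverse to $x_1$. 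Hence $Q_1 := P_2/x_1^{wn_1}$ is a regular parameter and $(Q_0, Q_1) = (x_1, Q_1)$ is a regular system of parameters of $R_1$.

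For $i \ge 2$ I would substitute $P_j = x_1^{wn_1\cdots n_{j-1}} Q_{j-1}$ for $2 \le j \le i$ together with the expressions for $P_0, P_1$ into $P_{i+1} = P_i^{n_i} + \sum_k c_k P_0^{\sigma_{i,0}(k)} \cdots P_i^{\sigma_{i,i}(k)}$. All terms share $\nu$-value $n_i\beta_i = wn_1\cdots n_i\,\beta_0'$, so the $x_1$-exponent is constant equal to $wn_1\cdots n_i$ and factors out, yielding
\[
Q_i \;=\; Q_{i-1}^{n_i} \;+\; \sum_k c_k'\, Q_0^{\tau_0(k)} Q_1^{\tau_1(k)} \cdots Q_{i-1}^{\tau_{i-1}(k)},
\]
where the $c_k'$ are the $c_k$ times units and the exponents $\tau$ are obtained from the original $\sigma$ by absorbing the $P_0, P_1$ contributions into $Q_0$. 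The bound $0 \le \tau_s < n_s^{\mathrm{new}}$ is inherited since the shifted $n_s^{\mathrm{new}}$ equals $n_{s+1}$. Condition 1), $\nu(Q_i) > n_i \nu(Q_{i-1})$, follows from $\nu(P_{i+1}) > n_i\nu(P_i)$ after subtracting the common $x_1$-exponent, and condition 3) holds because $G(\nu(Q_0),\ldots,\nu(Q_i))$ is just the tail of the original group chain shifted by $G(\beta_0) \subseteq G(\beta_0')$.

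The main obstacle is condition 2) of Theorem \ref{Theorem1*} for the $Q_i$, which asserts linear independence of residues of equivalued monomials over the enlarged residue field $\mathfrak k(\alpha_1)$. The cleanest route is to bypass a direct proof: Steps 1--3 exhibit $(Q_0, Q_1)$ as a regular system of parameters of $R_1$ together with elements $Q_i$ matching (\ref{eq11*}) (in the unit form of Remark \ref{RemarkH1}), so the algorithm of Theorem \ref{Theorem1*} applied to $R_1$ starting from $(Q_0, Q_1)$ reproduces exactly the $Q_i$ (up to the free coefficient choices it permits), and thereby hands 1)--4) back for free. If a direct verification of 2) is wanted, one clears denominators to rewrite a hypothetical $\mathfrak k(\alpha_1)$-linear dependence as a $\mathfrak k$-linear dependence among residues of equivalued monomials in $P_0, \ldots, P_{i+1}$ by expanding coefficients in the $\mathfrak k$-basis $\{1, \alpha_1, \ldots, \alpha_1^{d_1-1}\}$, contradicting 2) for the original sequence.
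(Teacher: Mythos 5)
Your construction of $R_1$, the identification $R_1/\mathfrak m_{R_1}\cong\mathfrak k(\alpha_1)$, the change of variables $x=x_1^{\overline n_1}y_1^a$, $y=x_1^w y_1^b$, and the verification that $Q_1=P_2/x_1^{wn_1}$ is a regular parameter all match the paper's argument, and that part is sound.

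The problem is the last step. The ``cleanest route'' --- asserting that because the $Q_i$ have the form (\ref{eq11*}) (as in Remark \ref{RemarkH1}) the algorithm of Theorem \ref{Theorem1*} ``reproduces exactly the $Q_i$ and hands 1)--4) back for free'' --- does not work. Having the right \emph{shape} is necessary but far from sufficient: the output of the algorithm is characterized by the full package $A(i)$, $B(i)$, $C(i)$, $D(i)$ from the proof of Theorem \ref{Theorem1*}. To identify your $Q_i$ with the algorithm's output in $R_1$ you must actually prove, for the $Q_i$, that $\hat\beta_{i+1}>n_{i+1}\hat\beta_i$ (statement 1)), that the $Q_0$-exponents that appear after the substitution are nonnegative, that the residue $\hat\alpha_i=[Q_i^{\overline n_{i+1}}/V_i]$ satisfies the predicted minimal polynomial over $\mathfrak k(\alpha_1,\hat\alpha_1,\dots,\hat\alpha_{i-1})$ (statement 4)), and, crucially, the linear independence of residues of equivalued monomials over $\mathfrak k(\alpha_1)$ (statement 2), i.e.\ $D(r)$). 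None of these is immediate from the form (\ref{eq11*}). In the paper these are carried out explicitly: the positivity of the $Q_0$-exponent $\hat j_0(s,t)$ is the chain of inequalities (\ref{eqJ2}); the identification $\hat\alpha_i=\alpha_{i+1}\alpha_1^{\epsilon(aw_0(i+1)+bw_1(i+1))}$ and its minimal polynomial $g_i(u)$ is (\ref{eqJ4}); and the residue computation confirming $A(i)$ for $Q_i$ is (\ref{eqJ10}).

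Your fallback for condition 2) --- expand a hypothetical $\mathfrak k(\alpha_1)$-dependence over the $\mathfrak k$-basis $\{1,\alpha_1,\dots,\alpha_1^{d_1-1}\}$ and reduce to $D(r+1)$ for the $P_i$ --- is the right idea, but the step you call ``clearing denominators'' hides the real work. A $Q$-monomial $Q_0^{j_0(l)}Q_1^{j_1(l)}\cdots Q_r^{j_r(l)}$ rewrites as $Q_0^{\hat j_0(l)}P_2^{j_1(l)}\cdots P_{r+1}^{j_r(l)}$, and $Q_0=x_1$ is not a monomial in $x$ and $y$ with nonnegative exponents. You need to convert $x_1^{\hat j_0(l)}$ times a power of $\alpha_1$ into a genuine monomial $x^{\delta_0}y^{\delta_1}$ with $0\le\delta_1<\overline n_1$ (so that the bound $j_k<n_k$ required by condition 2) is respected), and you must be able to do this for all $d_1$ shifts $\alpha_1^{z-i\epsilon}$, $0\le i\le d_1-1$, simultaneously. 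This is exactly the identity (\ref{eqZ5}) in the paper (preceded by multiplying by a large $Q_0^t$ to make $\hat j_0(l)\ge n_1 w$), and it is the crux of the verification. Without that lemma, the reduction to $D(r+1)$ for the $P_i$ does not go through, because the monomials you produce may violate the exponent constraints that property 2) of Theorem \ref{Theorem1*} requires.
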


\begin{proof} We use the notation of Theorem \ref{Theorem1*} and its proof for $R$ and the $\{P_i\}$. Recall that $U_1=U^{w_0(1)}$. Let $w=w_0(1)$. Since $\overline n_1$ and $w$ are relatively prime, there exist $a,b\in\NN$ such that 
$$
\epsilon:=\overline n_1b-wa=\pm 1.
$$
 Define
elements of the quotient field of $R$ by
$$
x_1=(x^by^{-a})^{\epsilon}, y_1=(x^{-w}y^{\overline n_1})^{\epsilon}.
$$
 We have that
\begin{equation}\label{eqZ1}
x=x_1^{\overline n_1}y_1^a, y=x_1^wy_1^b.
\end{equation}
Since $\overline n_1\nu(y)=w\nu(x)$, it follows that
$$
\overline n_1\nu(x_1)=\nu(x), \overline\nu(y_1)=0.
$$
We further have that
\begin{equation}\label{eqZ3}
\alpha_1=[y_1]^{\epsilon}\in L.
\end{equation}
Let $A=R[x_1,y_1]\subset V_{\nu}$ and $\mathfrak m_A=\mathfrak m_{\nu}\cap A$. $R\rightarrow A_{\mathfrak m_A}$ factors as a product
of quadratic transforms such that $xy$ has two distinct irreducible factors in all intermediate rings. Thus $A=R_1$. 
Recall that 
$$
f_1(u)=u^{d_1}+b_{1,d_1-1}u^{d-1-1}+\cdots+b_{1,0}
$$
is the minimal polynomial of $\alpha_1=\left[\frac{y^{\overline n_1}}{x^w}\right]$ over $\mathfrak k$, and from (\ref{eqM1}) of $A(1)$,
\begin{equation}\label{eqZ2}
P_2= y^{\overline n_1d_1}+a_{1,d_1-1}x^{w}y^{\overline n_1(d_1-1)}+\cdots+a_{1,0}x^{d_1w}.
\end{equation}
Substituting (\ref{eqZ1}) into (\ref{eqZ2}), we find that
$$
P_2=x_1^{wn_1}\left(y_1^{b\overline n_1 d_1}+a_{1,d_1-1}y_1^{aw+b\overline n_1 (d_1-1)}+\cdots +a_{1,0}y_1^{ad_1w}\right).
$$
Thus
$$
Q_1=\frac{P_2}{x_1^{wn_1}}\in R_1.
$$
We calculate
\begin{equation}\label{eqZ4}
\nu(Q_1)=\nu(P_2)-wn_1\nu(x_1)=\nu(P_2)-n_1\nu(P_1)>0
\end{equation}
 Thus $x_1,Q_1\in \mathfrak m_{R_1}$. 
 
 Suppose that $\epsilon=1$. Then since 
 $$
 Q_1=y_1^{awd_1}\left( y_1^{d_1}+a_{1,d_1-1}y_1^{d_1-1}+\cdots+a_{1,0}\right)
 $$
 and 
 $y_1$ is a unit in $R_1$, we have that
 $$
R_1/(x_1,Q_1)\cong  \mathfrak k[y_1]/(f(y_1))\cong\mathfrak k(\alpha_1).
$$

Suppose that $\epsilon=-1$. Let
$$
h(u)=y_1^{d_1}+\frac{b_{1,1}}{b_{1,0}}y_1^{d_1-1}+\cdots +\frac{1}{b_{1,0}},
$$
which is the minimal polynomial of $\alpha_1^{-1}$ over $\mathfrak k$.
Since 
 $$
 Q_1=y_1^{b\overline n_1d_1}\left( 1+a_{1,d_1-1}y_1+\cdots+a_{1,0}y_1^{d_1}\right)
 $$
 and 
 $y_1$ is a unit in $R_1$, we have that
 $$
R_1/(x_1,Q_1)\cong  \mathfrak k[y_1]/(h(y_1))\cong\mathfrak k(\alpha_1^{-1})=\mathfrak k(\alpha_1).
$$

Now define $\beta_i= \nu(P_i)$ and $\hat\beta_i=\nu(Q_i)$ for $i\ge 0$. 
We have 
$$
\hat\beta_i=\nu(P_{i+1})-wn_1\cdots n_i\hat\beta_0
$$
for $i\ge 1$.

Since $\mbox{gcd}(w,\overline n_1)=1$, we have that $G(\hat\beta_0)=G(\beta_0,\beta_1)$.
Thus 
$$
\overline n_{i+1}=[G(\hat\beta_0,\ldots, \hat \beta_i):G(\hat\beta_0,\ldots, \hat\beta_{i-1})]
$$
for $i\ge 1$.

 We will leave the proof that the analogue of $A(1)$ of Theorem \ref{Theorem1*} holds for  $Q_1$ in $R_1$ for the reader, as is an easier variation of the following
inductive statement, which we will prove.

Assume that $2\le i<\Omega-1$ and the analogue of $A(j)$ of Theorem \ref{Theorem1*} holds for $Q_j$ in $R_1$ for $j< i$. We will prove that the analogue of $A(i)$ of Theorem \ref{Theorem1*} holds for  $Q_{i}$ in $R_1$.

In particular, we assume that
\begin{equation}\label{eqJ1}
\hat\beta_{j+1}>n_{j+1}\hat\beta_j
\end{equation} 
for $1\le j\le i-1$.

Define
\begin{equation}\label{eqJ6}
\begin{array}{lll}
V_i&=&U_{i+1}Q_0^{-wn_1n_2\cdots n_i\overline n_{i+1}}y_1^{-(aw_{0}(i+1)+bw_{1}(i+1))}\\
&=& Q_0^{\hat w_{0}(i+1)}Q_1^{w_{2}(i+1)}\cdots Q_{i-1}^{w_{i}(i+1)}
\end{array}
\end{equation}
where
$$
\hat w_{0}(i+1)=\overline n_1w_{0}(i+1)+ww_{1}(i+1)+wn_1w_{2}(i+1)+\cdots +wn_1n_2\cdots n_{i-1}w_{i}(i+1)
-wn_1n_2\cdots n_i\overline n_{i+1}.
$$
We have that 
$$
\nu(Q_{i}^{\overline n_{i+1}})=\overline n_{i+1}\hat\beta_i = \overline n_{i+1}\nu(P_{i+1})-wn_1n_2\cdots n_i\overline n_{i+1}\hat \beta_0=\nu(V_i).
$$
Thus
$$
\overline n_{i+1}\hat \beta_i=\hat w_{0}(i+1)\hat \beta_0+\hat w_{2}(i+1)\hat \beta_1+\hat w_{3}(i+1)\hat\beta_2+\cdots
+w_{i}(i+1)\hat\beta_{i-1}.
$$
Recall that $0\le w_{j}(i+1)<n_j$ for $1\le j\le i$ and apply (\ref{eqJ1}) to obtain
\begin{equation}\label{eqJ2}
\begin{array}{lll}
\hat w_{0}(i+1)\hat\beta_0&=& \overline n_{i+1}\hat\beta_i-w_{i}(i+1)\hat\beta_{i-1}-\cdots
-w_{3}(i+1)\hat\beta_2-w_{2}(i+1)\hat\beta_1\\
&\ge&  \hat\beta_i-(n_i-1)\hat\beta_{i-1}-\cdots -(n_3-1)\hat\beta_2-(n_2-1)\hat\beta_1\\
&>& \hat\beta_i-(n_i-1)\hat\beta_{i-1}-\cdots -(n_4-1)\hat\beta_3-n_3\hat\beta_2\\
&&\vdots\\
&\ge & \hat\beta_i-n_i\hat\beta_{i-1}>0.
\end{array}
\end{equation} 
Thus $V_i\in R_1$.
We have
\begin{equation}\label{eqJ3}
\frac{Q_i^{\overline n_{i+1}}}{V_i}=\left(\frac{P_{i+1}^{\overline n_{i+1}}}{U_{i+1}}\right)y_1^{aw_{0}(i+1)+bw_{1}(i+1)}.
\end{equation}
Let
\begin{equation}\label{eqJ4}
\hat\alpha_i=\left[\frac{Q_i^{\overline n_{i+1}}}{V_i}\right]=\alpha_{i+1}\alpha_1^{\epsilon(aw_{0}(i+1)+bw_{1}(i+1))}\in L
\end{equation}
From the minimal  polynomial $f_{i+1}(u)$ of $\alpha_{i+1}$, we see that
$$
g_i(u)=u^{d_{i+1}}+b_{i+1,d_{i+1}-1}\alpha_1^{\epsilon(aw_{0}(i+1)+bw_{1}(i+1))d_{i+1}}u^{d_{i+1}-1}+\cdots +b_{i+1,0}\alpha_1^{\epsilon(aw_{0}(i+1)+bw_{1}(i+1))d_{i+1}}
$$
is the minimal polynomial of $\hat\alpha_i$ over $\mathfrak k(\alpha_1)(\hat\alpha_1,\ldots,\hat\alpha_{i-1})$.

Now from  equation (\ref{eqM1}) of $A(i+1)$ determining $P_{i+1}$, we obtain
\begin{equation}\label{eqJ5}
\begin{array}{lll}
Q_{i+1}&=& \frac{P_{i+2}}{Q_0^{wn_1n_2\cdots n_{i+1}}}\\
&=& Q_i^{\overline n_{i+1}d_{i+1}}+\sum_{t=0}^{d_{i+1}-1}\left(\sum_{s=1}^{\lambda_t}a_{s,t}y_1^{aj_0(s,t)+bj_1(s,t)}
Q_0^{\hat j_0(s,t)}Q_1^{j_2(s,t)}\cdots Q_{i-1}^{j_i(s,t)}\right)Q_i^{t\overline n_{i+1}}
\end{array}
\end{equation}
where
$$
\hat j_0(s,t)=\overline n_1j_0(s,t)+wj_1(s,t)+wn_1j_2(s,t)+\cdots+ wn_1n_2\cdots n_{i-1}j_{i}(s,t)-(d_{i+1}-t)wn_1n_2\cdots n_{i}\overline n_{i+1}.
$$
Recall that $0\le j_k(s,t)<n_k$ for $1\le k\le i$. We further have that
$$
\nu(Q_0^{\hat j_0(s,t)}Q_1^{j_2(s,t)}\cdots Q_{i-1}^{j_i(s,t)})=(d_{i+1}-t)\overline n_{i+1}\hat\beta_i\ge \hat \beta_i.
$$
By a similar argument to (\ref{eqJ2}), we obtain that $\hat j_0(s,t)>0$ for all $s,t$.

By the definition of $Q_{i+1}$, (\ref{eqJ6}) and (\ref{eqJ5}), we have
\begin{equation}\label{eqJ10}
\begin{array}{l}
y_1^{(aw_{0}(i+1)+bw_{1}(i+1))d_{i+1}}\frac{P_{i+2}}{U_{i+1}^{d_{i+1}}} = \frac{Q_{i+1}}{V_i^{d_{i+1}}}\\
\,\,\,\,\,= \left(\frac{Q_{i}^{\overline n_{i+1}}}{V_i}\right)^{d_{i+1}}
+\sum_{t=0}^{d_{i+1}-1}\left(\sum_{s=1}^{\lambda_t}y_1^{aj_0(s,t)+bj_1(s,t)}
\frac{Q_0^{\hat j_0(s,t)}Q_1^{j_2(s,t)}\cdots Q_{i-1}^{j_i(s,t)}}{V_i^{d_{i+1}-t}}\right)\left(\frac{Q_i^{\overline n_{i+1}}}{V_i}\right)^t
\end{array}
\end{equation}
We have
$$
\begin{array}{l}
\left[\sum_{s=1}^{\lambda_t}a_{s,t}y_1^{aj_0(s,t)+bj_1(s,t)}\frac{Q_0^{\hat j_0(s,t)}Q_1^{j_2(s,t)}\cdots Q_{i-1}^{j_i(s,t)}}{V_i^{d_{i+1}-t}}\right]\\
\,\,\,\,\,=\left[\sum_{s=1}^{\lambda_t}a_{s,t}y_1^{(aw_{0}(i+1)+bw_{1}(i+1))(d_{i+1}-t)}\frac{P_0^{j_0(s,t)}P_1^{j_1(s,t)}\cdots P_i^{j_i(s,t)}}
{U_{i+1}^{d_{i+1}-t}}\right]\\
\,\,\,\,\,=b_{i+1,t}\alpha_1^{\epsilon(aw_{0}(i+1)+bw_{1}(i+1))(d_{i+1}-t)}
\end{array}
$$
for $0\le t\le d_{i+1}-1$ and
$$
\left[\frac{Q_{i+1}}{V_i^{d_{i+1}}}\right]=g_i(\hat\alpha_i)=0.
$$
Thus 
$$
\begin{array}{lll}
\hat\beta_{i+1}&=&\nu(Q_{i+1})>d_{i+1}\nu(V_i)=d_{i+1}(\nu(U_{i+1})-wn_1n_2\cdots n_i\overline n_{i+1}\hat\beta_0)\\
&=&n_{i+1}(\nu(P_{i+1})-wn_1n_2\cdots n_i\hat\beta_0)=n_{i+1}\hat\beta_i.
\end{array}
$$
We have thus established that $A(i)$ holds for $Q_{i}$ in $R_1$. By induction on $i$, we have that $A(i)$ of Theorem
\ref{Theorem1*} holds for $Q_i$ in $R_1$ for $1\le i<\Omega-1$.
\vskip .2truein
We now will show that $D(r)$ of Theorem \ref{Theorem1*} holds for the $Q_i$ in $R_1$ for  all $r$. We begin by establishing the following statement:
\vskip .2truein
\noindent{\it Suppose that $\lambda\ge n_1w$ is as integer. Then there exist $\delta_0,\delta_1\in\NN$ with $0\le\delta_1<\overline n_1$ such that}
\begin{equation}\label{eqZ5}
x^{\delta_0+iw}y^{\delta_1+(d_1-1-i)\overline n_1}=x_1^{\lambda}y_1^{z-i\epsilon}
\end{equation}
\noindent{\it for $0\le i\le d_1-1$ where $z=a\delta_0+b(\delta_1+(d_1-1)\overline n_1$.}
\vskip .2truein
We first  prove (\ref{eqZ5}). We have that 
$$
(\lambda\epsilon b -rw)\overline n_1+(r\overline n_1-\lambda\epsilon a)w=\lambda
$$
for all $r\in\ZZ$. Choose $r$ so that $\delta_1=r\overline n_1-\lambda\epsilon  a$ satisfies $0\le \delta_1<\overline n_1$.
Set 
$$
\delta_0=(\lambda\epsilon b-rw)-(d_1-1)w.
$$
 Then
$$
(\lambda \epsilon b-rw)\overline n_1=\lambda-\delta_1w\ge n_1w-(\overline n_1-1)w=(n_1-\overline n_1+1)w
$$
so
$$
\delta_0\ge (n_1-\overline n_1-d_1+2)w=\left((\overline n_1-1)(d_1-1)+1\right)w\ge w.
$$
Substituting (\ref{eqZ1}) in $x^{\delta_0+iw}y^{\delta_1+(d_1-1-i)\overline n_1}$, we obtain the formula (\ref{eqZ5}).
\vskip .2truein
We now will prove that statement $D(r)$  of Theorem \ref{Theorem1*} holds  for the $Q_i$ in $R_1$  for all $r$.

Suppose that we have monomials $Q_0^{j_0(l)}Q_1^{j_1(l)}\cdots Q_r^{j_r(l)}$ for $1\le l \le m$ such that
$$
\nu(Q_0^{j_0(l)}Q_1^{j_1(l)}\cdots Q_r^{j_r(l)})=\nu(Q_0^{j_0(1)}Q_1^{j_1(1)}\cdots Q_r^{j_r(1)})
$$
 for $1\le l\le m$, and that we
have a dependence relation in $L=V_{\nu}/\mathfrak m_{\nu}$.
$$
0=e_1+e_2\left[\frac{Q_0^{j_0(2)}Q_1^{j_1(2)}\cdots Q_r^{j_r(2)}}{Q_0^{j_0(1)}Q_1^{j_1(1)}\cdots Q_r^{j_r(1)}}\right]
+\cdots+e_m\left[\frac{Q_0^{j_0(m)}Q_1^{j_1(m)}\cdots Q_r^{j_r(m)}}{Q_0^{j_0(1)}Q_1^{j_1(1)}\cdots Q_r^{j_r(1)}}\right]
$$
with $e_i\in\mathfrak k(\alpha_1)$ (and some $e_i\ne 0$). Multiplying the $Q_0^{j_0(l)}Q_1^{j_1(l)}\cdots Q_r^{j_r(l)}$ for $1\le l\le m$ by a common term $Q_0^t$ with $t$
a sufficiently large positive integer, we may assume that 
$$
\hat j_0(l)=j_0(l)-j_1(l)wn_1-j_2(l)wn_1n_2-\cdots -j_r(l)wn_1n_2\cdots n_r\ge n_1w
$$
for $1\le l\le m$. We have that
$$
Q_0^{j_0(l)}Q_1^{j_1(l)}\cdots Q_r^{j_r(l)}=Q_0^{\hat j_0(l)}P_2^{j_1(l)}\cdots P_{r+1}^{j_r(l)}.
$$
Since $\hat j_0(l)\ge wn_1$, (\ref{eqZ5}) implies that for each $l$ with $1\le l\le w$, there exist $\delta_0(l), \delta_1(l)$ with
$\delta_0(l),\delta_1(l)\in\NN$ and $0\le\delta_1(l)<\overline n_1$ such that
$$
P_0^{\delta_0(l)+iw}P_1^{\delta_1(l)+(d_1-1-i)\overline n_1}=y_1^{z(l)-i\epsilon}Q_0^{\hat j_0(l)}
$$
for $0\le i\le d_1-1$. The ordered set
$$
\{ \alpha_1^{\epsilon(z(l)-z(1))}, \alpha_1^{\epsilon(z(l)-z(1))-1},\cdots, \alpha_1^{\epsilon(z(l)-z(1))-(d_1-1)}\}
$$
is a $\mathfrak k$-basis of $\mathfrak k(\alpha_1)$ for all $l$ (since multiplication by $\alpha_1^{\epsilon(z(l)-z(1))+(d_1-1)}$ is a
$\mathfrak k$-vector space isomorphism of $\mathfrak k(\alpha_1)$, and thus takes a basis to a basis). Thus there exists
$e_{l,i}\in \mathfrak k$ such that
$$
e_l=\sum_{i=0}^{d_1-1}e_{l,i}\alpha_1^{\epsilon(z(l)-z(1))-i}. 
$$
Since some $e_{l,i}\ne 0$, we have a dependence relation 
$$
0=\sum_{l=1}^m\sum_{i=0}^{d_1-1}e_{l,i}\left[\frac{P_0^{\delta_0(l)+iw}P_1^{\delta_1(l)+(d_1-1-i)\overline n_1}P_2^{j_1(l)}\cdots
P_{r+1}^{j_r(l)}}
{P_0^{\delta_0(1)}P_1^{\delta_1(1)+(d_1-1)\overline n_1}P_2^{j_1(1)}\cdots
P_{r+1}^{j_r(1)}}\right],
$$
a contradiction to $D(r+1)$ of Theorem \ref{Theorem1*} for the $P_i$ in $R$. Thus we have established $D(r)$ of Theorem \ref{Theorem1*} for the $Q_i$ in $R_1$.
\end{proof}

\section{Polynomial rings in two variables}\label{Poly}

The algorithm of Theorem \ref{Theorem1*} is applicable when $R=\mathfrak k[x,y]$ is a polynomial ring over a field
and $\nu$ is a valuation which dominates the maximal ideal $(x,y)$ of $R$. In this case many of the calculations in this paper
become much simpler, as we now indicate (of course we take the coefficient set $CF$ to be the field $\mathfrak k$). In the case when $R$ is equicharacteristic, we can establish from the polynomial case
 the results of this paper using Cohen's structure theorem and Proposition \ref{Prop17} to reduce to the case of a polynomial ring
 in two variables.

 If $f\in R=\mathfrak k[x,y]$ is a nonzero polynomial, then we have an expansion
 $f=a_0(x)+a_1(x)y+\cdots +a_r(x)y^r$ where $a_i(x)\in\mathfrak k[x]$ for all $i$ and $a_r(x)\ne 0$. We define $\mbox{ord}_y(f)=r$,
 and say that $f$ is monic in $y$ if $a_r(x)\in\mathfrak k$. 
 We first establish the following formula. 
 \begin{equation}\label{eqZ60}
 \mbox{$P_i$ is monic in $y$ with }
 \mbox{deg}_yP_i=n_1n_2\cdots n_{i-1}\mbox{ for }i\ge 2.
 \end{equation}
 
 We establish (\ref{eqZ60}) by induction. In the expansion (\ref{eqM1}) of $P_{i+1}$, we have for $0\le t\le d_i-1$ and whenever $a_{s,t}\ne 0$, that $0\le j_k(s,t)<n_k$ for $1\le k\le i-1$. Thus
 $$
 \begin{array}{l}
 \mbox{deg}_y(P_0^{j_0(s,t)}P_1^{j_1(s,t)}\cdots P_{i-1}^{j_{i-1}(s,t)}P_i^{t\overline n_i})\\
 \,\,\,\,\,=j_1(s,t)+j_2(s,t)n_1+j_3(s,t)n_1n_2+\cdots +j_{i-1}n_1n_2\cdots n_{i-2}+t\overline n_in_1n_2\cdots n_{i-1}\\
 \,\,\,\,\,<n_1n_2\cdots n_i.
 \end{array}
 $$
 Thus $\mbox{deg}_yP_{i+1}=\mbox{deg}_yP_i^{n_i}=n_1n_2\cdots n_i$. We further see that $P_{i+1}$ is monic in $y$.
 
 Set $\sigma(0)=0$ and for $i\ge 1$ let 
 $$
 \sigma(i)=\min\{j\mid j>\sigma(i-1)\mbox{ and }n_j>1\}.
 $$
 Let $Q_i=P_{\sigma(i)}$. 
 We calculate (as long as we are not in the case $\Omega=\infty$ and $n_i=1$ for $i\gg 0$) that for $d\in\ZZ_+$, there exists
 a unique $r\in\ZZ_+$ and $j_1,\ldots, j_r\in\ZZ_+$ such that $0\le j_k<n_k$ for $1\le k\le r$ and
 $\mbox{deg}_y Q_1^{j_1}\cdots Q_r^{j_r}=d$. Let $M_d$ be this monomial. Since the monomials $M_d$ are monic in $y$,
 we see (continuing to assume that we are not in the case $\Omega=\infty$ and $n_i=1$ for $i\gg 0$) that if $f\in R=\mathfrak k[x,y]$ is nonzero with $\mbox{deg}_y(f)=d$, then there is a unique expression
 \begin{equation}\label{eqZ61}
 f=\sum_{i=0}^dA_i(x)M_i
 \end{equation}
 where $A_i(x)\in\mathfrak k[x]$, and
 \begin{equation}\label{eqZ62}
 \nu(f)=\min_i\{\mbox{ord}(A_i)\nu(Q_0)+\nu(M_i)\}.
 \end{equation}
 
 In the case when  $\Omega=\infty$ and $n_i=1$ for $i\gg 0$ we have a similar statement, but we may need to introduce a new polynomial $g$ of ``infinite value'' as in  Case 3 of Theorem \ref{Corollary3*}.

\section{The $A_2$ singularity}\label{norm}

\begin{Lemma}\label{Lemma3}
Let $\mathfrak k$ be an algebraically closed field, and let $A=\mathfrak k[x^2,xy,y^2]$, a subring  of the polynomial ring $B=\mathfrak k[x,y]$. Let $\mathfrak m=(x^2,xy,y^2)A$  and  $\mathfrak n=(x,y)B$. Suppose that $\nu$ is a rational nondiscrete
valuation dominating $B_{\mathfrak n}$, such that $\nu$ has a  generating sequence 
$$
P_0=x,P_1=y,P_2,\ldots
$$
 in $\mathfrak k[x,y]$ of the form of the conclusions of Theorem \ref{Theorem1*},
such that each $P_i$ is a $\mathfrak k$-linear combinations of monomials in $x$ and $y$ of odd degree,
and
$$
\beta_0=\nu(x), \beta_1=\nu(y),\beta_2=\nu(P_2),\ldots
$$
is the increasing sequence of minimal generators of $S^{B_{\mathfrak n}}(\nu)$,
with $\beta_{i+1}>\overline n_i\beta_i$ for $i\ge 1$, where 
$$
\overline n_i=[G(\beta_0,\ldots,\beta_i):G(\beta_0,\ldots,\beta_{i-1})].
$$
Then
$$
S^{A_{\mathfrak m}}(\nu)=
\left\{
\begin{array}{l}
a_0\beta_0+a_1\beta_1+\cdots+a_i\beta_i\mid i\in \NN, a_0,\ldots,a_i\in\NN\\
\mbox{ and }a_0+a_1\cdots+a_i\equiv 0\mbox{ mod } 2
\end{array}\right\}.
$$
\end{Lemma}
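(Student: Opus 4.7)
The approach is to combine the polynomial expansion of Section~\ref{Poly} with a direct parity analysis. Since $\beta_0<\beta_1<\cdots$ is the minimal generating sequence of $S^{B_\mathfrak n}(\nu)$, we have $\overline n_i>1$ and hence $n_i>1$ for all $i\ge 1$, so the subsequence $\{Q_j\}$ of Section~\ref{Poly} coincides with $\{P_j\}$, and both (\ref{eqZ61}) and (\ref{eqZ62}) apply. By (\ref{eqZ60}), each $P_i$ is monic in $y$ of degree $n_1n_2\cdots n_{i-1}$, so the monomials $M_d=P_1^{j_1}\cdots P_r^{j_r}$ (with $0\le j_k<n_k$ and $\deg_yM_d=d$) are monic in $y$ of distinct degrees and therefore $\mathfrak k[x]$-linearly independent in $B$.

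For the inclusion $\supseteq$, I would observe that since each $P_j$ is a $\mathfrak k$-linear combination of odd-degree monomials, the product $P_0^{a_0}P_1^{a_1}\cdots P_i^{a_i}$ is homogeneous of parity $\sum_ka_k\bmod 2$ in the $\ZZ/2\ZZ$-grading of $B$ by total degree. When $\sum_ka_k$ is even this product lies in $A$ and realizes $\sum_ka_k\beta_k$ in $S^{A_\mathfrak m}(\nu)$.

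For the reverse inclusion, I would take $\gamma\in S^{A_\mathfrak m}(\nu)$ and pick $g\in A$ with $\nu(g)=\gamma$ (any $A_\mathfrak m$-representative can be replaced by its numerator, whose $\nu$-value is unchanged since the denominator is a unit of $\nu$-value zero). Expand $g=\sum_{i=0}^dA_i(x)M_i$ with $A_i\in\mathfrak k[x]$, write $M_i=\prod_kP_k^{j_k^{(i)}}$, and let $\pi_i=\sum_kj_k^{(i)}\bmod 2$ be the $\ZZ/2\ZZ$-parity of $M_i$. Splitting each $A_i=A_i^++A_i^-$ into its even- and odd-parity parts in $\mathfrak k[x]$, the total odd-degree part of $g$ is $\sum_iA_i^{1-\pi_i}M_i$; this vanishes because $g\in A$. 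The $\mathfrak k[x]$-linear independence of the $M_i$'s then forces $A_i^{1-\pi_i}=0$ for every $i$, so each $A_i$ only involves monomials $x^l$ with $l\equiv\pi_i\pmod{2}$.

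Now apply (\ref{eqZ62}): $\gamma=\min_i\{{\rm ord}(A_i)\beta_0+\nu(M_i)\}$, so for an index $i_0$ achieving the minimum we have $\gamma={\rm ord}(A_{i_0})\beta_0+\sum_kj_k^{(i_0)}\beta_k$. Since ${\rm ord}(A_{i_0})$ equals the smallest exponent of $x$ appearing in $A_{i_0}$, it has parity $\pi_{i_0}\equiv\sum_kj_k^{(i_0)}\pmod{2}$; hence the coefficient sum ${\rm ord}(A_{i_0})+\sum_kj_k^{(i_0)}$ is even, yielding the required expression for $\gamma$. The main obstacle to avoid is the naive $\ZZ/2\ZZ$-grading argument on ${\rm gr}_\nu(B)$: the even and odd parts $f^{\pm}$ of a polynomial need not satisfy $\nu(f^{\pm})\ge\nu(f)$, since cancellation between $f^+$ and $f^-$ in the graded ring can strictly lower their values, so the parity of $\nu(f)$ cannot be read off from ${\rm gr}_\nu$ directly; the polynomial expansion of Section~\ref{Poly} sidesteps this by giving a canonical decomposition whose $\nu$-value is computable term by term.
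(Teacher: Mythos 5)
Your proof is correct and takes essentially the same route as the paper: both rely on the unique expansion (\ref{eqZ61}) of $f$ in the monomials $M_i$, the value formula (\ref{eqZ62}), and the observation that $x^kM_i$ is a $\mathfrak k$-linear combination of monomials of total degree $\equiv k+\pi_i\pmod{2}$. The paper phrases the membership criterion for $A$ as reading off the expansion in decreasing order of $\deg_y$, which is exactly the $\mathfrak k[x]$-linear independence of the $M_i$ that you make explicit, and the passage from the parity criterion to the semigroup identity via (\ref{eqZ62}) is the same in both.
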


\begin{proof} For $f\in \mathfrak k[x,y]$, let $t=\mbox{deg}_y(f)$. By (\ref{eqZ61}), $f$ has a unique expansion 
$$
f=\sum_{i=0}^t(\sum_k b_{k,i}x^k)P_1^{j_1(i)}\cdots P_r^{j_r(i)}
$$
where $b_{k,i}\in \mathfrak k$, $0\le j_k(i)<\overline n_k$ for $1\le k$ and 
$$
\mbox{deg}_yP_1^{j_1(i)}\cdots P_r^{j_r(i)}=i
$$
for all $i$. Looking first at the $t=\mbox{deg}_y(f)$ term, and then at lower order terms, we see that
$f\in \mathfrak k[x^2,xy,y^2]$ if and only if $k+j_1(i)+\cdots +j_r(i)\equiv 0 \mbox{ mod }2$ whenever $b_{k,i}\ne 0$.
\end{proof}

\begin{Example}\label{Example1} Suppose that $\mathfrak k$ is a field and $R$ is the localization of  $\mathfrak k[u,v,w]/uv-w^2$ at the maximal ideal $(u,v,w)$.  Then there exists a rational nondiscrete valuation $\nu$ dominating $R$ such that if
$$
\gamma_0<\gamma_1<\cdots
$$
is the increasing sequence of minimal generators of the semigroup $S^R(\nu)$,
then given $n\in\ZZ_+$, there exists $i>n$ such that $\gamma_{i+1}=\gamma_i+\frac{\gamma_0}{3}$ and
$\gamma_{i+1}$ is in the group generated by $\gamma_0,\ldots,\gamma_i$.
\end{Example}

\begin{proof} Let $A=\mathfrak k[x,y]$ be a polynomial ring with maximal ideal
$\mathfrak n=(x,y)\mathfrak k[x,y]$. We will use the criterion of Theorem \ref{Corollary4*} to construct a  rational nondiscrete valuation $\nu$ dominating $T=A_{\mathfrak n}$, with a generating sequence 
$$
P_0=x, P_0=y, P_2,\ldots
$$
such that 
$$
\beta_0=\nu(x), \beta_1=\nu(y), \beta_2=\nu(P_2),\ldots
$$
is the increasing set of minimal generators of the semigroup $S^{T}(\nu)$.
We will construct the $P_i$ so that each $P_i$ is a $\mathfrak k$-linear combination of
monomials in $x$ and $y$ of odd degree.

We define the first part of a generating sequence by setting
$$
P_0=x, P_1=y, P_2=y^3-x^5,
$$
with  $\beta_0=\nu(x)=1$, $\beta_1=\nu(y)=\frac{5}{3}$. Set $b_1=0$.

We now inductively define 
$$
P_{i+1}=P_i^3-x^{a_i}P_{i-1}
$$
with $a_i$ an even positive integer, and $\beta_i=\nu(P_i)=b_i+\frac{5}{3^i}$ with $b_i\in \ZZ_+$, for $i\ge 2$, by requiring that 3 divides $a_i+b_{i-1}$ and
$$
b_i=\frac{a_i+b_{i-1}}{3}>3b_{i-1}+5
$$
for $i\ge 2$. $a_i,b_i$ satisfying these relations can be constructed inductively from $b_{i-1}$.

Now let $B=\mathfrak k[x^2,xy,y^2]$, $\mathfrak m=(x^2,xy,y^2)B$, so that
$R\cong B_{\mathfrak m}$. With this identification, the semigroup 
$S^{R}(\nu)$ is generated by $\{\beta_i+\beta_j\mid i,j\in\NN\}$. From $3\beta_i<\beta_{i+1}$ for $i\ge 1$ and
$\beta_i<\beta_j$ if $j>i$, we conclude that if $i\le j$, $k\le l$ and $j< l$, then
\begin{equation}\label{eq10}
\beta_i+\beta_j< \beta_k+\beta_l.
\end{equation}

Let 
$$
\gamma_0=2<\gamma_1<\cdots
$$
be the sequence of minimal generators of the semigroup $S^{R}(\nu)$. By (\ref{eq10}), for $n\in\ZZ_+$, there
exists an index $l$ such that $\gamma_l=\beta_0+\beta_n$. We have $l\ge n$.
The semigroup $S(\gamma_0,\gamma_1,\ldots,\gamma_l)$  is generated by 
$$
\{\beta_i+\beta_j\mid i\le j\mbox{ and }j\le n-1\}
$$
and $\beta_0+\beta_n$.  

Suppose $\beta_1+\beta_n\in S(\gamma_0,\gamma_1,\ldots,\gamma_l)$.
Since $S(\gamma_0,\ldots, \gamma_{l-1})\subset \frac{1}{3^{n-1}}\ZZ$,
we have an expression
$\beta_1+\beta_n=r\gamma_l+\tau$ with $r$ a positive integer, and $\tau\in S(\gamma_0,\ldots,\gamma_{l-1})$. Now
$$
\gamma_l=\beta_0+\beta_n=1+b_n+\frac{5}{3^n}
$$
and
$$
\beta_1+\beta_n=\frac{5}{3}+b_n+\frac{5}{3^n}
$$
implies $\tau\le \frac{5}{3}-1=\frac{2}{3}$,
which is impossible, since $\gamma_0=\beta_0+\beta_0=2$. Thus $\beta_1+\beta_n\not\in S(\gamma_0,\gamma_1,\ldots,\gamma_l)$ and $\beta_1+\beta_n=\gamma_{l+1}$ is the next largest minimal generator of $S^{R}(\nu)$.

We have that $\gamma_{l+1}=\beta_1+\beta_n=(\beta_0+\beta_1)+(\beta_0+\beta_n)-2\beta_0$ is in the group generated by
$\gamma_0,\ldots,\gamma_l$.
\end{proof}

\begin{Example}\label{Example2} Let notation be as in Example \ref{Example1} and its proof. Then $R\rightarrow T$ is finite, but $S^{T}(\nu)$ is not a finitely generated $S^{R}(\nu)$ module.
\end{Example}

\begin{proof} Suppose $S^{T}(\nu)$ is a finitely generated $S^{R}(\nu)$ module. 
Then there exists $n>0$ such that $S^{T}(\nu)$ is generated by $\beta_0,\ldots,\beta_n$ and
$\{\beta_i+\beta_j\}\mid i,j\in\NN\}$. For $l>n$, $\beta_l$ cannot be in this semigroup.
\end{proof}

\begin{Example}\label{Example3} Let $A=\mathfrak k[u,v]_{(u,v)}$. Then $A\rightarrow T$ is a finite extension of regular local rings, but $S^{T}(\nu)$ is not
a finitely generated $S^{A}(\nu)$ module.
\end{Example}

\begin{proof} Since $A$ is a subring of $R$, $S^{A}(\nu)$ is a subsemigroup of $S^{R}(\nu)$. Since $S^{T}(\nu)$ is not a finitely generated $S^{R}(\nu)$-module, by Example \ref{Example2}, $S^{T}(\nu)$ cannot be a finitely generated $S^{A}(\nu)$-module.
\end{proof}

\end{document}